\newfont{\sheaf}{eusm10 scaled\magstep1}
\newcommand{\ra}{\ensuremath{\rightarrow}}
\def\eea{\end{eqnarray*}}
\def\bea{\begin{eqnarray*}}
\def\de{{\delta}}
\newcommand{\Proof}{{\it Proof. }}
\newcommand{\QED}{{\hfill $Q.E.D.$}}
\newtheorem{teo}{Theorem}[section]
\newtheorem{df}[teo]{Definition}
\newtheorem{lem}[teo]{Lemma}
\newtheorem{cor}[teo]{Corollary}
\newtheorem*{cor*}{Corollary}
\newtheorem{oss}[teo]{Remark}
\newtheorem{rem}[teo]{Remark}
\newtheorem{prop}[teo]{Proposition}
\newcommand{\bbA}{\ensuremath{\mathbb{A}}}
\newcommand{\C}{\ensuremath{\mathbb{C}}}
\newcommand{\R}{\ensuremath{\mathbb{R}}}
\newcommand{\Z}{\ensuremath{\mathbb{Z}}}
\newcommand{\F}{\ensuremath{\mathbb{F}}}
\newcommand{\HH}{\ensuremath{\mathbb{H}}}
\newcommand{\PP}{\ensuremath{\mathbb{P}}}
\newcommand{\SSS}{\ensuremath{\mathcal{S}}}
\newcommand{\PPP}{\ensuremath{\mathcal{P}}}
\newcommand{\KKK}{\ensuremath{\mathcal{K}}}
\begin{document}

\title[Real singular Del Pezzo surfaces]{Real singular Del Pezzo surfaces and
3-folds fibred by  rational curves, II}

\author{Fabrizio Catanese\\
    Universit\"at Bayreuth \\ Frederic Mangolte\\ Universit\'e de Savoie }

\thanks{ The research of the  authors was  supported by the
D.F.G.-FORSCHERGRUPPE 790
"Classification of algebraic surfaces and compact
   complex manifolds" and the ANR grant "JCLAMA" of Agence Nationale de
la Recherche. We profited of the hospitality of the Centro `De Giorgi' of the
Scuola Normale di Pisa in order to meet and prepare the final version
of the article.}

\date{November 5, 2008}

\maketitle

\begin{quote}\small
\textit{MSC 2000:} 14P25, 14M20, 14J26.
\par\medskip\noindent
\textit{Keywords:} del Pezzo surface, rationally connected algebraic
variety, Seifert
manifold, Du Val surface
\end{quote}

\begin{abstract} Let $W \to X$ be a real smooth projective 3-fold
fibred by rational
curves. J.~Koll\'ar proved that, if $W(\R)$ is orientable, then a
connected component
$N$ of $W(\R)$ is essentially  either a Seifert fibred  manifold or a
connected sum of
lens spaces.

Our Main Theorem,  answering in the affirmative three questions  of
Koll\'ar, gives sharp estimates on the number and the multiplicities of the
Seifert fibres and on the number and the torsions of the lens spaces
when $X$ is a
geometrically rational surface.

When $N$ is Seifert fibred
   over a base orbifold $F$, our result  generalizes
 Comessatti's theorem on smooth real rational surfaces: $F$ cannot be
simultaneously orientable and of hyperbolic type.

We show as a surprise  that,  unlike in Comessatti's theorem,
 there are examples where $F$
is non orientable, of hyperbolic type, and $X$ is minimal. 

The technique we use is to construct Seifert fibrations as projectivized
tangent bundles of Du Val surfaces.

\end{abstract}

\medskip
\renewcommand{\abstractname}{Titre et r\'esum\'e en fran\c{c}ais}
\begin{abstract} 

\textbf{Surfaces de Del Pezzo singuli\`eres r\'eelles et vari\'et\'es de dimension 3 munies d'une fibration en courbes rationnelles}

Soit $W \to X$ une vari\'et\'e projective r\'eelle non singuli\`ere munie d'une fibration en courbes rationnelles.  J.~Koll\'ar a montr\'e que si $W(\R)$ est orientable, alors une composante connexe $N$ de $W(\R)$ est essentiellement une vari\'et\'e de Seifert ou une somme connexe d'espaces lenticulaires.

Notre th\'eor\`eme principal, r\'epondant affirmativement \`a trois questions de Koll\'ar, donne une estimation optimale du nombre et des multiplicit\'es des fibres de Seifert  et du nombre et des torsions des espaces lenticulaires lorsque $X$ est une surface g\'eom\'etriquement rationnelle.

Lorsque $N$ admet une fibration de Seifert au dessus d'un orbifold $F$, nos r\'esultats  g\'en\'eralisent le th\'eor\`eme de Comessatti sur les surfaces rationnelles r\'eelles lisses~:
$F$ ne peut pas \^etre \`a la fois orientable et de type hyperbolique.

Nous montrons, ce qui est une surprise, que contrairement \`a ce qui se produit pour le th\'eor\`eme de Comessatti, il existe des exemples  o\`u $F$ est non orientable, de type hyperbolique, et $X$ est minimale.  
La technique employ\'ee est la construction d'une fibration de Seifert comme espace tangent projectivis\'e d'une surface Du Val. 

 \end{abstract}

\tableofcontents

\section*{INTRODUCTION}

Given a smooth real projective variety $W$ of dimension $n$, we
consider the topology
of a connected component $N$ of the set $W(\R)$ of its real points.

John Nash  proved in \cite{nash} that any compact connected
differentiable manifold $N$ is
obtained in this way, and went over to ask whether the same would hold if one
assumes $W$ to be geometrically rational.

However, when $W$ is a surface of negative Kodaira dimension, one is
able, after the
work of Comessatti
   \cite{Co14} for geometrically rational surfaces, to deduce drastical
restrictions for
the topology of $N$. Namely, if
$N$ is orientable, then it is diffeomorphic to a sphere or to a
torus: in other words,
$N$ cannot be
simultaneously oriented and
   of hyperbolic type.
In this note, we make a step towards a complete classification of the
topological types
for $N$ when
$W$ is a rationally connected 3-fold fibred by rational curves
(this is one of the
higher dimensional analogues of Comessatti's theorem).

This study was initiated by J\'anos Koll\'ar, in the third paper  \cite{KoIII}
of a ground-breaking series of articles applying the minimal model
program to the
study of the topology of real algebraic 3-folds.

 Koll\'ar's philosophy is that    a very important  condition  in order
to obtain restrictions upon the topological type of $W(\R)$
is that $W$ has terminal singularities and $K_W$  is Cartier along
$W(\R)$.

 Koll\'ar  proved in particular that if $W$ is a smooth
  3-fold  fibred by rational curves (in particular, $W$ has
negative Kodaira
dimension) and such that $W(\R)$ is orientable, then
 a connected component $N$ of $W(\R)$  is
essentially a Seifert
fibred 3-manifold or the connected sum of a finite number of lens
spaces. Note that in
\cite{hm1,hm2} it was shown that conversely all the above manifolds $N$  do
occur for some smooth 3-fold  $W$ fibred by rational curves. 

When
$W$ belongs to the subclass of rationally connected 3-folds fibred
by rational
curves, Koll\'ar proved some additional restrictions upon $N$ and
made three further
conjectures. In our first note
\cite{CM08} we proved two of the optimal estimates that Koll\'ar
conjectured to hold.
In the present note we prove the third estimate, which is the most
important one since it
allows us to conclude in particular that, if $N$ is a Seifert fibred
3-manifold, then
the base orbifold cannot be simultaneously oriented and
   of hyperbolic type.

Let us now introduce our results in more detail.  

Let $N$ be an oriented three
dimensional compact connected topological manifold without boundary. Take a
decomposition $N = N'
\#^a\mathbb{P}^3(\mathbb{R})\#^b(S^1\times S^2)$ with $a + b$ maximal
and observe that
this decomposition is unique by a theorem of Milnor \cite{Mil62}.

We shall focus our
attention on  the case where $N'$ is Seifert fibred or a connected sum of lens
spaces. We consider the  integers
$k : = k (N)$ and
$n_l := n_l(N)$,
$l = 1\dots k$  defined as follows:
\begin{enumerate}
\item if  $g \colon N' \to F$ is a Seifert fibration,  $k$ denotes
the number of
multiple fibres of
    $g$ and $n_1 \leq n_2 \leq \dots \leq  n_k$ denote the respective
multiplicities;
\item if $N'$ is a connected sum of lens spaces,  $k$ denotes the
number of lens spaces
and $n_1 \leq n_2 \leq \dots \leq n_k$, $ n_l \geq 3, \  \forall l$,
the orders of
the respective fundamental groups (thus we have a decomposition
$N' =
\#_{l=1}^k ( L (n_l, q_l)$ for some $1 < q_l < n_l$ relatively prime
to $n_l$).
\end{enumerate}

Observe that when $N'$ is a connected sum of lens spaces, the number
$k$ and the
numbers $n_l $, $l = 1, \dots k$ are well defined (again by Milnor's theorem).
In the case of a Seifert fibred manifold $N'$, these integers may a
priori depend upon
the choice of a Seifert fibration.

Three  results of our two notes are summarized by the following.

\begin{teo}\label{teo:multiple} Let $ W \to X$ be a real smooth projective
3-fold fibred by rational curves over
a geometrically rational \footnote{By \cite{ghs} these assumptions are
equivalent to:
$W$ rationally connected and fibred by rational curves.}
  surface
$X$. Suppose that
$W(\R)$ is orientable. Then, for each connected component $N
\subset W(\R)$, $k(N)  \leq 4$ and
$\sum_l (1-\frac 1{n_l(N)})\leq 2$. Furthermore, if $N'$ is Seifert fibred over
$S^1\times S^1$, then $k(N)=0$.
\end{teo}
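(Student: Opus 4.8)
This statement collects the three estimates of our two notes. The bound $k(N)\le 4$ and the vanishing $k(N)=0$ when $N'$ is Seifert fibred over $S^1\times S^1$ are proved in our first note \cite{CM08}; the remaining estimate, proved here, is the inequality $\sum_l(1-\frac{1}{n_l(N)})\le 2$, which is the essential point, and I sketch the plan for it (our method also reproves the torus statement).

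The strategy is to transport the problem, through Koll\'ar's structure theory for rationally connected real $3$-folds fibred by rational curves, into a question about the singular points of a real Du Val Del Pezzo surface. First I would recall the reduction: running the real minimal model program on $W\to X$ and using that $W(\R)$ is orientable together with $X$ geometrically rational, one replaces $W$ by a standard model without altering $W(\R)$ beyond the connected sums with $\PP^3(\R)$ and $S^1\times S^2$ already split off in passing from $N$ to $N'$; in this model a component $N'$ is, up to the relevant diffeomorphism, obtained from the real locus of the projectivised tangent bundle $\PP(T_Y)$ of a real Du Val Del Pezzo surface $Y$ with $1\le K_Y^2\le 9$, the rational-curve fibration being $\PP(T_Y)\to Y$. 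Over $Y(\R)\setminus\mathrm{Sing}(Y)$ this is the unit tangent circle bundle, contributing neither multiple fibres nor lens-space summands; the multiple fibres of $g\colon N'\to F$, respectively the lens spaces $L(n_l,q_l)$, arise exactly over the real singular points of $Y$, and the invariant $n_l$ is read off from the local analytic type of the singularity together with its real structure.

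Granting this dictionary, the inequality becomes a bound on the singular points of a real Du Val Del Pezzo surface, which I would obtain on the minimal resolution $\pi\colon\widetilde Y\to Y$. It is a weak Del Pezzo surface of degree $d=K_Y^2$, so $\mathrm{Pic}(\widetilde Y)\cong\Z^{1,9-d}$, the $(-2)$-curves contracted by $\pi$ span a negative definite root sublattice $R\subset K_{\widetilde Y}^{\perp}$ of rank at most $\min(8,9-d)$, and $R$ is a direct sum of ADE summands, one per singular point, a point of type $A_m$ using up $m$ of the available ranks and contributing $1-\frac{1}{m+1}$ to $\sum_l(1-\frac{1}{n_l})$. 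The rank count alone is \emph{not} enough — it yields only $\sum_l(1-\frac{1}{n_l})\le 4$, attained lattice-theoretically by the configuration $8A_1$ — so the crux is to feed in the real hypotheses: orientability of $W(\R)$ forces the contributing singular points to be of a restricted real type (essentially $A_{2k-1}$ with a prescribed action of complex conjugation on the chain of $(-2)$-curves) and constrains the real structure of $Y$ in the sense of Comessatti's theorem applied to the base surface. Carrying this bookkeeping through, point by point, over the finitely many deformation classes of real Du Val Del Pezzo surfaces of each degree $1\le d\le 9$, is what brings the constant down to $2$, with equality pinned to the extremal configurations; and when a component of $Y(\R)$ is a torus, Comessatti's theorem forces $Y$ into a shape carrying no real singular point on it, so $k(N)=0$, which is the torus statement.

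The main obstacle lies precisely in this last step, in two guises. First, the local dictionary must be made \emph{precise and uniform}: distinct real forms of one complex ADE singularity produce distinct $n_l$, or contribute nothing, and pinning this down requires an explicit description of $\PP(T_{\widetilde Y})$ restricted to the chain of real $(-2)$-curves, a chain of real Hirzebruch surfaces. Second, and harder, one must extract the \emph{sharp} constant $2$ rather than the soft lattice bound $4$: this is where orientability of $W(\R)$ and the ampleness of $-K_Y$ have to be used together, which in practice forces a descent to the classification of real Del Pezzo surfaces of small degree and a check of the extremal cases. Everything else — the minimal model reduction and the root-lattice combinatorics — is comparatively routine once the dictionary is in place.
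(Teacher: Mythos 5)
Your overall plan is correct only at the coarsest level (reduce via Koll\'ar's minimal model program to a statement about singular points of a real Du Val surface), but it departs from the paper's argument in several places where the departures are not merely stylistic but actually break the proof.

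First, the dictionary you propose is not the one that holds. In the reduction used in the paper (Lemma~\ref{lem:question2}, drawing on \cite{KoIII} and \cite{CM08}), the model one lands on is a rational curve fibration $f'\colon W'\to X'$ where $W'$ has terminal singularities with $K_{W'}$ Cartier along $W'(\R)$ and $X'$ is Du Val; it is \emph{not} in general the projectivised tangent bundle $\PP(T_{X'})$. The projectivised tangent bundle construction appears in this paper only in Section~\ref{orbi}, as a device to \emph{build} the example of Theorem~\ref{hyperbolic}, not as an identification of the reduced model. Second, the singularities that produce multiple fibres are those of type $A^+_\mu$ (globally separating when $\mu$ is odd), with multiplicity $\mu+1$; they are \emph{not} restricted to odd-index chains $A_{2k-1}$. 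The case $A_2^+$ (multiplicity $3$) is not only allowed, it is exactly what occurs in the hyperbolic example of Theorem~\ref{hyperbolic}, so a dictionary excluding even-index types cannot be right.

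Third, and most importantly, the root-lattice rank count you propose is a global constraint on $\sum\mu_l$ over \emph{all} singular points of the surface, whereas Theorem~\ref{teo:main} asserts a bound on $\sum(1-\frac{1}{\mu_l+1})$ over the singular points lying on a \emph{single} connected component $M$ of the topological normalization $\overline{X(\R)}$. The rank bound $\sum\mu_l\le 8$ allows configurations such as $2A_3+A_2$ or $A_3+2A_2$ on one component, both of which violate $\sum(1-\frac{1}{\mu_l+1})\le 2$; these are precisely among the seven configurations the paper must and does exclude (Section~\ref{sec:sevenconfig}). Your sketch gives no mechanism for this per-component localisation, and "feeding in orientability and ampleness of $-K$" is not a substitute for one. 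The paper's actual exclusion argument is neither a lattice computation nor a finite check over deformation classes across all degrees $1\le d\le 9$: it first reduces to degree $1$ and small real Picard number via \cite[Prop.\ 2.4 and Lemma 1.8]{CM08}, then works with the anticanonical double cover of the quadric cone, the generalized Brusotti theorem on independent smoothings (Theorem~\ref{teo:generalizedbrusotti}), the topological classification of smooth real degree-$1$ Del Pezzo surfaces (Proposition~\ref{sec:dps1}), a table of local contributions to the discriminant multiplicity and to the Euler number of the real elliptic fibration (Table~\ref{tab:euler}), and the Comessatti characteristic (Lemma~\ref{lem:rho-euler}). Calling this step "comparatively routine once the dictionary is in place" misjudges where the difficulty of the theorem actually lies: Sections~\ref{sec:config}--\ref{sec:lasteuler}, which carry out these exclusions, are the core of the paper.

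===================================================================

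Theorem~\ref{teo:multiple} follows from \cite[Corollary~0.2, and Theorem~0.3]{CM08} and from Theorem~\ref{teo:main} of the present paper using results of \cite{KoIII} as in \cite{CM08}. The
present note is mainly devoted to the proof of  the inequality
$\sum_l (1-\frac 1{n_l(N)})\leq 2$, see
Lemma~\ref{lem:question2}.

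The proof of this inequality goes as follows: let $W \to
X$ be a real
smooth projective
3-fold fibred by rational curves over a geometrically rational
surface $X$. Using the
same arguments as in \cite[Sec.~3]{CM08}, we reduce the proof of the
estimate for the
integers $n_l(N)$ to an inequality depending on the indices of
certain singular points
of a real component $M$ of the topological normalization of $X(\R)$ (see
Definition~\ref{def:norm}). In this process, the number $k(N)$ can be
made to correspond to the number of real singular points on $M$ which
are of type
$A^+_\mu$, and globally separating when $\mu$ is odd; each number $n_l(N)-1$
corresponds to the index $\mu_l$ of the singularity $A^+_{\mu_l}$ of
$M$.
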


This theorem answers, as we already said, some questions posed by Koll\'ar, see
\cite[Remark~1.2~(1,2,3)]{KoIII}.   In the first note, we proved the
estimate $k(N)
\leq 4$ and we showed that $k(N)=0$ if $N'$ is Seifert fibred over
the torus. Thus
Theorem~\ref{teo:multiple} follows from \cite[Corollary~0.2, and Theorem~0.3]{CM08} and from Theorem~\ref{teo:main} of the present paper using results of \cite{KoIII} as in \cite{CM08}. The
present note is mainly devoted to the proof of  the inequality
$\sum_l (1-\frac 1{n_l(N)})\leq 2$, see
Lemma~\ref{lem:question2}. 

The proof of this inequality goes as follows: let $W \to
X$ be a real
smooth projective
3-fold fibred by rational curves over a geometrically rational
surface $X$. Using the
same arguments as in \cite[Sec.~3]{CM08}, we reduce the proof of the
estimate for the
integers $n_l(N)$ to an inequality depending on the indices of
certain singular points
of a real component $M$ of the topological normalization of $X(\R)$ (see
Definition~\ref{def:norm}). In this process, the number $k(N)$ can be
made to correspond to the number of real singular points on $M$ which
are of type
$A^+_\mu$, and globally separating when $\mu$ is odd; each number $n_l(N)-1$
corresponds to the index $\mu_l$ of the singularity $A^+_{\mu_l}$ of
$M$. The main
part of the paper is devoted to the proof of the following.

\begin{teo}\label{teo:main} Let $X$ be a projective surface defined
over $\R$. Suppose
that $X$ is geometrically rational with Du Val singularities. Then a connected
component $M$ of the topological normalization $\overline {X(\R)}$
contains at most 4
singular points $x_l$ of type $A^+_{\mu_l}$  which are  globally
separating for $\mu_l$
odd. Furthermore, their indices
   satisfy
$$
\sum (1- \frac1{\mu_l +1} )\leq 2\;.
$$
\end{teo}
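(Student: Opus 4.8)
The plan is to reduce the statement to a Noether-type inequality for the number of $(-2)$-curves that must be blown up to resolve the Du Val singularities lying on the real component $M$, and then to exploit the geometric rationality of $X$ via the classification of real minimal rational surfaces. First I would pass to the minimal resolution $\pi\colon \tilde X\to X$, which is again a geometrically rational smooth projective surface over $\R$; each Du Val singular point $x_l$ of type $A^+_{\mu_l}$ is replaced by a chain of $\mu_l$ rational $(-2)$-curves, and the real locus behaves as analyzed in \cite{CM08}: a singular point of type $A^+_{\mu_l}$ that is globally separating (when $\mu_l$ is odd) forces the corresponding chain, together with its complex conjugate, to contribute a configuration of real or conjugate-pair curves whose contraction changes the topology of $M$ in a controlled way. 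The key numerical input is that resolving an $A^+_{\mu}$ point and then running the real minimal model program costs at least $\mu$ in terms of the drop of an appropriate invariant — I would use the "defect" bookkeeping of \cite{CM08}, Section 3, which already packages the local contribution of such a point as $1-\frac{1}{\mu+1}$.

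Next I would set up the global inequality. On a geometrically rational real surface, after contracting the relevant real $(-1)$-curves and conjugate pairs one arrives at a real minimal rational surface: $\P^2_\R$, a real form of a quadric, a real conic bundle, or a Del Pezzo surface of low degree — here the list from Comessatti's theory, as used in \cite{KoIII} and \cite{CM08}, applies. The orientability hypothesis on $W(\R)$, transported to $X$ via the fibration, restricts $M$ to be a component on which the base orbifold cannot be orientable and hyperbolic; concretely this bounds the sum of the "orbifold defects" $\sum_l (1-\frac{1}{\mu_l+1})$ by the orbifold Euler-characteristic budget of a sphere or disk, which is $2$. The bound $k\le 4$ then follows because each term $1-\frac{1}{\mu_l+1}$ is at least $1-\frac12=\frac12$ when $\mu_l\ge 1$, so at most four such terms can sum to $\le 2$; one must still check that the extremal configurations (e.g. four $A^+_1$ points, or the $(2,3,6)$- and $(2,4,4)$-type distributions) are the only ways to reach the bound, which is exactly the content matching the three Koll\'ar questions.

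The main obstacle, I expect, is the passage from the \emph{local} index data $\mu_l$ to a \emph{global} inequality valid for \emph{every} component $M$ of $\overline{X(\R)}$ simultaneously — i.e. controlling the interaction between several singular points possibly lying on the same component and the minimal model program steps, which can move or merge exceptional configurations. One has to argue that the minimal model program can be run in a way that is "monotone" with respect to the defect $\sum_l(1-\frac{1}{\mu_l+1})$ attached to $M$: contractions of real $(-1)$-curves disjoint from the $A^+$ chains do not increase it, contractions meeting a chain only help, and one never creates new globally separating $A^+$ points with positive defect out of nothing. Once this monotonicity is in place, the endpoint of the program is a real minimal geometrically rational surface whose real locus, by the Comessatti-type classification, carries total defect at most $2$ on each component; pulling back gives the theorem. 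A secondary technical point is the correct treatment of the "globally separating for $\mu_l$ odd" condition — this is what makes the relevant chains behave like the multiple-fibre data of a Seifert fibration on the $\P^1$-bundle side, and it must be tracked carefully through the resolution and the contractions, but it does not change the shape of the numerical estimate.
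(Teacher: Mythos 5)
Your proposal has a genuine gap at its central step. First, a point of framing: Theorem~\ref{teo:main} is a statement purely about a singular surface $X$; there is no $3$-fold $W$ and no orientability hypothesis in it, so the appeal to ``orientability of $W(\R)$ transported to $X$'' has nothing to attach to. More seriously, the proposed endgame — run the real MMP to a \emph{smooth} minimal rational surface and invoke Comessatti to get the budget of $2$ — cannot work as stated, because the MMP for Du Val surfaces terminates at a \emph{singular} Du Val Del Pezzo surface (or conic bundle), not at a smooth minimal model; the singular points of type $A^+_{\mu}$ and their indices are part of the data that must survive the reduction, and there is no Comessatti-type theorem that directly bounds the defect $\sum(1-\frac{1}{\mu_l+1})$ on a component of a \emph{singular} Du Val Del Pezzo surface. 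The quantity that Koll\'ar proves invariant under extremal birational contractions is only the \emph{count} $m_i$ of relevant singular points on each component (\cite[Cor.~9.7]{KoIII}); no monotonicity is available for the weighted sum $\sum(1-\frac1{\mu_l+1})$, and your proposal supplies none. The paper's own reduction (following \cite[Prop.~2.4 and Lemma~1.8]{CM08}) does lead to a Du Val Del Pezzo of degree $1$ with $\rho\le 2$, but then the inequality must be proven there from scratch, which is the content of Sections~\ref{sec:sevenconfig}--\ref{sec:lasteuler}: representing $X$ as a double cover of the quadric cone branched over a trisection $B$, enumerating seven possible offending configurations, and excluding them one by one via Brusotti-type smoothings, the topological classification of smooth real degree-$1$ Del Pezzo surfaces, Euler-number bookkeeping for the associated real elliptic fibration, the Comessatti characteristic $2\lambda=b_*(S(\C))-b_*(S(\R))$, and some ad hoc geometry of the branch curve (e.g.\ excluding four real cusps). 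Your sketch has no substitute for this case analysis.

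A second, smaller issue: your heuristic that the bound comes from ``the orbifold Euler-characteristic budget of a sphere or disk'' is not what the inequality says. The component $M$ can be non-orientable; indeed the paper exhibits a minimal Du Val Del Pezzo of degree $1$ whose real part contains a component homeomorphic to $\PP^2(\R)$ with two $A_2^+$ points (Theorem~\ref{hyperbolic}), giving $\sum(1-\frac1{\mu_l+1})=\frac43$, which is $\le 2$ even though the corresponding orbifold has $\chi_{top}(F)=1-\frac43<0$ and is hyperbolic. So $\sum\le 2$ is \emph{not} equivalent to $\chi_{top}(F)\ge 0$ for a sphere or disk — it only becomes so under the extra hypothesis that $|F|$ is orientable (Proposition~\ref{proporbi}). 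Finally, your observation that $k\le 4$ follows arithmetically from $\sum\le 2$ because each summand is $\ge\frac12$ is correct, but it is the easy direction; the real work is $\sum\le 2$ itself.
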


Let us now  give an interpretation of the above results in
terms of Geometric
Topology (see e.g.
\cite{Scott} for the basic definitions and classical results).
Suppose that $N'$ admits a Seifert
fibration with base orbifold $F$.
  From our main theorem \ref{teo:multiple} we infer that, if
the underlying manifold $\vert
F\vert$ is orientable, then the Euler characteristic of the
compact 2-dimensional orbifold $F$ is
nonnegative (see Proposition~\ref{proporbi}).
Thus, by the uniformization theorem for compact
2-dimensional orbifolds, $F$ admits a spherical structure or an
euclidean structure.

In general, a 3-manifold $N$ does not possess a geometric structure,
but, if it does, then the geometry
involved is unique.
Moreover, it turns out that every
Seifert fibred manifold admits a geometric structure.  The
geometry of $N$ is modeled on one of the six following models (see
\cite{Scott} for a detailed description of each geometry):
$$ S^3,S^2\times\R,E^3,\operatorname{Nil},\HH^2\times\R,
\widetilde{\operatorname{SL}_2\R},
$$ where $E^3$ is the $3$-dimensional euclidean space and $\HH^2$ is the
hyperbolic plane. The six
   above geometries are called the Seifert geometries.
The appropriate geometry for a Seifert
fibration is determined by  the Euler characteristic of the base orbifold
   and by the Euler
number of the Seifert bundle~\cite[Table~4.1]{Scott}.

Let $W$ be a real projective 3-fold  fibred by rational curves
and such that $W(\R)$ is
orientable, let $N\subset W(\R)$ be a connected component and let $N'$ be
the manifold defined as
above. Suppose moreover that $N'$ possesses a geometric structure.
By Theorem~\cite[Th.~1.1]{KoIII}, the geometry of $N'$ is one of the
six Seifert
   geometries.
Conversely, by \cite{hm1}, any orientable three dimensional manifold
endowed with any Seifert
geometry is diffeomorphic to a real component of a real projective
3-fold  fibred by rational
curves. But, when $W$ is rationally connected, the following
   corollary of our main theorem gives further restrictions.

\begin{cor}\label{cor:orbi} Let $W$ be a real smooth projective
rationally connected
3-fold fibred by rational curves. Suppose that $W(\R)$ is orientable
   and let $N$ be a connected
component of $W(\R)$. Then neither $N$ nor $N'$ can be endowed with a
$\widetilde{\operatorname{SL}_2\R}$ structure or with a $\HH^2\times
\R$ structure
whose base orbifold $F$  is orientable.
\end{cor}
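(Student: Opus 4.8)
The plan is to deduce Corollary~\ref{cor:orbi} from Proposition~\ref{proporbi} (hence, ultimately, from Theorem~\ref{teo:main}) by translating each of the two forbidden geometric structures into a Seifert fibration $N'\to F$ over an \emph{orientable} base orbifold of hyperbolic type, and then quoting Proposition~\ref{proporbi}: for a rationally connected $W$ fibred by rational curves with $W(\R)$ orientable, every Seifert fibration $N'\to F$ with $|F|$ orientable has $\chi^{orb}(F)\geq 0$, which is incompatible with $F$ being of hyperbolic type.

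First I would reduce the assertion about $N$ to the same assertion about $N'$. A closed $3$-manifold carrying an $\HH^2\times\R$- or a $\widetilde{\operatorname{SL}_2\R}$-structure has universal cover diffeomorphic to $\R^3$, hence is aspherical; since $W(\R)$ is orientable such a manifold is moreover irreducible, so comparing with the decomposition $N=N'\#^a\mathbb{P}^3(\R)\#^b(S^1\times S^2)$, which is unique by Milnor's theorem, forces $a=b=0$, i.e.\ $N=N'$. Next, using \cite[Table~4.1]{Scott}, I would record that a manifold with $\HH^2\times\R$ geometry is Seifert fibred over a base orbifold $F$ with $\chi^{orb}(F)<0$ and Euler number $0$; if in addition $|F|$ is orientable, Proposition~\ref{proporbi} yields $\chi^{orb}(F)\geq 0$, a contradiction. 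Likewise a manifold with $\widetilde{\operatorname{SL}_2\R}$ geometry is Seifert fibred over some $F$ with $\chi^{orb}(F)<0$ and non-zero Euler number, and once one knows $|F|$ is orientable the same contradiction applies. This finishes the proof modulo that last point.

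The substance of the corollary lies entirely in Proposition~\ref{proporbi}, that is, in Theorems~\ref{teo:multiple} and \ref{teo:main}; inside the corollary the only delicate bookkeeping is the orientability of the base orbifold, and I expect the $\widetilde{\operatorname{SL}_2\R}$ case to be the main obstacle. For the $\HH^2\times\R$ geometry the orientability of $|F|$ is simply part of the hypothesis, and — this is the phenomenon advertised as a surprise in the abstract — a \emph{non}-orientable base orbifold genuinely occurs and cannot be excluded. For the $\widetilde{\operatorname{SL}_2\R}$ geometry one must rule out a non-orientable base; I would argue this either from Scott's description of that geometry and of oriented Seifert bundles (a non-zero Euler number is only available over an orientable base), or, should a non-orientable base be possible there in general, from the additional input — to be extracted from \cite{KoIII} along the lines of \cite{CM08} — that the Seifert fibrations arising on components of $W(\R)$ for rationally connected $W$ have vanishing Euler number, which excludes $\widetilde{\operatorname{SL}_2\R}$ outright. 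Combining this with Proposition~\ref{proporbi} then gives the stated dichotomy.
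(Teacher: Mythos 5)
Your main line of argument is correct and is essentially the paper's own proof: reduce to $N'$ (the paper invokes Milnor's uniqueness theorem where you invoke asphericity/irreducibility, which amounts to the same thing), observe via \cite[Table~4.1]{Scott} that both geometries force $\chi_{top}(F)<0$, and contradict Proposition~\ref{proporbi} (the paper's proof cites Theorem~\ref{teo:multiple} directly, but the genus~$\geq 2$ possibility for $\vert F\vert$ is only killed by the Comessatti input packaged in Proposition~\ref{proporbi}, so your phrasing is if anything the more complete one).

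The problem is your third paragraph. You have misread the scope of the hypothesis: the clause ``whose base orbifold $F$ is orientable'' governs \emph{both} geometries, not just $\HH^2\times\R$, so there is nothing further to prove in the $\widetilde{\operatorname{SL}_2\R}$ case --- orientability of $\vert F\vert$ is assumed there too, and your second paragraph already finishes the proof. Moreover, both fallback arguments you sketch for excluding a non-orientable base are false. A non-zero Euler number is perfectly available over a non-orientable base orbifold when the total space is orientable (e.g.\ the unit tangent bundle of a non-orientable hyperbolic surface, which carries $\widetilde{\operatorname{SL}_2\R}$ geometry); and the claim that the Seifert fibrations arising here have vanishing Euler number is contradicted by the paper's own Theorem~\ref{hyperbolic}, whose whole point is to produce, from a rationally connected $W$, a component Seifert fibred over the non-orientable hyperbolic orbifold $\PP^2(\R)(3,3)$ as a projectivized tangent bundle --- a fibration of $\widetilde{\operatorname{SL}_2\R}$ type. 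So $\widetilde{\operatorname{SL}_2\R}$ cannot be ``excluded outright''; only the orientable-base case is excluded, which is exactly what the statement asserts and what your second paragraph proves. Delete the third paragraph and the proof stands.
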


Observe moreover that in \cite{KoIII} all compact 3-manifolds
with $S^3$ or $E^3$ geometry,
and some manifolds with $\operatorname{Nil}$ geometry, are realized as a real
component of a real smooth projective rationally connected 3-fold
fibred by rational
curves.

There remains of course the question about what happens when
$N$ is Seifert fibred over a nonorientable orbifold $F$: is the
orbifold still not of hyperbolic type?
In the last section we show that the answer to this question
is negative. We produce indeed an
example of a smooth 3-fold $W$, fibred by
rational curves over a  Du Val Del Pezzo surface $X$, where
$W(\R)$ is  orientable,
and contains a connected component which is Seifert fibred over a
nonorientable base orbifold  of hyperbolic type.

The striking fact is here that $X$ is a real minimal surface: this contrasts
with Comessatti's theorem: since indeed a real minimal nonsingular geometrically rational
surface cannot have an component which is of hyperbolic type.

\begin{teo}\label{hyperbolic} There exists a minimal real Du Val Del Pezzo surface
$X$ of degree $1$ having exactly two singular points, of type $A_2^+$,
and such that the real part $X(\R)$ has a
connected component containing the two singular points and which is
homeomorphic to a  real projective plane.

Let $W'$ be the projectivized tangent bundle of $X$: then
$W'$ has terminal singularities, $ W' (\R)$ is contained 
in the smooth locus of $W'$, 
in particular if $W$ is obtained  resolving  the singular points of $W'$,
then $ W (\R) =  W' (\R)$.

Moreover $ W (\R)$ is  orientable and  contains a connected component $N$
which is  Seifert fibred over a non
orientable orbifold of hyperbolic type (the real projective plane with two
points of multiplicity 3).
\end{teo}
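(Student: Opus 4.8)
The plan is to exhibit $X$ explicitly as a Weierstrass model and then read off everything about $W'=\PP(T_X)$ from the quotient-singularity picture near the $A_2^+$ points.

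\emph{Construction of $X$.} I realize $X$ in the weighted projective space $\PP(1,1,2,3)$, with coordinates $x,y$ of weight $1$, $z$ of weight $2$, $w$ of weight $3$, as the sextic
\[
  w^2 \;=\; z^3 \;+\; c\,x^2y^2\,z \;+\; x^2y^2\bigl(a\,x^2 + e\,xy + f\,y^2\bigr),
\]
with all coefficients real, $c>0$, $a<0$, $f<0$ and $e$ generic. This is the relatively minimal Weierstrass model of a rational elliptic fibration $\widehat X\to\PP^1$, where $\widehat X$ is the blow-up of $X$ at the unique base point $P_\infty$ of $|-K_X|$; adjunction in $\PP(1,1,2,3)$ gives $-K_X=\hol_X(1)$ ample and $K_X^2=1$, so $X$ is a Du Val del Pezzo surface of degree $1$. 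Using Tate's algorithm I would check that for such coefficients the only singular fibres are two of Kodaira type $IV$ (over $[1:0]$ and $[0:1]$) and four of type $I_1$; since $c>0$ and $a,f<0$, the four $I_1$ fibres are not real. Hence $X$ has exactly two singular points $x_0,x_1$, both of type $A_2$, at $[1:0:0:0]$ and $[0:1:0:0]$. Writing the local equation at $x_0$ as $w^2 + |a|\,y^2 = z^3 + (\text{higher order})$ shows its real locus is a topological $2$-disc, so $x_0$ (and likewise $x_1$) is of type $A_2^+$, and $X$ is smooth along $X(\R)$ away from $x_0,x_1$.

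\emph{The real topology $X(\R)\cong\PP^2(\R)$, and minimality.} Consider $\widehat X(\R)\to\PP^1(\R)=S^1$. Because $c>0$ and $a,f<0$ the discriminant has constant sign on $S^1$ away from $[1:0],[0:1]$, so each smooth real fibre is a single oval $\cong S^1$; moreover in each type $IV$ fibre the two non-identity components are precisely the exceptional $(-2)$-curves of the $A_2^+$ point, hence a complex-conjugate pair, so the real locus of a type $IV$ fibre is again just the identity component, a circle. A local analysis at the triple point of a type $IV$ fibre shows that $\widehat X(\R)\to S^1$ is a topological $S^1$-bundle, hence $\widehat X(\R)$ is the torus or the Klein bottle. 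The section $\Sigma$ of $\widehat X\to\PP^1$ is real and $\Sigma(\R)$ meets every fibre once; since $-K_{\widehat X}$ is the fibre class and $w_1(\widehat X(\R))^2$ is the reduction mod $2$ of $K_{\widehat X}|_{\widehat X(\R)}$, it equals the mod-$2$ class of the oval, which pairs non-trivially with $[\Sigma(\R)]$, so $w_1(\widehat X(\R))\ne 0$ and $\widehat X(\R)$ is the Klein bottle. As $\widehat X\to X$ is the blow-up of the real smooth point $P_\infty$, $\widehat X(\R)=X(\R)\#\PP^2(\R)$; since the Klein bottle is $\PP^2(\R)\#\PP^2(\R)$ we conclude $X(\R)\cong\PP^2(\R)$, connected, hence containing both $x_0$ and $x_1$. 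Finally $X$ is minimal: being a real del Pezzo surface of degree $1$ it admits no birational extremal Mori contraction over $\R$, which I would check by verifying that the $\mathrm{Gal}(\C/\R)$-action on $\operatorname{Pic}(X_{\C})$ leaves no contractible configuration of $(-1)$-curves invariant (equivalently, by invoking the classification of minimal real del Pezzo surfaces).

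\emph{The $3$-fold $W'=\PP(T_X)$.} Over $X(\R)_{\mathrm{sm}}$, $W'$ is the $\PP^1$-bundle $\PP(T_{X_{\mathrm{sm}}})$; near a Du Val point $X$ is analytically $\C^2/\mu_3$ with $\mu_3$ acting by $(p,q)\mapsto(\zeta p,\zeta^{-1}q)$, and there $W'=(\C^2\times\PP^1)/\mu_3$, the $\mu_3$-action on the $\PP^1$-factor having exactly two fixed points, the two eigenlines of the induced action on the tangent plane. Over each such fixed point $W'$ acquires a cyclic quotient singularity of type $\frac13(1,1,2)$ (up to the usual identification of weights), which is terminal by the Reid--Tai criterion; elsewhere the action is free. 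Thus $W'$ has terminal singularities — exactly four quotient singularities of type $\frac13(1,1,2)$, two above each $x_i$. For the real structure: at the $A_2^+$ point the antiholomorphic involution lifts to $\C^2$ as $(p,q)\mapsto(\bar q,\bar p)$, which \emph{swaps} the two eigenlines; hence the two singular points of $W'$ over $x_0$ are exchanged by conjugation and are not real, and likewise over $x_1$. Therefore $W'(\R)$ lies in the smooth locus of $W'$, so a resolution $W\to W'$ that is an isomorphism over $W'_{\mathrm{sm}}$ restricts to a homeomorphism $W(\R)\xrightarrow{\ \sim\ }W'(\R)$. Orientability: over $X(\R)_{\mathrm{sm}}$ the space $W'(\R)$ is the projectivized real tangent bundle of the surface $X(\R)_{\mathrm{sm}}$, whose total space is orientable for every surface (transition maps act on $\R^2\times\PP^1(\R)$ with Jacobian of sign $(\operatorname{sign}\det)^2=+1$); near $x_0,x_1$ the real locus is $(\R^2\times S^1)/\mu_3$ with $\mu_3$ acting freely and orientation-preservingly, hence orientable; gluing, $W(\R)=W'(\R)$ is orientable. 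The map $W'(\R)\to X(\R)=\PP^2(\R)$ is an $S^1$-fibration: a genuine circle bundle over $\PP^2(\R)\setminus\{x_0,x_1\}$, while near each $x_i$ the model $(\R^2\times S^1)/\mu_3\to\R^2/\mu_3$ is a Seifert fibration with a single multiple fibre of multiplicity $3$. Hence $N:=W(\R)$ (which is all of $W(\R)$, since $X(\R)$ is connected) is Seifert fibred over $F=\PP^2(\R)(3,3)$, a non-orientable orbifold with $\chi^{\mathrm{orb}}(F)=1-\tfrac23-\tfrac23=-\tfrac13<0$, i.e.\ of hyperbolic type. (This is consistent with Theorem~\ref{teo:multiple}, where $k(N)=2\le4$ and $\sum(1-\tfrac1{n_l})=\tfrac43\le2$, and with Corollary~\ref{cor:orbi}, whose hypothesis requires $F$ orientable.)

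\textbf{Main obstacle.} The delicate step is establishing that $X(\R)$ is \emph{exactly} $\PP^2(\R)$. This requires handling the two type $IV$ fibres carefully — their real loci are topological circles passing through the corresponding singular point and through $P_\infty$ — and then resolving the torus/Klein-bottle dichotomy, which I do via the Stiefel--Whitney (equivalently: intersection-with-section) computation above. The other point needing genuine, though routine, bookkeeping is the verification of minimality through the Galois action on the Picard lattice.
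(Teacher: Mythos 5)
Your approach is genuinely different from the paper's — you propose a direct Weierstrass model $w^2 = z^3 + cx^2y^2z + x^2y^2(ax^2+exy+fy^2)$ in $\PP(1,1,2,3)$, whereas the paper blows up $\PP^2$ at eight carefully chosen real points, produces a surface with two $A_2^-$ and two $A_1$ singularities, perturbs via the generalized Brusotti theorem, and then twists the real structure by the Bertini involution to convert $A_2^-$ into $A_2^+$. Your local analysis of $W'=\PP(T_X)$ (the $\frac13(1,1,2)$ quotient singularities, the conjugation swapping the two eigenlines so $W'(\R)$ avoids them, orientability, and the Seifert structure with two multiplicity-$3$ fibres over $\PP^2(\R)$) is correct and closely parallels the paper's Lemma~\ref{PTB}.

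However, your construction of $X$ has a fatal flaw in the minimality claim, and it is precisely the point you wave off as ``routine bookkeeping.'' Your analysis shows $\widehat X(\R)$ is an $S^1$-bundle over $S^1$ with all fibres a single oval (discriminant $\Delta = -16\,y^4\bigl(4c^3y^2 + 27(a+ey+fy^2)^2\bigr) < 0$ for real $y\ne 0$), hence connected, and you correctly identify it as the Klein bottle, so $X(\R)\cong\PP^2(\R)$ is \emph{connected}. But this is incompatible with $\rho(X)=1$. Run the Comessatti computation the paper uses in its own minimality proof: for the minimal resolution $S$ of $X$, which is a weak del Pezzo with $b_*(S(\C))=11$, the formula $\rho(S)=b_1(S(\R))+\lambda(S)$ together with $2\lambda(S)=b_*(S(\C))-b_*(S(\R))$ gives, when $S(\R)\cong\PP^2(\R)$ (so $b_*(S(\R))=3$, $b_1=1$), that $\lambda=4$, $\rho(S)=5$, and therefore $\rho(X)=\rho(S)-2=3$. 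A Du Val del Pezzo of degree $1$ with $\rho(X)=3$ admits a birational extremal contraction over $\R$ and is not minimal. The paper's example avoids this obstruction exactly because its $X(\R)$ is $\PP^2(\R)\sqcup 2\SSS$: then $b_*(S(\R))=7$, $\lambda=2$, $\rho(X)=1$. In fact the Comessatti constraint forces $b_*(S(\R))=7$ whenever $\rho(X)=1$ and $b_1(S(\R))=1$, so \emph{any} minimal example must have the two extra spherical components; no Weierstrass model producing a connected $X(\R)$ can work. This is why the paper's construction deliberately engineers two extra $A_1$ points (becoming ovals, then spheres, after the Brusotti smoothing) alongside the two $A_2$ points — your model has no mechanism for producing them.
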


Briefly, now, the contents of the paper.

Sections~\ref{sec:recall} and~\ref{sec:sevenconfig} are devoted to
the reduction of the
   proof of the main theorem to the assertion of non existence of seven
configurations of
singular points on a real component of a Du Val Del Pezzo surface of degree~1.

Two main methods used here are borrowed from \cite{CM08}: namely, the
generalization
of Brusotti's theorem to the effect that one can independently take
any smoothing of the
singularities of a Du Val Del Pezzo surface, and also the 
use of the plane model
where the family of hyperplane
sections of the quadric cone $Q$ is represented by
the family of parabolae in the plane with
a fixed asymptotic
direction.
These methods combine with a delicate argument, suggested by E.
Brugalle, excluding
the possibility of an  intersection  of $Q$ with a cubic surface yielding an
irreducible curve $B$ with four real cusps (see \ref{4cusps}).

Section~\ref{sec:config} introduces the main tools used in the proof
(the topological
classification of real smooth Del Pezzo surfaces of degree 1, and the
choice of the
appropriate partial smoothings), and ends with the exclusion of two
configurations
via complicated although elementary topological considerations.

Section~\ref{sec:euler} uses a  classification of  critical points
for the projection
of $B$ and a precise table for the local contributions to the
multiplicity of the
discriminant and for the  local contribution to the Euler number
in order to exclude  two more
cases.

Section~\ref{sec:lasteuler} proves Theorem~\ref{teo:main} by
excluding the three
remaining cases by combining all the previous tools with an ad hoc analysis and
with two new
tools, namely: the use of the Comessatti characteristic, relating the
total Betti number of
the real part with the one of the complex part,  and the calculation
of the contributions of the
singularities to the Picard and to the various Euler numbers.

Finally, Section~\ref{sec:3fold} is devoted to the proof of
Lemma~\ref{lem:question2} and  in Section~\ref{orbi}, after showing that
   the base orbifold cannot be oriented and hyperbolic, we exhibit 
the example of a projectivized tangent bundle over a Du Val Del Pezzo surface
for which a component $N$ is Seifert fibred with base orbifold of
hyperbolic type.

In the course of this complicated construction we give a quite general method
to construct Seifert fibrations as projectivized tangent bundles of
surfaces with $A_n$- singularities.

We want to thank E.~Brugalle
for pointing out the statement of lemma \ref{4cusps} and suggesting
the main idea of the proof , and Ingrid Bauer for helping us to understand the
configuration of lines on Del  Pezzo surfaces of degree $1$.

\section{Singular geometrically rational surfaces}\label{sec:recall}

Using the results and notation of \cite[Section 1]{CM08}, we reduce
the proof of
Theorem~\ref{teo:main} to the proof of a statement about singular Del
Pezzo surfaces of
degree 1 with small Picard number $\rho$.

Recall that  a surface singularity which is  a rational double point
is also called a Du Val
singularity and that a projective surface $X$ is called a {\em Du
Val} surface if  $X$ has
only Du Val singularities. A surface singularity is {\em of type
$A^+_\mu$} if it is real analytically equivalent to
$ x^2 + y^2 - z^{\mu + 1} = 0,\ \mu \geq 1\;;
$ and {\em of type $A^-_\mu$} if it is equivalent to
$ x^2 - y^2 - z^{\mu + 1} = 0,\ \mu \geq 1
$. The type $A_1^+$ is real analytically isomorphic to $A_1^-$;
otherwise, singularities
with different names are not isomorphic.

We recall some definitions due to Koll\'ar (see \cite[Section 1]{CM08}).
\begin{df}\label{def:norm} Let $V$ be a simplicial complex with only
a finite number of
points
$x \in V$ where $V$ is not a manifold. Define the {\em topological
normalization}
$$
\overline{n} \colon \overline{V} \to V
$$  as the unique proper continuous map such that $\overline{n}$ is a
homeomorphism
over the set of points where $V$ is a manifold and $\overline{n}^{-1}(x)$ is in
one-to-one  correspondence with the connected components of a good punctured
neighborhood of $x$ in $V$ otherwise.
\end{df}

Observe that if $V$ is pure of dimension $2$, then $\overline{V}$
is a topological manifold (since each point of $\overline{V}$ has
a neighbourhood which is a cone over $S^1$).

\begin{df} Let $X$ be a real Du Val surface, and let
$x \in X(\mathbb{R})$ be a singular point of type $A^\pm_\mu$ with
$\mu$ odd. The
topological normalization
$\overline{X(\mathbb{R})}$ has two connected components locally near
$x$. We will say
that
$x$ is {\em globally separating} if  these two local components lie
on different
connected components of $\overline{X(\mathbb{R})}$ and {\em globally
nonseparating}
otherwise. Let
\begin{multline*}
\PPP_X := \operatorname{Sing}X \setminus \left\{ x \textrm{ of type }
A^-_\mu,\ \mu
\textrm{ even}\right\} \\
\setminus \left\{ x \textrm{ of type } A^-_\mu,\ \mu \textrm{ odd and
} x \textrm{ is
globally nonseparating}\right\} \;.
\end{multline*}
\end{df}

Let $X$ be a real Du Val surface, let
$\overline{n} \colon \overline{X(\R)} \to X(\R)$ be the topological
normalization, and
let
$M_1,M_2,\dots, M_r$ be the connected components of  $\overline{X(\R)}$. By
\cite[Cor.~9.7]{KoIII}, the unordered sequence of numbers 
$$
m_i :=
\#(\overline{n}^{-1}(\PPP_X) \cap M_i ),\ i = 1,2,\dots, r
$$
is an
invariant for
extremal birational contractions of Du Val surfaces.

We will now reduce the proof of Theorem~\ref{teo:main} to the proof
of the following.

\begin{teo}\label{teo:dps1} Let $X$ be a real Du Val Del Pezzo
surface of degree 1 with
$\rho(X) \leq 2$.  Then  $m_i \leq 4$, $i = 1,2,\dots, r$, and
moreover for any $M
:=M_i$ such that $\overline{n} (M)$ contains $A^+_{\mu_1} +
A^+_{\mu_2} + \dots +
A^+_{\mu_{m_i}}$ where $A^+_{\mu_l}$ is globally separating for
$\mu_l$  odd, we have:
$$
\sum_{l = 1}^{m_i} (1- \frac1{\mu_l +1} )\leq 2 \;.
$$
\end{teo}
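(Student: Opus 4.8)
The plan is to reduce everything to the case of a minimal model and then to a finite enumeration of configurations of Du Val singularities on a Del Pezzo surface of degree $1$. First, I would observe that the quantities $m_i$ and the multiset of indices $\{\mu_l\}$ of the globally separating $A^+_{\mu_l}$-points on each component $M_i$ are, by the cited \cite[Cor.~9.7]{KoIII}, invariant under extremal birational contractions of Du Val surfaces; hence it suffices to prove the two assertions after running the real minimal model program on $X$, i.e.\ when $\rho(X)=1$. (The hypothesis $\rho(X)\le 2$ in the statement is what survives from the reduction of Theorem~\ref{teo:main} carried out in \cite[Section~1]{CM08}; one more contraction brings us to $\rho=1$.) So I would fix $X$ a real Du Val Del Pezzo surface of degree $1$ with $\rho(X)=1$.

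Next, I would exploit the very rigid geometry of degree-$1$ Del Pezzo surfaces. Such an $X$ is a double cover of the quadric cone $Q=\mathbb{P}(1,1,2)$ branched along the vertex and along a curve $B\in|\mathcal{O}_Q(3)|$ (a sextic in the weighted sense), and the Du Val singularities of $X$ correspond exactly to the singularities of $B$ (an $A_\mu$ on $X$ comes from an $A_\mu$ of $B$, with the real form $A^+_\mu$ versus $A^-_\mu$ recorded by whether the two local branches, or the local ovals, are real or complex conjugate, together with the separating/nonseparating global behaviour). The key numerical input is the bound on the total $\delta$-invariant: $B$ has arithmetic genus $2$, so $\sum_{p\in\mathrm{Sing}\,B}\delta_p\le 2$, and for an $A_\mu$-point $\delta=\lceil \mu/2\rceil=\lfloor(\mu+1)/2\rfloor$. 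Using the plane model from \cite{CM08} — where hyperplane sections of $Q$ become parabolae in $\mathbb{R}^2$ with a fixed asymptotic direction — I would pin down, on a single real component $M$ of $\overline{X(\R)}$, which collections of $A^+_{\mu_l}$'s (globally separating when $\mu_l$ is odd) can actually be realized. The bound $m_i\le 4$ then follows because each such point contributes at least $1$ to $\sum\delta$ when $\mu_l\ge 2$ and the genus-$2$ constraint together with the separating hypothesis forces at most four of them; and the inequality $\sum_l\bigl(1-\tfrac1{\mu_l+1}\bigr)\le 2$ is, after writing $1-\tfrac1{\mu_l+1}=\tfrac{\mu_l}{\mu_l+1}$, a consequence of comparing this with $\delta_{\mu_l}=\lfloor(\mu_l+1)/2\rfloor$ and summing, once one checks the finitely many boundary cases by hand.

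The main obstacle, and where the bulk of the paper's work goes, is that the naive genus/$\delta$-invariant count does not by itself exclude all configurations: it leaves a short list of candidate configurations (the "seven configurations" mentioned in the introduction) that are compatible with $p_a(B)=2$ and with the separating constraints, and these must be ruled out one at a time. The hardest of these is the case of an irreducible $B$ with four real cusps (i.e.\ four $A^+_2$-points), where the crude count is tight; this is excluded by the argument attributed to Brugalle via the intersection of $Q$ with a cubic surface (see the forthcoming Lemma~\ref{4cusps}). The remaining cases are excluded by a mixture of the generalized Brusotti independent-smoothing technique, the parabola-pencil plane model, Euler-number and discriminant-multiplicity bookkeeping for the projection of $B$, and Comessatti-characteristic estimates relating $b_*(X(\R))$ to $b_*(X(\C))$, exactly as outlined for Sections~\ref{sec:sevenconfig}--\ref{sec:lasteuler}. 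Once all configurations violating either $m_i\le 4$ or the index inequality are shown to be non-existent, Theorem~\ref{teo:dps1}, and hence Theorem~\ref{teo:main}, follows.
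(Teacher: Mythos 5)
Your high-level picture — anticanonical double cover of the quadric cone $Q$, branch sextic $B$, reduction to a finite list of candidate configurations which are then excluded one at a time via Brusotti smoothings, the parabola plane model, Euler/discriminant bookkeeping, and the Comessatti characteristic — matches the paper's strategy. But several concrete steps in your reduction are wrong or missing, and they are load-bearing.

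First, the arithmetic genus is off by a factor of two: $B$ is the complete intersection of the quadric cone with a cubic surface in $\PP^3$, hence $p_a(B)=4$, not $2$. (With your bound $\sum\delta_p\le 2$ you could not even accommodate the configuration $4A_1^+$, which the theorem must allow.) Second, the further reduction to $\rho(X)=1$ is not justified: \cite[Cor.~9.7]{KoIII} gives invariance of the numbers $m_i$ under extremal contractions, not invariance of the multiset of singularity \emph{types} $\{\mu_l\}$, and the inequality $\sum(1-\tfrac1{\mu_l+1})\le 2$ depends on the types. The paper in fact works throughout with $\rho(X)\le 2$; in Lemma~\ref{lem:rho-euler} it uses precisely $2\le\rho(X')\le 3$, which is the flexibility $\rho(X)\in\{1,2\}$. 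Third, your claim that the inequality follows by "comparing $1-\tfrac1{\mu_l+1}$ with $\delta_{\mu_l}$ and summing" does not hold even with the correct genus: for instance the configuration $4A_2$ has $\sum\delta=4\le p_a(B)$ yet $\sum(1-\tfrac1{\mu_l+1})=\tfrac{8}{3}>2$, which is exactly why the paper needs the separate geometric argument of Lemma~\ref{4cusps} (a curve $B$ with four real cusps cannot exist), and likewise the seven listed configurations all pass the $\delta$-count and must be killed by other means. Finally, the initial reduction to a finite list is organized by the number of irreducible components of the trisection $B$ (three sections, section plus bisection, or irreducible), with a different elementary argument in each case; this case split, which drives Section~\ref{sec:sevenconfig}, is absent from your sketch. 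As it stands the proposal identifies the right ingredients but leaves the reduction genuinely unproven.
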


Up to Section~\ref{sec:3fold}, the sequel of this paper is devoted to
the proof of
Theorem~\ref{teo:dps1}.

\section{Reducing to seven configurations}\label{sec:sevenconfig}

Numerically, the following configurations of $A_{\mu}^+$ 
singularities are the only ones
allowed by the inequality
\begin{equation}
\sum_{l = 1}^{m_i} (1- \frac1{\mu_l +1} )\leq 2 \;.\label{eq:main}
\end{equation}

\begin{itemize}
\item $m_i = 4$ and the configuration is $4A_1^+$,
\item $m_i = 3$ and the configuration is
\begin{itemize}
\item $2A_1^++A_\mu^+$, any $\mu$, or
\item $A_1^+ + A_2^+ + A_\mu^+$, $\mu \leq 5$, or
\item $A_1^+ + 2A_3^+$,
\item $3A_2^+$,
\end{itemize}
\item $m_i = 2$.
\end{itemize}

Recall that  a Du Val Del Pezzo surface $X$ is by definition a Du Val
surface (i.e., a
surface with only rational double points as singularities)  whose
anticanonical divisor
is ample, see \cite[Section~2]{CM08}.  The anticanonical model of a
Del Pezzo surface
$X$ of degree 1 is a ramified double covering $q \colon X \to Q$ of a
quadric cone $Q
\subset
\PP^3$ whose branch locus is the union of the vertex of the cone with
a curve $B$
not passing through the vertex and which is the complete intersection
of the cone with a cubic surface.

Let $X$ be a real Du Val Del Pezzo surface of degree 1 and let
$X'$ be the singular surface obtained from $X$ by blowing up the
pull-back by $q$ of
the vertex of the cone (which is a smooth point of $X$). The surface
$X'$ is a ramified
double covering of the Hirzebruch surface $\F_2$ whose branch curve
is the union of the
unique section of negative selfintersection, the section at infinity
$\Sigma_\infty$,
and the trisection $B$ of the ruling $p \colon \F_2 \to \PP^1$, which
is disjoint from
$\Sigma_\infty$.  The composition $X' \to \F_2 \to \PP^1$ is a real
elliptic fibration.

The different cases that we shall now consider are distinguished by
the number of
irreducible components of the trisection $B$. Notice that if all the
singular points
are of type $A_1$, the conclusion of  Theorem~\ref{teo:dps1} follows from
\cite[Proposition~2.1]{CM08}.

\subsection{Three components}

If $B$ has strictly more than 4 real singular points, all the possible cases
are enumerated in
\cite[Section~2]{CM08}, and an inspection of [Ibid., Figures 1, 2, 3]
shows that for
any connected component of the  complement $\F_2 (\R) \setminus B (\R)$, the
configuration is $4 A_1$ or $A_3+2A_1$. Thus the inequality
(\ref{eq:main}) holds
except possibly in the situation where two irreducible components
of $B$ are tangent to the third one. It
turns out that there is only one normal form for this situation, see
Figure~\ref{fig:2tacnodes}. Indeed, the affine part of $B$ is a union of three
parabolae and without loss of generality, these three parabolae are
given by $y = 0$,
$y = x^2$ and
$ y = \alpha (x - a)^2,\ \alpha,\ a \in \R$ [Ibid.]. We have $a\ne 0$,
else $B$ has a triple point with an infinitely near triple point,
contradicting the fact that $X$ has only Du Val singularities.
Furthermore, in order to get at least three real intersection
points, $\alpha$ has to be positive. Up to reflection $x
\leftrightarrow -x$, this
leads to one possibility.

\begin{figure}[htbp]
\begin{center}
     \epsfysize=3cm
    \epsfbox{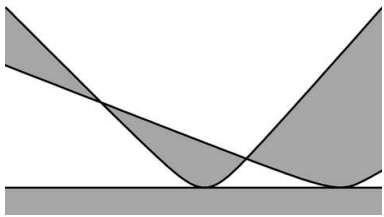}
\caption{Three parabolae with two tacnodes.}\label{fig:2tacnodes}
\end{center}
\end{figure}

Recalling that in this figure two components are connected at infinity if their
boundaries have two unbounded arcs belonging to the same pair of
parabolae, we see that
none of the connected components of $\F_2 (\R) \setminus B (\R)$
contains more than 3
singular points and at most two of them are tacnodes. Thus
(\ref{eq:main}) holds also
in this case.

\subsection{Two components} Then  $B = L \cup C$ where $C$ is a
bisection of the
ruling $p$ and $L$ is a section. The bisection $C$ has arithmetic
genus one, hence it has at most one
double point $A_1$
or $A_2$ and at most 4 intersection points with the section $L$.

If $C$ is non singular, we have $4A_1$ or $2A_1+A_3$ or only two
singular points. In
each case we get an allowed  configuration.

Assume $C$ is singular: if $B$ has 5 singular points,  we are done
 since either
all singular points are  of type $A_1$,  see \cite[Figures 4,
5, 6]{CM08}, or we are in the situation depicted in 
\cite[Figure 7]{CM08} and then the $A_2^+$ is on a component
with only two other singularities, of type  $A_1$. If $B$ has 4
singular points, the possibilities are
$A_1+A_3+2A_1$, or $A_2+A_3+2A_1$. If $B$ has 3 singular points, the
possibilities are
$A_1+2A_3$, or $A_2+2A_3$, or $A_2+A_1+A_5$.

Thus if $B$ has two irreducible components, we get the conclusion of
Theorem~\ref{teo:dps1} unless the configuration of singular points is
$A_3 + 3A_1$,
or $A_3 + A_2 + 2A_1$, or $2A_3 + A_2$.

\subsection{One component} If the trisection is irreducible, then it
has at most 4
singular points, since $B(\C)$ has genus 4.

\begin{lem}\label{4cusps} The real curve $B$ cannot have 4 real cusps.
\end{lem}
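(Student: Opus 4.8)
The plan is to reduce first to the case where $B$ is a rational curve, then to extract from Riemann--Hurwitz (applied to the ruling projection) a configuration so rigid that its real topology becomes contradictory; this last step is the delicate point, and the one I expect to be the main obstacle.

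First, suppose for contradiction that the irreducible trisection $B$ has four real cusps. An ordinary cusp has $\delta$-invariant $1$ and $B(\C)$ has arithmetic genus $4$, so the four cusps are all of the singularities of $B$ and the normalization $\nu\colon\PP^1\to B$ has target of genus $0$. Composing with the ruling gives a degree $3$ morphism $\pi=p\circ\nu\colon\PP^1\to\PP^1$, and Riemann--Hurwitz yields $\sum_P(e_P-1)=4$. The key local computation is at a cusp $c$ of $B$: in local analytic coordinates on $\F_2$ adapted to the ruling a cusp is $t\mapsto(t^2,t^3)$ up to a linear change, so the ramification index of $\pi$ over $c$ is $3$ when the cuspidal tangent line is the ruling line through $c$, and $2$ otherwise. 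Hence each of the four cusps contributes at least $1$ to $\sum_P(e_P-1)$, with equality exactly when its cuspidal tangent is not a ruling line; since the total equals $4$, \emph{all four cuspidal tangents are transverse to the rulings} and $\pi$ has no other ramification.

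Next I would translate this into the plane model, writing $B$ as $\{y^3+q_2(x)y^2+q_4(x)y+q_6(x)=0\}$ with $\deg q_{2i}\le 2i$. The conclusion above says that $B$ has no smooth point with vertical tangent, so the $y$-discriminant $\Delta(x)$, of degree at most $12$, vanishes only at the four abscissae of the cusps. A cusp with non-vertical tangent contributes exactly $3$ to $\operatorname{ord}\Delta$ (the fibre of the real elliptic fibration $X'\to\PP^1$ over that point is of Kodaira type $I_3$), so $\deg\Delta=12$, the four abscissae are real and pairwise distinct, the fibre over $x=\infty$ is smooth, and $X'$ is a rational elliptic surface whose only singular fibres are four fibres of type $I_3$. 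The real picture is then extremely constrained: $B(\R)$ is an embedded circle carrying exactly four cusps and no vertical tangent, the ruling projection restricted to it has precisely four fold points, two of them local maxima of the abscissa (``cusps opening to the right'') and two local minima (``opening to the left''), alternating along $B(\R)$, and the four cusp abscissae cut the base circle into four arcs over which $B$ has, alternately, three and one real points.

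It remains to derive a contradiction from this configuration, which is the part suggested by Brugalle and which I expect to be the real difficulty. I would run a bookkeeping on the plane model: over a ``three-point'' arc the three real branches are the ordered real roots $y_1<y_2<y_3$ of the monic cubic, and at each bounding cusp either the two lower or the two upper branches collide while the third survives into the adjacent ``one-point'' arc; following the surviving branch around $B(\R)$, using that a monic real cubic always has a real root (so no branch ever escapes to $y=\infty$) and that $B$ is irreducible (so distinct branches never cross), one checks that the cyclic pattern of collisions forced by the two-plus-two alternation of the cusp types cannot close up consistently. That contradiction proves the lemma. (The same conclusion could alternatively be reached through the real classification of rational elliptic surfaces with four $I_3$ fibres, together with the requirement that all four singular fibres be real.)
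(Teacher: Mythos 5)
Your first two steps are correct and are essentially the paper's steps packaged differently: the reduction to $B$ rational via $\delta$-count is the same, and your Riemann--Hurwitz computation showing that all four cuspidal tangents are transverse to the ruling and that $p|_B$ has no further critical points is a clean substitute for the paper's count of discriminant multiplicities (each transverse cusp contributes exactly $3$ to $\deg\Delta=12$).

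The gap is in the final step, and it is not merely a matter of ``bookkeeping one expects to work out.'' The purely combinatorial constraint that the branch-collision pattern close up into a \emph{single} circle while every fibre has $\geq 1$ real point does \emph{not} rule out four transverse cusps. Concretely: label the base arcs cyclically $I_1$ (three branches $a_1<a_2<a_3$), $I_2$ (one branch), $I_3$ (three branches $b_1<b_2<b_3$), $I_4$ (one branch), with cusps $c_{12},c_{23},c_{34},c_{41}$ separating them, and take the colliding pair to be $\{a_1,a_2\}$ at $c_{12}$, $\{b_1,b_2\}$ at $c_{23}$, $\{b_2,b_3\}$ at $c_{34}$, and $\{a_2,a_3\}$ at $c_{41}$. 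Tracing through, the eight branch-arcs and eight fibre-points over the cusp abscissae form a single cycle, branches never cross and never escape to infinity; so this configuration survives every constraint you list, and ``following the surviving branch around'' never produces a contradiction. The extra input the paper uses (and which the authors credit to Brugall\'e) is geometric, not topological: take the section $L'$ of the ruling passing through three of the four cusps. Because $L'\cdot B=6=3\cdot 2$, $L'$ meets $B$ only at those three cusps and transversally to their cuspidal tangents; since $B(\R)\cong S^1$ is connected and never crosses $L'$, it lies entirely on one side of $L'$. In coordinates where $L'=\{y=0\}$ and $B(\R)\subset\{y\geq 0\}$, each of those three cusps has its two colliding branches on the \emph{bottom}. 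It is exactly this forced alignment of collision types at three cusps simultaneously --- impossible to deduce from the combinatorics alone, as the example above shows --- that makes the tracking argument close. Without $L'$ your argument cannot be completed as written.

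Your parenthetical alternative, however, is a genuinely different and valid route if carried out: four transverse cusps make $X'$ a rational elliptic surface with section and fibre configuration $4I_3$, which is extremal (Mordell--Weil rank $0$, torsion $(\Z/3)^2$) and therefore isomorphic over $\C$ to the Hesse pencil; its four singular fibres sit at $\{\infty,1,\omega,\omega^2\}$, an equianharmonic quadruple with cross-ratio orbit $\{e^{\pm i\pi/3}\}$. Four real points of $\PP^1$ always have real cross-ratio, so the four singular fibres can never all be real, which is the desired contradiction. That argument is shorter and more conceptual than the paper's, at the cost of invoking the classification of extremal rational elliptic surfaces.
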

\begin{proof} Suppose that $B$ is irreducible with 4 real
cusps. Choose three of them. Let $L'$ be a section of the ruling $p$
corresponding to a
plane section of $Q$ passing through
these three points; for an appropriate choice of the plane model of $Q$
(see \cite{CM08}, beginning of section 2) we may assume
$L'$ to be the horizontal $x$-axis $ y = 0$ in the plane

Since the intersection number $ L' \cdot B = 6$, we get that $L'$ intersects
$B$ exactly at the three chosen cusps, and transversally. This means that,
w.l.o.g., $B$ lies in the upper halfplane: in fact, since $B$ is
rational and irreducible,
then its real part $B(\R)$ is homeomorphic to $S^1$, in
particular it is  connected.

Observe moreover that none of the cusps is tangent to a fibre, since each cusp
gives a contribution at least 3 to the local multiplicity of the
discriminant of $B$, and this contribution becomes 4 if the cusp is tangent to
the fibre: and the order of the discriminant is 12.

   In fact, we get more from this calculation: the projection $p$ has
no further critical
points on $B$.

It follows that the projective line with coordinate $x$ is divided
into 4 open intervals,
such that the cardinality of the fibre of $p \colon B(\R) \ra
\PP^1_{\R}(\R)$ varies
alternatingly from $3$  to $1$.

On the intervals where we have  3 counterimages, it makes sense to
talk about first,
second and third branch (ordered according increasing value of the
$y$-coordinate), on
each interval it makes sense to talk about the highest and the lowest branch.

Whenever one moves on $\PP^1_{\R}(\R)$ and goes across a cusp lying on
the $x$-axis, the highest branch continues to be the highest
branch.

Since three of the cusps lie on the x-axis, we may assume that the fourth cusp
is located at $ x = \infty$, and the three cusps with $ y =0$ occur for
$ x = A,B,C$ where $ A < B < C$. Then the highest branch over the interval
$ (- \infty , A)$ remains the highest branch on the whole real line
by virtue of
the previous remark. By compactness of $B(\R)$ we get a connected component
of $B(\R)$ mapping to $\PP^1_{\R}(\R)$ homeomorphically,
contradicting our previous
assertion about the cardinalities of the fibres.

\end{proof}

Thus, if $B$ is irreducible,  we observe that $B$ has arithmetic genus $4$,
and nonnegative geometric genus: hence the `number of double points'
$\delta$  is
at most $4$.
But  each point of type
$A_n$  contributes exactly $ [ \frac{n+1}{2}]$ double points.
    Therefore an elementary calculation shows that we get the
conclusion of  Theorem~\ref{teo:dps1} unless the configuration of
singular points is
one of the following : $A_4 + 2A_2$, $A_3 + 2A_2$, or
$3A_2 + A_1$, or $2A_2 + 2A_1$, or $A_2 + 3A_1$.

We are going now to exclude the first case by an argument  similar 
to the one of Lemma \ref{4cusps}, even if it could also be treated by
the same methods used in Section ~\ref{sec:euler}.

\begin{lem}\label{2cusps} The real curve $B$ cannot have 2 real
$A_2$ singularities and an $A_4$ singularity.
\end{lem}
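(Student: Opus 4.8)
The plan is to mimic the strategy of Lemma \ref{4cusps}: pick a distinguished section of the ruling through some of the singular points, use the intersection number $L'\cdot B = 6$ together with a count of multiplicities to pin down exactly how $L'$ meets $B$, and then obtain a contradiction from the possible cardinalities of the fibres of the projection $p\colon B(\R)\to\PP^1_\R(\R)$. Concretely, since $B$ has arithmetic genus $4$ and the configuration is $A_4 + 2A_2$, the curve is rational (the $A_4$ contributes $2$ and each $A_2$ contributes $1$ to $\delta$, giving $\delta = 4 = p_a$), so $B(\R)$ is again homeomorphic to $S^1$ and in particular connected. First I would choose $L'$ to be a plane section of $Q$ passing through the two $A_2$ points and the $A_4$ point — three points determine a plane section of the quadric cone — and normalise the plane model so that $L'$ is the $x$-axis $y=0$. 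An $A_2$ cusp forces local intersection multiplicity $\geq 2$ with a generic line through it, and an $A_4$ point (locally $y^2 = x^5$ after normalising) forces multiplicity $\geq 2$ as well; pairing these against $L'\cdot B = 6$ shows that $L'$ meets $B$ exactly at those three singular points, each with multiplicity $2$, and nowhere else. Hence, as before, $B$ may be taken to lie in the closed upper half-plane.

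The next step is the discriminant/ramification bookkeeping, exactly as in the previous lemma. The discriminant of the projection $p$ on $B$ has degree $12$. An $A_2$ cusp contributes $\geq 3$ to the local multiplicity of the discriminant, rising to $\geq 4$ if the cuspidal tangent is vertical (fibre-direction); an $A_4$ point contributes $\geq 4$, rising further if its tangent is vertical. Summing the minimal contributions $3+3+4 = 10$ against the total $12$ shows that none of the three singular points has vertical tangent, and that $p$ has at most one further simple ramification point on $B$ — in fact, a more careful look (an $A_4$ point in fact contributes more than the naive $4$ unless positioned generically) should eliminate even that, leaving $p|_B$ with no critical points other than the three singularities, all of which have horizontal tangent. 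As in Lemma \ref{4cusps}, across each such singularity the highest branch of $B$ (ordered by $y$-coordinate) continues to be the highest branch, so tracking the top branch along $\PP^1_\R(\R)$ produces a connected component of $B(\R)$ mapping homeomorphically onto $\PP^1_\R(\R)$, contradicting the alternation of fibre cardinalities $3,1$ forced by the three simple branch points on the $x$-axis.

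The main obstacle I expect is the local analysis at the $A_4$ point. Unlike a cusp, an $A_4$ singularity of a plane curve has two smooth branches with a high order of contact (it is an $E_8$-type tangency, locally $y^2 - x^5$ in suitable coordinates but appearing on $B$ as a tacnode-like point with $\delta = 2$), so I must be careful about (i) its exact local intersection multiplicity with the chosen section $L'$, which governs whether the "$L'$ meets $B$ only at the three points" step actually closes, and (ii) its precise contribution to the discriminant of $p$, which must be computed from the table in Section \ref{sec:euler} rather than guessed. If the naive bounds do not suffice to kill the residual ramification point, I would instead invoke the Euler-number / discriminant table of Section \ref{sec:euler} directly, or fall back on the remark in the text that this case "could also be treated by the same methods used in Section \ref{sec:euler}" — i.e. treat $A_4 + 2A_2$ by the systematic local-contribution computation there. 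A secondary check is that the three chosen points are genuinely non-collinear on $Q$ in the sense needed to get a section $L'$ (rather than a degenerate plane section through the vertex), but since none of the singular points of $B$ passes through the vertex of the cone this is automatic.
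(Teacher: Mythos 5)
Your overall strategy --- put the three singular points on a section $L'$ with $L'\cdot B=6$, deduce that $B(\R)$ lies in the upper half-plane, and then do discriminant bookkeeping against the total degree $12$ --- is exactly the paper's. But your local analysis of the $A_4$ point is wrong in two ways that break the argument. First, $A_4$ is \emph{not} ``two smooth branches with a high order of contact'': $y^2=x^5$ is unibranch (a higher cusp with $\delta=2$), not a tacnode-like point; this is what makes it behave like the $A_2$ cusps for the highest-branch argument (the fibre cardinality drops by $2$ there). Second, and consequently, its contribution to the multiplicity of the discriminant when transversal to the fibre is $5$ (the paper's formula $2n+1$ for a unibranch $A_{2n}$, here $n=2$; equivalently $2\delta+(e-1)=4+1$), not the tacnodal value $4$ you borrowed from Table~\ref{tab:euler}. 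With your count $3+3+4=10$ the conclusion ``no singular point is vertical and at most one further simple ramification point'' does not follow (there is room for $2$ more, e.g.\ a vertical cusp plus a return), and your hope that ``a more careful look should eliminate even that'' is false: the correct count $3+3+5=11$ leaves \emph{exactly one} further critical point in the all-transversal case, and it cannot be wished away --- the contradiction must be derived in its presence, which is what the paper does.

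Moreover your case analysis is incomplete: the configuration in which one of the $A_2$ cusps \emph{is} tangent to a fibre (contributions $4+3+5=12$, hence no further critical points at all) is a genuine possibility that the paper treats separately, and which your incorrect inequality led you to discard. (The $A_4$ itself cannot be vertical, since that would force local intersection $5$ with a fibre of a trisection; you would need to note this too.) Your fallback of invoking the table of Section~\ref{sec:euler} does not rescue things: $A_4$ does not appear in that table, and the Euler-number method there relies on data (the Picard number bound and the topological classification of the smoothing) that is not set up for this configuration. So the proposal has the right skeleton but a genuine gap at precisely the point the lemma turns on, namely the local structure of the $A_4$ singularity.
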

\begin{proof} We already know that $B$ is irreducible and we argue as in
Lemma \ref{4cusps}, assuming that the three singular points lie on
the horizontal $x$-axis $ \{y = 0 \} := L'$ in the plane and that,
since  $B(\R)$ is homeomorphic to $S^1$, $B$
and  $L'$ intersect  exactly at the three chosen points, and transversally,
hence $B(\R)$  lies in the upper halfplane.

If none of the cusps is tangent to a fibre, since each cusp $A_{2n}$
gives a contribution $2n+1$ to the local multiplicity of the
discriminant of $B$,  and the order of the discriminant is 12,
there is exactly another  critical point for the restriction of
the projection $p$ to $B$, and the same argument as in Lemma
\ref{4cusps} provides the same contradiction.

   There remains the case where exactly one cusp is vertical,
and there are no further critical points.

It follows that the projective line with coordinate $x$ is divided
into 3 open intervals,
and  the cardinality of the fibre of $p \colon B(\R) \ra
\PP^1_{\R}(\R)$ must be
equal to $1$ on the two intervals neighbouring the vertical cusp.
At the two other cusps the highest branch remains the highest,
and we get the usual contradiction (since over  the third interval we
have three
branches).

\end{proof}

In any case, regardless of the difference between $A_\mu^+$ and
$A_\mu^-$, we have
reduced the problem to the exclusion of 7 configurations. For any of these
configurations, we can suppose that all singular points are of type
$A_\mu^+$ with
$A^+_{\mu_l}$ globally separating for $\mu_l$ is odd. Indeed, if one
of the point is
not of this type, the sum $\sum (1- \frac1{\mu_l +1} )$ restricted to
the remaining
points if less than or equal to 2.

Summarizing, we get seven remaining configurations to be excluded:

\begin{enumerate}
   \item\label{2A3+A2}
$2A^+_3 + A_2^+$ (Section~\ref{sec:lasteuler})
\item\label{A3+2A2}
$A^+_3 + 2A_2^+$ (Section~\ref{sec:euler})
\item\label{A3+A2+2A1}
$A^+_3 + A_2^+ + 2A_1^+$ (Section~\ref{sec:config})
\item\label{A3+3A1}
$A^+_3 + 3A_1^+$ (Section~\ref{sec:config})
\item\label{3A2+A1}
$3A^+_2 + A_1^+$ (Section~\ref{sec:euler})
\item\label{2A2+2A1}
$2A^+_2 + 2A_1^+$ (Section~\ref{sec:lasteuler})
\item\label{A2+3A1}
$A^+_2 + 3A_1^+$  (Section~\ref{sec:lasteuler})
\end{enumerate}

\section{Smoothings of Du Val  Del Pezzo surfaces}\label{sec:config}

We recall that our problem consists in giving an estimate concerning
   the configurations of
certain singular points lying on a component of the topological normalization
of a real Du Val Del Pezzo surface $X$. For this purpose, we want to understand
as much as possible the topology of  $X (\R )$, and we do this by
taking a global
smoothing of $X$, and then using the known topological classification
of smooth real Del Pezzo surfaces of degree 1.

   The best strategy is to choose a global smoothing realizing certain
local smoothings of the singularities chosen a priori. That this can be done
for all choices of the local smoothings holds true by a generalization of
the theorem of Brusotti which was proven in our preceding paper.

\begin{teo}\cite[Th.~4.3]{CM08}\label{teo:generalizedbrusotti} Let
$X$ be a Du Val Del
Pezzo surface. One can obtain, by a global small deformation of $X$,
all the possible
local smoothings of the singular points of $X$.
\end{teo}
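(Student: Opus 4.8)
The plan is to obtain this as an instance of the local--global principle in deformation theory: for a projective surface with isolated singularities, a versal deformation maps smoothly onto the product of the miniversal deformations of the singular germs as soon as one obstruction group vanishes. Let $p_1,\dots,p_s$ be the singular points of $X$ --- all rational double points, hence hypersurface (in particular, local complete intersection) singularities --- let $\operatorname{Def}(X,p_i)$ be the smooth base of the miniversal deformation of the germ $(X,p_i)$, and recall the low-degree exact sequence of the local-to-global $\operatorname{Ext}$ spectral sequence
$$
H^1(X,\Theta_X)\longrightarrow T^1_X\longrightarrow\bigoplus_{i=1}^{s}T^1_{X,p_i}\longrightarrow H^2(X,\Theta_X),
$$
where $\Theta_X=\mathcal{H}om(\Omega^1_X,\mathcal O_X)$, $T^1_X$ is the tangent space to the deformation functor $\operatorname{Def}_X$, and $T^1_{X,p_i}$ is the tangent space to $\operatorname{Def}(X,p_i)$. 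Since $X$ is lci one has $\mathcal{E}xt^q(\Omega^1_X,\mathcal O_X)=0$ for $q\ge 2$ and $\mathcal{E}xt^1(\Omega^1_X,\mathcal O_X)$ is a skyscraper supported on $\operatorname{Sing}X$, so the obstruction space of $\operatorname{Def}_X$ reduces to $T^2_X=H^2(X,\Theta_X)$. Thus \emph{everything comes down to the single vanishing} $H^2(X,\Theta_X)=0$: it makes $\operatorname{Def}_X$ smooth and the Kodaira--Spencer map $T^1_X\to\bigoplus_i T^1_{X,p_i}$ surjective, so (both sides being smooth) the natural morphism $\operatorname{Def}_X\to\prod_i\operatorname{Def}(X,p_i)$ is a submersion near the base point.

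The decisive step is therefore this vanishing for a Du Val Del Pezzo surface, and this is where the Del Pezzo hypothesis is genuinely used. I would pass to the minimal resolution $\varpi\colon Y\to X$: then $Y$ is a weak Del Pezzo surface --- a smooth rational surface with $-K_Y$ nef and big --- and $\varpi^{*}K_X=K_Y$ because Du Val singularities are canonical. As $X$ is Gorenstein with $\omega_X=\mathcal O_X(K_X)$ and $\Theta_X$ is reflexive, Serre duality on the Cohen--Macaulay projective surface $X$ gives
$$
H^2(X,\Theta_X)^{\vee}\cong\operatorname{Hom}(\Theta_X,\omega_X)=H^0\bigl(X,\Omega^{[1]}_X(K_X)\bigr),
$$
with $\Omega^{[1]}_X=(\Theta_X)^{\vee}$ the sheaf of reflexive $1$-forms; since reflexive $1$-forms extend across resolutions of canonical surface singularities, the right-hand group equals $H^0\bigl(Y,\Omega^1_Y(K_Y)\bigr)$. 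A nonzero section of $\Omega^1_Y(K_Y)$ would exhibit the big line bundle $\mathcal O_Y(-K_Y)$ as an invertible subsheaf of $\Omega^1_Y$, which is impossible on a rational surface by Bogomolov's theorem on sub-line-bundles of the cotangent sheaf (a big such subsheaf would force $Y$ to be of general type). Hence $H^0(Y,\Omega^1_Y(K_Y))=0$ and $H^2(X,\Theta_X)=0$; one may equivalently quote directly the classical vanishing $H^2(Y,\Theta_Y)=0$ for weak Del Pezzo surfaces, which dominates $H^2(X,\Theta_X)$ through the Leray spectral sequence of $\varpi$.

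With the vanishing in hand, Artin's algebraization of formal moduli provides an honest algebraic family $\mathcal X\to B$ over a smooth pointed base realizing the versal deformation of $X$, and the argument of the first paragraph makes $B\to\prod_i\operatorname{Def}(X,p_i)$ smooth, in particular surjective; since ``$-K$ ample'' is an open condition, after shrinking $B$ every fibre is again a (partially or fully smoothed) Del Pezzo surface, automatically projective. A prescribed collection of local smoothings is by definition a point $\sigma=(\sigma_1,\dots,\sigma_s)\in\prod_i\operatorname{Def}(X,p_i)$, each $\sigma_i$ lying in the stratum of the local discriminant that records the chosen residual singularity at $p_i$ (the open stratum when $p_i$ is to be fully smoothed); any $b\in B$ over $\sigma$ then has fibre $\mathcal X_b$ equal to the required global small deformation. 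I expect the main obstacle to be precisely the vanishing $H^2(X,\Theta_X)=0$, together with the bookkeeping needed to be sure that every adjacent rational-double-point stratum inside each $\operatorname{Def}(X,p_i)$ is genuinely met by the submersion; the passage from formal smoothness of the deformation functors to a projective algebraic family is a more routine point, handled by Artin approximation and the openness of ampleness of $-K$. For degree $1$ one can alternatively argue in a way that meshes with the plane-model computations of the rest of the paper: the anticanonical double cover $q\colon X\to Q$ identifies the rational double points of $X$ with the $A_k$-singularities of the branch sextic $B$ --- since $z^2=x^2+y^{k+1}$ is simultaneously the $A_k$ surface germ and the double cover of the plane $A_k$ germ --- so the statement reduces to the classical independence of the smoothings of the $A_k$ points of the curve $B$ on $\mathbb{F}_2$, i.e. to the vanishing of an $H^1$ of the appropriate equisingular ideal sheaf.
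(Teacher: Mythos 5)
The present paper contains no proof of this statement: it is quoted verbatim from the authors' first paper [CM08, Th.~4.3], so the only possible comparison is with the proof given there, which runs along the same local-to-global deformation-theoretic lines you propose. On the complex-analytic side your reconstruction is sound: for a Gorenstein surface with rational double points the obstruction space of $\operatorname{Def}(X)$ reduces to $H^2(X,\Theta_X)$; Serre duality identifies its dual with $H^0(X,\Omega^{[1]}_X\otimes\omega_X)=H^0(Y,\Omega^1_Y\otimes\omega_Y)$ on the minimal resolution $Y$ (Steenbrink's extension theorem for quotient singularities), and this vanishes by Bogomolov --- or, more cheaply, because $\Omega^1_Y\otimes\omega_Y$ is a direct summand of $(\Omega^1_Y)^{\otimes 3}$ and $Y$ is rational. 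One caveat on your parenthetical alternative: the Leray spectral sequence does \emph{not} let you deduce $H^2(X,\Theta_X)=0$ directly from $H^2(Y,\Theta_Y)=0$, since the map $H^2(X,\varpi_*\Theta_Y)\to H^2(Y,\Theta_Y)$ has kernel a quotient of $H^0(X,R^1\varpi_*\Theta_Y)$, which is nonzero at Du Val points; the duality argument is the one to keep.

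The genuine gap is that you prove the complex statement, whereas the theorem, as it is used throughout Sections 3--5, is a statement about \emph{real} smoothings. The ``cut'' and ``cylinder'' smoothings of an $A_1^+$ point, $z^2=x^2+y^2+\varepsilon^2$ versus $z^2=x^2+y^2-\varepsilon^2$, are isomorphic as complex deformations and differ only as real forms, i.e.\ as distinct real points of the local deformation space; likewise for the cut$+$sphere versus cylinder smoothings of $A_3^+$. So surjectivity of $\operatorname{Def}(X)\to\prod_i\operatorname{Def}(X,p_i)$ over $\C$ is not enough: the whole argument must be run equivariantly for the anti-holomorphic involution --- the versal family must be taken defined over $\R$, the $H^2$-vanishing and the surjectivity of the tangent map are Galois-stable and hence descend to statements about $\R$-vector spaces, and one concludes that the induced map on \emph{real} points is a submersion of real-analytic germs, therefore hitting every real point of $\prod_i\operatorname{Def}(X,p_i)$ near the origin. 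This is routine once stated, but it is precisely the content that makes the theorem usable here, and your write-up never addresses it. (Your closing alternative via the branch curve on $\F_2$ suffers from the same omission and in any case covers only degree $1$, while the statement concerns arbitrary Du Val Del Pezzo surfaces.)
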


\begin{prop}[Global]\label{sec:dps1} Let $X$ be a real smooth Del
Pezzo surfaces of
degree 1: then the real part $X(\R)$ is diffeomorphic to one of the
surfaces in the
following list:

\begin{itemize}
\item $\mathbb{P}^2(\R) \sqcup p \SSS,\  p=1,\dots,4$;
\item $\mathbb{P}^2(\R) \sqcup \KKK$;
\item $\#^{3}\mathbb{P}^2(\R) \sqcup  \SSS$;
\item $\#^{2p+1}\mathbb{P}^2(\R),\  p=0,\dots,4$.
\end{itemize}

Here $\#^l\mathbb{P}^2(\R)$ denotes the connected sum of $l$ copies
of the real
projective plane, $\KKK = \mathbb{P}^2(\R)\# \mathbb{P}^2(\R)$
denotes the Klein bottle
and $p\SSS$ denotes the disjoint union of $p$ copies of the $2$-sphere.
\end{prop}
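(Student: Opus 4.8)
The plan is to derive the list of real Del Pezzo surfaces of degree $1$ from the well-known classification of real structures on such surfaces, using the fact that a real smooth Del Pezzo surface of degree $1$ is the blow-up of $\PP^2$ at $8$ points in "general position", defined over $\R$ and stable under complex conjugation. First I would recall that the possible conjugation actions on the Picard lattice (equivalently, on the set of $240$ lines, i.e.\ the exceptional curves of the first kind) have been classified — for instance by Wall, or as in Koll\'ar's and Russo's work on real Del Pezzo surfaces — and that for each such real form one computes $X(\R)$ either directly or via the equivariant topology of the blow-up. The coarse invariant is the number of connected components and, for each component, its genus (and orientability); these are read off from the Comessatti characteristic and from whether conjugation acts on a given real exceptional configuration preserving orientation or not.

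The key steps, in order: (1) reduce to counting, for each real form, the Euler characteristic $\chi(X(\R))$ and $H_1(X(\R);\Z/2)$, using that $\chi(X(\R)) = \chi(X(\C)) - 2\cdot(\text{number of pairs of conjugate complex points blown up}) \pmod{}$-type bookkeeping, or more robustly the identity $\chi(X(\R)) \equiv \chi(X(\C)) \pmod 2$ combined with the effect of equivariant blow-ups (blowing up a real point adds one $\PP^2(\R)$-summand, i.e.\ decreases $\chi$ by $1$; blowing up a conjugate pair does not change the diffeomorphism type of $X(\R)$); (2) start from the real forms of the degree-$1$ weak del Pezzo as double covers $q\colon X\to Q$ of the quadric cone branched over the vertex and a sextic $B$, and observe $X(\R)$ is determined by $B(\R)\subset Q(\R)$: each oval of $B$ separates $Q(\R)\cong S^2$ into regions, and $X(\R)$ is the double cover of the "inside" regions glued along $B(\R)$, which one sees is an orientable surface when a suitable parity condition holds (here the covering involution $z\mapsto -z$ over a region bounded by ovals); (3) enumerate the possible real branch loci: $B(\R)$ can have $1,2,3,4$ ovals (nested or not) or be empty, yielding respectively $\PP^2(\R)\sqcup p\SSS$, and degenerate/non-generic oval configurations give the Klein bottle and the connected-sum cases; (4) cross-check each candidate against the complex invariant $\chi(X(\C)) = 3 + 8 = 11$, hence $\chi(X(\R))$ is odd, which immediately forces an odd number of $\PP^2(\R)$-summands in the non-orientable part and is consistent with every entry of the list; (5) verify no further cases occur by a dimension/deformation argument — all real forms are obtained, since by Theorem~\ref{teo:generalizedbrusotti}-type results (or directly, Del Pezzo surfaces of degree $1$ form a connected family) every lattice-theoretic real form is realized, and conversely each listed manifold arises.

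I expect the main obstacle to be step (2)–(3): carefully determining the diffeomorphism type of the double cover $X(\R)\to Q(\R)$ from the configuration of the real branch curve, in particular getting the orientability right and seeing exactly when one obtains a Klein bottle versus $\#^3\PP^2(\R)\sqcup\SSS$ versus $\#^{2p+1}\PP^2(\R)$. This is a hands-on but delicate analysis of how the sheets of the cover are glued along $B(\R)$ and across the image of the vertex; the cleanest route is probably to combine it with the Smith–Thom and Comessatti inequalities (the total $\Z/2$-Betti number of $X(\R)$ is bounded by that of $X(\C)$, with equality characterizing the $(M)$- and $(M-1)$-cases) to pin down $b_*(X(\R);\Z/2)$ for each form and then match with the possible component structures. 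Alternatively, and perhaps more economically for a paper, one simply cites the classification of topological types of real minimal rational and Del Pezzo surfaces (Comessatti, Manin, Iskovskikh, and for degree $1$ specifically the tables in the real Del Pezzo literature), and checks that blowing up real points — the only operation changing the topology — keeps one inside the stated list; this is the route I would actually take here, relegating the lattice enumeration to a reference.
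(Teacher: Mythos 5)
Your final recommendation --- to simply cite the known classification of topological types of real smooth Del Pezzo surfaces --- is exactly what the paper does: its entire proof is the one-line reference to Degtyarev--Itenberg--Kharlamov \cite{DIK}. Your longer sketch via the double cover of the quadric cone and the Comessatti/Smith--Thom bookkeeping is a reasonable outline of how that classification is actually established (modulo small slips, e.g.\ the genus-$4$ branch curve can have up to $5$ real components by Harnack, and not every real form is a blow-up of $\PP^2$ over $\R$), but none of it is needed to match the paper.
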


\begin{proof} It is the well-known classification of real smooth Del
Pezzo surfaces,
see e.g. \cite{DIK}.
\end{proof}

\begin{lem}[Local] Consider a real singular point of a surface $X$ of type
$A^+_\mu$, of local equation
$z^2 = f (x,y)$ where $f$ vanishes at the origin. Then for each case
$\mu\in\{1,2,3\}$, there exists local smoothings $X_{\varepsilon}$
with equation $ z^2
= f_\varepsilon (x,y)$, such that $X_{\varepsilon}(\R)$ is
represented by one of the
Figures~\ref{fig:a1}, \ref{fig:a2}, or \ref{fig:a3}.

\begin{figure}[htbp]
\begin{center}
     \epsfysize=2.5cm
    \epsfbox{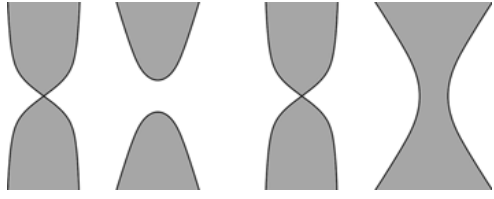}
\caption{The {\em cut} and the {\em cylinder} smoothings  of the node
$A^+_1$.}\label{fig:a1}
\end{center}
\end{figure}

\begin{figure}[htbp]
\begin{center}
     \epsfysize=2.5cm
    \epsfbox{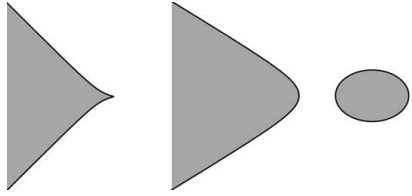}
\caption{The {\em +sphere} smoothing  of the cusp $A^+_2$.}\label{fig:a2}
\end{center}
\end{figure}

\begin{figure}[htbp]
\begin{center}
     \epsfysize=2.5cm
    \epsfbox{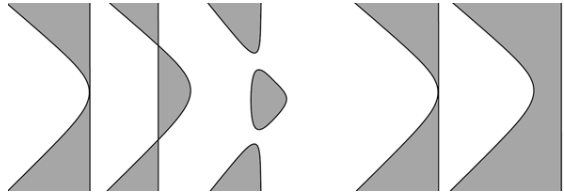}
\caption{The {\em cut+sphere} and the {\em cylinder} smoothings  of the tacnode
$A^+_3$.}\label{fig:a3}
\end{center}
\end{figure}

\end{lem}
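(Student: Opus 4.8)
The plan is to reduce the germ to a polynomial normal form, write down for each $\mu$ an explicit deformation $f_\varepsilon$ of the branch curve $\{f=0\}$, and then read off the topology of the real locus of $\{z^2=f_\varepsilon\}$ from the way $f_\varepsilon$ partitions the $(x,y)$-plane into $\{f_\varepsilon>0\}$, $\{f_\varepsilon<0\}$ and $\{f_\varepsilon=0\}$.

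First I would pin down which germs $f$ can occur, and here is where care is needed. Complex-analytically $\{z^2=f\}$ is the double cover of $(\C^2,0)$ branched along $\{f=0\}$, so a point of type $A^+_\mu$ with $\mu\le 3$ forces $\{f=0\}$ to be an $A_\mu$ plane-curve singularity; but what distinguishes $A^+_\mu$ from $A^-_\mu$ is the \emph{sign} of $f$, not merely its zero set (note that $f$ and $-f$ have the same zero set yet inequivalent double covers). Working with $f$ up to a real-analytic change of $x,y$ and a rescaling $z\mapsto u(x,y)z$ by a positive unit, and comparing with the defining equations $x^2\pm y^2-z^{\mu+1}=0$, one obtains the normal forms: $f\sim x^2+y^2$ or $x^2-y^2$ when $\mu=1$ (both present the cone, consistently with $A^+_1\cong A^-_1$; for $\mu=1$ the higher-order terms are absorbed, the surface being a quadric cone); $f\sim y^3-x^2$ when $\mu=2$ (the opposite sign $x^2-y^3$ presenting $A^-_2$); and $f\sim y^4-x^2$ when $\mu=3$ (while $x^2-y^4$ and $x^2+y^4$ both present $A^-_3$).

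Next, for each $\mu$ I would exhibit the smoothings inside the evident deformation of the normal form and compute the topology by the elementary principle that, for a polynomial $g$ with $\{g=0\}$ smooth, the real locus of $\{z^2=g\}$ is obtained by doubling the closed region $\{g\ge 0\}$ along its boundary curve $\{g=0\}$: hence it is a disc over each non-compact component of $\{g\ge 0\}$, a $2$-sphere over each component that is a closed topological disc bounded by an oval of $\{g=0\}$, and a cylinder over each band-like component bounded by two arcs of $\{g=0\}$. Concretely: for $\mu=1$ take $f_\varepsilon=f+\varepsilon$; for one sign of $\varepsilon$ the region $\{f_\varepsilon\ge 0\}$ splits into two discs and $\{z^2=f_\varepsilon\}$ is two disjoint discs (the \emph{cut}), while for the other sign $\{f_\varepsilon\ge 0\}$ is a band (an annulus in the acnode case, a strip in the crunode case) and the cover is a cylinder. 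For $\mu=2$ take $f_\varepsilon=y^3+ay+b-x^2$ with $(a,b)$ small and in the chamber where $y^3+ay+b$ has three real roots; then $\{f_\varepsilon\ge 0\}$ is one compact oval together with one non-compact piece, so the cover is $S^2\sqcup D^2$, the \emph{$+$sphere} smoothing. For $\mu=3$ take $f_\varepsilon=y^4+ay^2+c-x^2$: for $a=0$ and $c>0$ small the quartic is everywhere positive, $\{f_\varepsilon\ge 0\}$ is a single band, and the cover is a cylinder (the two local sheets of $A^+_3$ get tubed together); for $a<0$ and $0<c<a^2/4$ small the quartic has four real roots, $\{f_\varepsilon\ge 0\}$ is two non-compact pieces together with one compact oval, and the cover is $D^2\sqcup D^2\sqcup S^2$, the \emph{cut$+$sphere} smoothing.

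Since all parameters are small, everything happens inside a fixed small ball and $f_\varepsilon\to f$, so these are genuine local smoothings: indeed $\{z^2=f_\varepsilon\}$ is singular only over singular points of $\{f_\varepsilon=0\}$, and for parameters in the interior of a chamber of the discriminant the plane curve $\{f_\varepsilon=0\}$ (a smooth conic, a smooth cubic, or a smooth quartic, respectively) is smooth. The resulting real surfaces are precisely the ones drawn in Figures~\ref{fig:a1}, \ref{fig:a2} and \ref{fig:a3}. The only step requiring genuine attention is the sign bookkeeping of the second paragraph, which singles out $A^+_\mu$ from $A^-_\mu$; everything else is the two-dimensional picture just described.
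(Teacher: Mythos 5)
The paper states this lemma without supplying a written proof (the figures are presented as self-explanatory), so there is no argument of the authors' to compare against. Your proof is correct and supplies exactly the content that was left implicit. The two ingredients you isolate are indeed the right ones: first, the recognition that the double cover $\{z^2 = f\}$ has real locus the double of the closed region $\{f \geq 0\}$ along the real branch curve $\{f = 0\}$, so the topology is read off entirely from how $f_\varepsilon$ partitions the small $(x,y)$-disc; second, the sign bookkeeping that pins down the normal forms $f \sim x^2 \pm y^2$, $y^3 - x^2$, $y^4 - x^2$ for $A^+_1, A^+_2, A^+_3$ respectively (as opposed to the $A^-$ forms, which differ only in a sign of $f$ but give a genuinely different real double cover). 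Your explicit deformations $f + \varepsilon$, $y^3 + ay + b - x^2$ in the three-real-root chamber, and $y^4 + ay^2 + c - x^2$ in the two relevant chambers, and the resulting counts (two discs or a cylinder for $A^+_1$; $S^2 \sqcup D^2$ for $A^+_2$; $D^2 \sqcup D^2 \sqcup S^2$ or a cylinder for $A^+_3$) match exactly the \emph{cut}, \emph{cylinder}, \emph{$+$sphere} and \emph{cut$+$sphere} smoothings of Figures~\ref{fig:a1}--\ref{fig:a3}. No gap.
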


\subsection{Topology of connected components}

Let $X$ be a real Du Val Del Pezzo surface of degree 1. Recall that
$X'$ denotes the
singular elliptic surface obtained from $X$ by blowing up a smooth
point. We denote by
$\overline{n} \colon \overline {X'(\R)} \to X'(\R)$ the topological
normalization of
the real part and we assume that there is a connected component $M_0$ of
$\overline{X'(\R)}$ whose image by $\overline{n}$ contains at least
three singular
points of $X'$. Furthermore, we assume that the singular points are
only of type
$A_\mu^+$, with $A^+_\mu$ globally separating for $\mu$ odd.

Let $M_j$, $j = 1,\dots c$ be the other components of
$\overline{X'(\R)}$ such that
$\overline{n}(M_j)$  and  $\overline{n}(M_0)$ intersect (in some singular
point of $X'(\R)$). Any singular point $A^+_\mu$ of $M_0$ with $\mu$ odd
is globally separating,
while the ones with
$\mu$ even are not, thus in particular the number
$c$ satisfies $1 \leq c \leq \#\{P  \in \overline{n}(M_0)| 
P {\it of \  type \ } A_\mu^+,\ \mu \textrm{
odd}\}$.

   Let's denote by $M_\infty$ the connected component of
$\overline{X'(\R)}$ which meets
the section at infinity, i.e.,
$\overline{n}(M_\infty)\bigcap\Sigma_\infty \ne \emptyset$. In the
proof of the main
theorem we will often use the distinction between the cases $M_\infty
= M_0$ and
$M_\infty = M_j$ for some $j \ne 0$.

\begin{lem}\label{lem:klein} The component $M_\infty \subset
\overline{X'(\R)}$ of the topological normalization is a Klein bottle
unless the
elliptic fibration has two white returns (see Table~\ref{tab:euler}).
In the latter
situation, $\overline{X'(\R)}$ contains at most another component
which  is then a
sphere.
\end{lem}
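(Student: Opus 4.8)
The plan is to combine the two descriptions of $X'$ recalled in Section~\ref{sec:sevenconfig}: the real ramified double cover $X'\to\F_2$ with branch locus $\Sigma_\infty\cup B$, and the real elliptic fibration $\psi\colon \overline{X'(\R)}\to\PP^1(\R)\cong S^1$ obtained by composing with the ruling $p\colon\F_2\to\PP^1$. Roughly: the double cover gives the orientability type of $M_\infty$, while the elliptic fibration controls its Euler characteristic.

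First I would extract the orientability statement from the double cover. One has $\Sigma_\infty+B\equiv 2L$ with $L=2\Sigma_\infty+3F$ (the unique solution, pinned down by $B$ being a trisection disjoint from $\Sigma_\infty$ and by $-K_{X'}=$ the elliptic fibre class), so $X'(\R)$ is the $L$-twisted double of the closed region $R=\{\phi\ge 0\}\subset\F_2(\R)$, where $\phi$ is the defining section; note $\F_2(\R)$ is a torus because $\Sigma_\infty^2=-2$ is even. The component $M_\infty$ is (a component of the topological normalization of) the twisted double of the piece $R_\infty$ of $R$ adjacent to $\Sigma_\infty(\R)$. Since $\Sigma_\infty$ is a section, $\Sigma_\infty(\R)$ is a full boundary circle of $R_\infty$, and it is an \emph{odd} cycle for the real line bundle $L$: indeed $L\cdot\Sigma_\infty=(2\Sigma_\infty+3F)\cdot\Sigma_\infty=-4+3=-1$. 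Hence the twisting along $\Sigma_\infty(\R)$ is the M\"obius bundle, a neighbourhood of (a lift of) $\Sigma_\infty(\R)$ inside $M_\infty$ is a M\"obius band, and $M_\infty$ is non-orientable. As we double along a union of circles, $\chi(M_\infty)=2\chi(R_\infty)$, so $M_\infty$ is the Klein bottle as soon as $R_\infty$ retracts to a circle, and it is a $2$-sphere when $R_\infty$ is a disc.

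Second, I would use the elliptic fibration to decide which alternative occurs, i.e. to control the circle $O_\infty(t):=\psi^{-1}(t)\cap M_\infty$. Because $\Sigma_\infty\cap B=\emptyset$, the branch point that $\Sigma_\infty$ contributes to each real fibre is real and is a smooth point of the branch locus, so $O_\infty(t)$ is a genuine circle, never a point, for every $t\in S^1$, and $\psi|_{M_\infty}$ is a circle bundle away from the finitely many critical values. Running through Table~\ref{tab:euler}, the critical values over which $M_\infty$ changes are of two kinds: those where $O_\infty$ recombines with a second oval through a real node — locally a Morse saddle, so $M_\infty$ merely absorbs a pair of pants and a cylinder and stays circle-fibred over its arc — and the \emph{white returns}, where an oval disjoint from $\Sigma_\infty$ contracts to an isolated real point, locally a strict extremum of $\psi$ capping a circle with a disc; the degenerations coming from the $A_\mu^{\pm}$ points of $X'$ are handled after passing to the topological normalization and again do not affect $O_\infty$. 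Thus $R_\infty$ retracts to a circle, and $M_\infty$ is the Klein bottle, unless two white returns close $R_\infty$ off into a disc, in which case $M_\infty$ and any further surviving region of $R$ must be spheres, and no additional component can survive — the last assertion.

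The main obstacle will be the second step: making the passage through Table~\ref{tab:euler} airtight, i.e. checking case by case that no admissible real singular fibre — in particular those arising at the $A_\mu^{\pm}$ points, where the topological normalization must be invoked — can break the circle-bundle structure of $\psi|_{M_\infty}$ except through white returns, and that precisely two white returns are needed to reduce $R_\infty$ to a disc. (As a cross-check, the non-orientability of $M_\infty$ also follows by taking a global smoothing as in Theorem~\ref{teo:generalizedbrusotti} with cylinder smoothings along the $A_\mu^+$ points lying on $M_\infty$ and comparing with Proposition~\ref{sec:dps1}, since the Klein bottle is the only non-orientable $S^1$-bundle over $S^1$.)
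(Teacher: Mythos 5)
Your first step — showing that $M_\infty$ is non-orientable because the real line bundle $L$ with $2L\equiv\Sigma_\infty+B$, $L=2\Sigma_\infty+3F$, has $L\cdot\Sigma_\infty=-1$, so that $L|_{\Sigma_\infty(\R)}$ is the M\"obius bundle and a neighbourhood of the lift of $\Sigma_\infty(\R)$ in $M_\infty$ is a M\"obius band — is a nice, valid observation, and it is in fact more explicit than what the paper offers: the paper simply asserts ``if $\eta$ is continuous then $M_\infty=\KKK$'' (where $\eta(x)=\inf\{y: \{x\}\times[y,\infty)\subset\F_{2+}\}$) without detailing the non-orientability. Your identification of the white returns as the only critical values that can break the circle-bundle structure of $p\vert_{M_\infty}$ matches the paper's observation that a white return is the only type of root of $\Delta$ that can make $\eta$ discontinuous.

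However, your treatment of the exceptional case is wrong, and this is not a gap you can close along the route you sketch. You claim that two white returns ``close $R_\infty$ off into a disc, in which case $M_\infty$ \dots must be a sphere''. The opposite is true. At a white return the lower edge $\eta$ of $R_\infty$ \emph{jumps downward} (the bounding oval disappears and the region of positivity extends further), so $R_\infty$ does not shrink to a disc; it picks up extra topology. The paper handles the case of a second white return by exhibiting a second component $D$ of $B(\R)$ disjoint from the component $D'$ carrying the two white return branches, and then producing four independent $1$-cycles in $M_\infty$ (the preimages of $\Sigma_\infty$, of $D$, and of two arcs joining $D'$ to $\Sigma_\infty$ and $D'$ to $D$), so that $b_1(M_\infty)\ge 4$ — in particular $M_\infty$ is certainly not a sphere. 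The conclusion of the lemma (``at most one further component, necessarily a sphere'') is then \emph{not} read off from the fibration picture at all: it comes from passing to the corresponding component of $\overline{X(\R)}$, noting $b_1\ge 3$ after blowing down the exceptional circle, and invoking the classification of real smooth degree-$1$ Del Pezzo surfaces via the generalized Brusotti theorem (Theorem~\ref{teo:generalizedbrusotti}, Lemma 4.4 of \cite{CM08}, and Proposition~\ref{sec:dps1}), which forces $\overline{X(\R)}$ to be $\#^{3}\mathbb{P}^2(\R)\sqcup\SSS$ or $\#^{2p+1}\mathbb{P}^2(\R)$. That global-smoothing-plus-classification step is the essential ingredient you are missing; without it there is no way to constrain the remaining components of $\overline{X'(\R)}$, and the ``airtight passage through Table~\ref{tab:euler}'' you postpone would in any case be aimed at proving the false statement that $R_\infty$ becomes a disc.
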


\begin{proof}

If the fibre of the double covering $q' \colon X' \to \F_2$ over a real point
$P$  contains a real point, we shall say that $P$ belongs to the
region of positivity,
which we denote by $\F_{2+}$.

The section $\Sigma_\infty$ is part of the branch locus and is
bilateral in $\F_2$.

Consider $U := \F_2\setminus \Sigma_\infty$ which is an oriented
$\bbA^1$-bundle over $\PP^1$, and indeed homeomorphic to $\PP^1_{\R} \times
\R$. Hence we  take corresponding coordinates $(x,y) \in \PP^1_{\R} \times
\R$ for the points of $U$.

We may assume without loss of generality that $(x,y)
\in \F_{2+}$ for $y >> 0$.

Consider now the function $\eta \colon \PP^1(\R) \to
\R$, $\eta(x) : = \inf \{y \  |  \ \{x\}\times [y,\infty[ \subset \F_{2+} \}$.
Therefore,  if
$\eta$ is a continuous function on $\PP^1(\R)$, then we have $M_\infty =
\KKK$.

For further use, we notice that:
\begin{lem}\label{lem:monoton} Let $M \subset \overline{X'(\R)}$ be a connected
component of the topological normalization of $X'(\R)$, and consider
$x$ as a function on the boundary of $M$: then the number of changes
of monotonicity of
$x$ is even.
\end{lem}

Let $\Delta(x)$ be the discriminant of the elliptic fibre over $x$ (i.e., the
discriminant of the degree three polynomial in $y$ whose zero set is
the trisection).
In view of Table~\ref{tab:euler} (go two pages ahead), the only root of
$ \Delta(x)$ which can break the continuity of $\eta$ corresponds to
a white return
($A_0,e=-1$).

If the lower part of the white return branch continues and meets as
first critical
point of $p\colon B(\R) \to \PP^1(\R)$ a point which contributes one change of
monotonicity of $x$  (that is, a black return, or a black node, or a
tangent node, or a
transversal cusp or tacnode), then we can topologically deform to the
case where
$\eta$ is continuous. The flex and the tangent cusp are clearly
irrelevant and if the
first met critical point is a white node, we can perform a cut
smoothing and pass to
the next critical point. The only obstacle is then the case when we
meet another white
return singularity on the branch curve. In this case, one 
sees easily that there is another component $D$ of $B(\R)$ disjoint
from the   component $D'$ containing the white return branches,
hence $b_1(M_\infty)\geq 4$ (take the 4 cycles respective inverse
images of the section  at infinity $\Sigma_{\infty}$, of $D$, and 
 of two segments, one joining $D'$ with $\Sigma_{\infty}$, the other
joining $D'$ with $D$). Recall
that  the topological normalization
$\overline{X(\R)}$
of the real Del Pezzo surface $X$ can be realised by a global
smoothing of $X$, see
\cite[Lemma~4.4 and Theorem~4.3]{CM08}.  Thus the component of
$\overline{X(\R)}$
corresponding to $M_\infty$ has $b_1 \geq 3$ and, by \ref{sec:dps1},
either we have
$\overline{X(\R)} =
\#^{3}\mathbb{P}^2(\R) \sqcup  \SSS$ or $\overline{X(\R)} =
\#^{2p+1}\mathbb{P}^2(\R)$ for some $p = 1,\dots,4$.
\end{proof}

\begin{rem} More generally, by \ref{sec:dps1}, the real part of any
global smoothing
$X'_\varepsilon$ of $X'$, including the case when
$X'_\varepsilon(\R) = \overline{X'(\R)}$, is diffeomorphic to one of
the surfaces in
the following list,
\begin{itemize}
\item $\KKK \sqcup p \SSS,\  p=1,\dots,4$;
\item $\KKK \sqcup \KKK$;
\item $\KKK\#\KKK \sqcup  \SSS$;
\item $\#^{q}\KKK,\  q = 1,\dots,5$.
\end{itemize}
\end{rem}

\begin{lem}\label{topotype}  Let $X$ be a real Du Val Del Pezzo
surface of degree one,
and let $X'$ be the corresponding rational elliptic surface. Let
$X'_\varepsilon$ be a global smoothing of $X'$. Then we have the
following estimates
for the  Betti numbers $$ b_i ( X'_\varepsilon(\R) ): =  rank  H_i
(X'_\varepsilon(\R)
, \Z/2)$$
\begin{itemize}
\item $b_0(X'_\varepsilon(\R))\geq 3 \Rightarrow b_1(X'_\varepsilon(\R)) = 2$.
\item $b_0(X'_\varepsilon(\R))\geq 2 \Rightarrow
b_1(X'_\varepsilon(\R)) \leq 4$.
\item In any case, $b_0(X'_\varepsilon(\R))\leq 5$.
\end{itemize}
\end{lem}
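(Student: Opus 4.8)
The plan is to derive all three estimates from the classification of the real parts of global smoothings of $X'$ given in the Remark preceding the statement, combined with the structure of $X'$ as a double cover of $\F_2$ branched along $\Sigma_\infty \sqcup B$. By Theorem~\ref{teo:generalizedbrusotti} and \cite[Lemma~4.4]{CM08}, any such $X'_\varepsilon(\R)$ is diffeomorphic to one of $\KKK \sqcup p\SSS$ ($p=1,\dots,4$), $\KKK\sqcup\KKK$, $\KKK\#\KKK\sqcup\SSS$, or $\#^q\KKK$ ($q=1,\dots,5$), since $X'$ is obtained from a Del Pezzo surface of degree $1$ by blowing up a smooth point (which replaces one $\mathbb{P}^2(\R)$ summand by a $\KKK$ summand in Proposition~\ref{sec:dps1}). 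Reading off the Betti numbers $(b_0,b_1)$ from this list gives exactly: $(1,2),(2,2),(3,2),(4,2),(5,2)$ for the first family; $(2,4)$ for $\KKK\sqcup\KKK$; $(2,3)$ for $\KKK\#\KKK\sqcup\SSS$ (here $b_1(\KKK\#\KKK)=3$ over $\Z/2$); and $(1,q+1)$ for $\#^q\KKK$, i.e. $(1,2),\dots,(1,6)$. Inspecting this finite list yields all three implications directly: whenever $b_0\geq 3$ the only possibilities are $\KKK\sqcup p\SSS$ with $p\geq 2$ or $\KKK\#\KKK\sqcup\SSS$, and all of these have $b_1=2$; whenever $b_0\geq 2$ the surfaces are $\KKK\sqcup p\SSS$, $\KKK\sqcup\KKK$, $\KKK\#\KKK\sqcup\SSS$, for which $b_1\in\{2,3,4\}$; and in all cases $b_0\leq 5$, attained only by $\KKK\sqcup 4\SSS$.

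The one point that needs a genuine argument, rather than mere bookkeeping, is why the list of \emph{smooth} Del Pezzo surfaces of degree $1$ in Proposition~\ref{sec:dps1} really does transfer, after blowing up a smooth real point, to the list in the Remark, and hence governs $\overline{X'(\R)}$ itself (the case $X'_\varepsilon(\R)=\overline{X'(\R)}$). For this I would argue exactly as in the proof of Lemma~\ref{lem:klein}: by \cite[Lemma~4.4 and Theorem~4.3]{CM08} the topological normalization $\overline{X(\R)}$ of the Du Val surface $X$ is realized as the real part of a \emph{global} smoothing $X_\varepsilon$, whose topology is given by Proposition~\ref{sec:dps1}; blowing up the (smooth, real) point to pass from $X$ to $X'$ performs a connected sum with $\mathbb{P}^2(\R)$ on the component of $\overline{X(\R)}$ containing that point, replacing an $\SSS$ by $\mathbb{P}^2(\R)$ or a $\#^a\mathbb{P}^2(\R)$ by $\#^{a+1}\mathbb{P}^2(\R)$; since $X$ of degree $1$ has $-K_X$ with $K_X^2=1$, the point lying on $q^{-1}(\text{vertex})$ always lies on a component that acquires a $\mathbb{P}^2(\R)$ summand, so each $\mathbb{P}^2(\R)$ in Proposition~\ref{sec:dps1} becomes a $\KKK$ after the blow-up, giving precisely the Remark's list. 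The same reasoning applies to an arbitrary global smoothing $X'_\varepsilon$ of $X'$, since it is a global smoothing of the blow-up of a global smoothing of $X$.

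The main obstacle, then, is not in the final combinatorial extraction but in being careful about which connected component of $\overline{X(\R)}$ receives the extra crosscap, and in checking that $b_1(\KKK\#\KKK;\Z/2)=3$ and $b_1(\#^q\KKK;\Z/2)=q+1$ so that the $(b_0,b_1)$ table above is correct; these are standard facts about connected sums of non-orientable surfaces ($H_1(\#^q\KKK;\Z/2)\cong(\Z/2)^{2q}$ would be wrong — the right statement is $b_1=q+1$ because $\#^q\KKK=\#^{2q}\mathbb{P}^2(\R)$ has $b_1=2q$ over $\Z/2$; one must recompute). I would double-check this last arithmetic explicitly, since the implication $b_0\geq 3\Rightarrow b_1=2$ is exactly the delicate endpoint: it forces any component of $\overline{X'(\R)}$ meeting three or more singular points into a very rigid position, and is precisely what will be exploited in the subsequent sections to exclude the seven configurations.
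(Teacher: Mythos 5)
Your approach is the intended one: the lemma is read off directly from the list of topological types for $X'_\varepsilon(\R)$ given in the Remark preceding it, and the paper gives no further argument. But your Betti-number bookkeeping contains errors, and the final paragraph is internally inconsistent. With $\Z/2$-coefficients one has $b_1(\#^l\PP^2(\R)) = l$, hence $b_1(\KKK\#\KKK) = b_1(\#^4\PP^2(\R)) = 4$, not $3$, and $b_1(\#^q\KKK) = b_1(\#^{2q}\PP^2(\R)) = 2q$, not $q+1$. Your closing parenthesis first asserts that $H_1(\#^q\KKK;\Z/2)\cong(\Z/2)^{2q}$ ``would be wrong,'' then gives the (correct) justification $b_1 = 2q$, and yet concludes $b_1 = q+1$; these three statements cannot all stand. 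The corrected table is: $\KKK\sqcup p\SSS$ gives $(b_0,b_1)=(p+1,2)$ for $p=1,\dots,4$; $\KKK\sqcup\KKK$ gives $(2,4)$; $\KKK\#\KKK\sqcup\SSS$ gives $(2,4)$; and $\#^q\KKK$ gives $(1,2q)$ for $q=1,\dots,5$. None of the three asserted estimates is affected — indeed with the correct values the bound $b_0\geq 2\Rightarrow b_1\leq 4$ is seen to be sharp, attained both by $\KKK\sqcup\KKK$ and by $\KKK\#\KKK\sqcup\SSS$ — but you should fix the arithmetic before relying on the table in the later exclusion arguments, where these inequalities are applied at their boundary.

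Your discussion of why the Remark's list is correct (which component of $\overline{X(\R)}$ acquires the extra crosscap under the blow-up) is on the right track and is not spelled out in the paper either; the essential point is that the blown-up point is the preimage of the vertex of the cone, which lies in the region of positivity over the vertex and hence on the non-spherical component in each case of Proposition~\ref{sec:dps1}.
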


\subsection{Exclusion of $A^+_3 + A_2^+ + 2A_1^+$ and $A^+_3 + 3A_1^+$}

For each node of $\overline{n}(M_0)$ connecting $\overline{n}(M_0)$ with some
$\overline{n}(M_j)$, we choose the cut smoothing if this point is the
only singular
point on $\overline{n}(M_j)$. Otherwise, we choose the cylinder
smoothing. We do the
+sphere smoothing for the cusp. For the tacnode, connecting
$\overline{n}(M_0)$ with
some $\overline{n}(M_j)$, we choose the cut+sphere smoothing if this
point is the only
singular point on
$\overline{n}(M_j)$ or if we are in the last two cases in the next
list. Otherwise, we
choose the cylinder smoothing.  Recalling that
$b_1\left(\overline{X'(\R)}\right) \geq 2$, we obtain the following
inequalities for
the Betti numbers of $X'_\varepsilon(\R)$. The different cases are
distinguished by the
number $c$ defined above.

\begin{itemize}
\item[(\ref{A3+A2+2A1})]
$A^+_3 + A_2^+ + 2A_1^+$
\begin{itemize}
\item[$c = 3$:] $b_0 \geq 6$;
\item[$c = 2$:] $b_0 \geq 3$, and $b_1 \geq 4$;
\item[$c = 1$:] $b_0 \geq 2$, and $b_1 \geq 6$.
\end{itemize}

\medskip
\item[(\ref{A3+3A1})]
$A^+_3 + 3A_1^+$
\begin{itemize}
\item[$c = 4$:] $b_0 \geq 6$;
\item[$c = 3$:] $b_0 \geq 3$, and $b_1 \geq 4$;
\item[$c = 2$:] (cut+sphere smoothing for the tacnode) $b_0 \geq 3$, and
$b_1 \geq 4$.
\item[$c = 1$:] (cut+sphere smoothing for the tacnode) $b_0 \geq 2$, and
$b_1 \geq 6$.
\end{itemize}

\end{itemize}

In each case, these inequalities  contradict Lemma~\ref{topotype}. Thus cases
(\ref{A3+A2+2A1}) and (\ref{A3+3A1}) are excluded.

\section{The Euler number on an elliptic fibration}\label{sec:euler}

Recall that $X'$ is a singular surface obtained from the singular
degree 1 Del Pezzo
surface $X$ by blowing up a smooth point. It is a ramified double
covering of the
Hirzebruch surface $\F_2$ whose branch locus is the union
$\Sigma_\infty \cup B$ where
$B$ is a trisection of the ruling $p \colon \F_2 \to \PP^1$. The
composition $X' \to
\F_2 \to \PP^1$ is a real elliptic fibration and $\Delta(x)$ denotes
the discriminant
of the elliptic fibre over $x$ (i.e., the discriminant of the degree
three polynomial
in $y$ whose zero set is the trisection). Table~\ref{tab:euler} gives a
local topological
description of the fibration over a neighbourhood of a real zero of $\Delta$,
   in terms of two basic numerical invariants, namely the multiplicity
of the zero of
$\Delta$, and the Euler number of the real part of the
singular fibre of the elliptic surface.
The table considers only the singular points that we have to deal
with, and introduces a
name for each case, which will be used in the course of the forthcoming proofs.
Observe finally that, in drawing as black the region of positivity,
we have used the
convention introduced in Lemma \ref{lem:klein}. Finally, a point of type
$A_0$ is here a smooth point of $B$ which is a critical point for the
restriction of $p$ to
$B$.


\begin{table}
\setlength{\doublerulesep}{\arrayrulewidth}
\begin{tabular}{|| c | c | c | c | c ||}
\hline\hline
\rule[-8pt]{0pt}{25pt} Type & Fibre type & Picture & Multiplicity of
$\Delta$ & Euler
number $e$ \\
\hline
\rule[-8pt]{0pt}{45pt} $A_0$ &  black return &   \epsfysize=32pt
\epsfbox{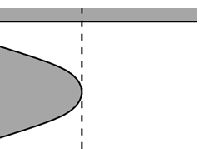}  & $1$ & $1$  \\
\hline
\rule[-8pt]{0pt}{45pt} $A_0$ & white return  &  \epsfysize=32pt
\epsfbox{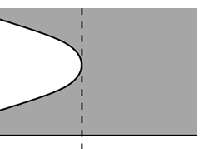} & $1$ & $- 1$ \\
\hline
\rule[-8pt]{0pt}{45pt} $A_0$ & flex &  \epsfysize=32pt \epsfbox{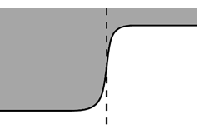}  &
$2$  & $0$  \\
\hline
\rule[-8pt]{0pt}{45pt} $A_1^+$ & black node & \epsfysize=32pt
\epsfbox{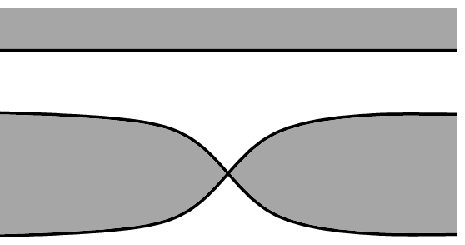} & $2$ & $1$  \\
\hline
\rule[-8pt]{0pt}{45pt} $A_1^+$ & white node & \epsfysize=32pt
\epsfbox{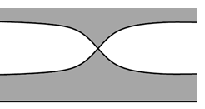} & $2$ & $-1$  \\
\hline
\rule[-8pt]{0pt}{45pt} $A_1^+$ & tangent node & \epsfysize=32pt
\epsfbox{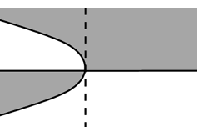} & $3$ & $0$  \\
\hline
\rule[-8pt]{0pt}{45pt} $A_2^+$ & transversal cusp & \epsfysize=32pt
\epsfbox{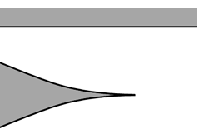} & $3$ & $1$  \\
\hline
\rule[-8pt]{0pt}{45pt} $A_2^+$ & tangent cusp & \epsfysize=32pt
\epsfbox{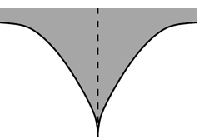} & $4$ & $0$  \\
\hline
\rule[-8pt]{0pt}{45pt} $A_3^+$ & tacnode & \epsfysize=30pt
\epsfbox{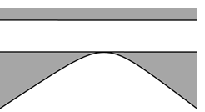} & $4$ & $1$\\
\hline\hline
\end{tabular}
\par\medskip
\caption{Singular points of the elliptic fibration
and contributions to
the Euler number.}\label{tab:euler}
\end{table}
\bigskip

We use now Table~\ref{tab:euler} in order to proceed with our case by case
exclusion.
\subsection{Exclusion of $A^+_3 + 2A_2^+$ and $3A^+_2 + A_1^+$}

\begin{itemize}

\item[(\ref{A3+2A2})]
$A^+_3 + 2A_2^+$,

Here $c=1$, and $\overline{n}(M_1),\overline{n}(M_0)$ meet in the tacnode
$A^+_3$. By doing the +sphere smoothing for each cusp and the
cut+sphere smoothing for
the tacnode, we get at least 5 connected components, hence a Klein
bottle and 4 spheres
by \ref{topotype}, and $\overline{X'(\R)} = \KKK
\sqcup \SSS$. We conclude that $e(X'(\R)) = 1$.

If $M_0 \ne M_\infty$ (thus $M_0=\SSS$ and $M_1=\KKK$), the cusps are
transversal.
Since the total multiplicity of $\Delta$ is 12, the fibration has at
most two real $A_0$
singular fibers. But any white return stays on
$\overline{n}(M_\infty)$.  Then on the
boundary of   $\overline{n}(M_0)$  the monotonicity of the function
$x$ has three
changes, a contradiction.

If $M_0 = M_\infty$, then the component $M_1=\SSS$ must contain a
black return (recall
that the two cusps belong to $\overline{n}(M_0)$). The existence of
the tacnode on
$\overline{n}(M_\infty)$ forces a white return. The contributions to
the multiplicity
of $\Delta$ are then $4 + 3+3+1+1=12$ which implies that the two
cusps are transversal.
The contributions to the Euler characteristic of $X'(\R)$ are then $1
+1+1+1-1=3$, a
contradiction.

Thus case (\ref{A3+2A2}) is excluded.

\medskip
\item[(\ref{3A2+A1})]
$3A^+_2 + A_1^+$,

Here also $c=1$ and the same argument as above shows that
\break
$\overline{X'(\R)} =
\KKK
\sqcup \SSS$ and $e(X'(\R)) = 1$.

If $M_0 \ne M_\infty$, the three cusps are transversal. By
Lemma~\ref{lem:monoton}, if
we had a white node, we would have  a black return on $\overline{n}(M_0)$, thus
$e(X'(\R))= 3 -1 +1=3$,
   a contradiction. If we did not have a white node, then $e(X'(\R))
\geq 3 + 0  -1=2$, a
contradiction again.

If $M_0 = M_\infty$, then the node on $\overline{n}(M_1)$ forces a
black return. Since
$M_1$ is a sphere, the contributions to the multiplicity of $\Delta$
impose that the
fibration has at most one real $A_0$ singular fiber and then that all
singular points
are of transversal type. Whence $e(X'(\R)) \geq 3 -1 +1= 3$, a contradiction.

Thus case (\ref{3A2+A1}) is excluded.

\end{itemize}

\section{The Euler number of a real singular Del Pezzo
surface}\label{sec:lasteuler}

Recall that $X'$ is a singular rational elliptic surface obtained
from the Del Pezzo
surface $X$ by blowing up a smooth point; and that a singular point
$A^+_\mu$ is real
analytically equivalent to
$ x^2 + y^2 - z^{\mu + 1} = 0,\ \mu \geq 1$.

Let us denote by $S' \to X'$ the minimal resolution of singularities.

\begin{df} In this paper, we define, for a real variety $X$, $\rho(X)$ to be the
Picard number of the surface $X$ {\em over
$\R$}. It must not be confused with the Picard number of the
complexification $X_\C$ of
$X$. We have always $\rho(X) \leq \rho(X_\C)$ but, generally,
$\rho(X) < \rho(X_\C)$.
\end{df}

\begin{lem}\label{lem:rho} Suppose that the singularities of $X$
(and then of
$X'$) are only of type $A^+_\mu$. Then, denoting by $\# A^+_\mu$
the number of singular points which are of type $A^+_\mu$,
we have:
$$ 2\rho(S') + e\left(S'(\R)\right) - 2\rho(X')  - e\left(X'(\R)\right) =
\sum_{\mu} \mu \cdot (\# A^+_\mu)\;.
$$
\end{lem}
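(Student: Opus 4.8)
The plan is to compare the surface $X'$ with its minimal resolution $S' \to X'$ by a purely local-to-global bookkeeping of two additive invariants: the Picard number (over $\R$) and the Euler characteristic of the real locus. The key point is that a resolution of an $A^+_\mu$ singularity is entirely local, so both $2\rho + e(\cdot(\R))$ change by a contribution that depends only on the (real) analytic type $A^+_\mu$, and these contributions add up over the finitely many singular points. So the identity to prove is equivalent to showing that a single $A^+_\mu$ point contributes exactly $\mu$ to the quantity $2\rho(S') + e(S'(\R)) - 2\rho(X') - e(X'(\R))$.

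First I would treat the Picard number. Resolving an $A_\mu$ singularity replaces the point by a chain of $\mu$ rational $(-2)$-curves. Over $\C$ this adds $\mu$ to the Picard number; I must check that for an $A^+_\mu$ point \emph{all} of these exceptional curves are defined over $\R$ (equivalently, that the Galois action on the exceptional chain is trivial), which is exactly what the normal form $x^2+y^2 = z^{\mu+1}$ gives: the singularity is real and the resolution can be performed over $\R$ with every component of the exceptional divisor real. Hence each $A^+_\mu$ point contributes $+\mu$ to $\rho(S') - \rho(X')$, i.e. $+2\mu$ to $2\rho(S') - 2\rho(X')$. (Here I use the convention that $\rho(X')$ is computed on the Du Val surface itself; since $X'$ has rational singularities, $\operatorname{Pic}$ behaves well and the pullback $\operatorname{Pic}(X')_\R \to \operatorname{Pic}(S')_\R$ is the inclusion of a summand complementary to the span of the exceptional curves.)

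Next I would compute the change in the Euler characteristic of the real locus. Near an $A^+_\mu$ point the real picture is governed by whether $\mu$ is even or odd: for $\mu$ odd, $\overline{X'(\R)}$ has two sheets through the point and $X'(\R)$ looks locally like two discs glued at a point (Euler number of the real germ $= 1$ after normalization bookkeeping), while for $\mu$ even it is a single sheet. In the minimal resolution the point is replaced by the real locus of the exceptional chain of $\mu$ $(-2)$-curves; each real $(-2)$-curve contributes an $S^1$'s worth (or a point, depending on how the real forms sit), and I would read off from the standard local models — or equivalently from the plumbing description of the $A_\mu$ chain — that the net change $e(S'(\R)) - e(X'(\R))$ localized at an $A^+_\mu$ point is precisely $-\mu$. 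Concretely: resolving pulls apart the real germ and glues in the real points of $\mu$ rational curves arranged in a chain, and the alternating count gives a change of $-\mu$ in every case ($\mu$ even or odd). Combining, the local contribution to $2\rho(S') + e(S'(\R)) - 2\rho(X') - e(X'(\R))$ is $2\mu - \mu = \mu$, and summing over all singular points — each of type $A^+_{\mu}$ for various $\mu$ — yields $\sum_\mu \mu\cdot(\# A^+_\mu)$, as claimed.

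The main obstacle is the second step: getting the sign and the exact value of the local Euler-characteristic jump right uniformly in $\mu$ and in the parity of $\mu$. This requires a careful description of $X'(\R)$ versus $S'(\R)$ in a neighbourhood of an $A^+_\mu$ point — in particular tracking which components of the exceptional $A_\mu$-chain carry real points and how the two local sheets of $\overline{X'(\R)}$ (for $\mu$ odd) reattach after resolution. I would organize this via the explicit real plumbing of the chain of $\mu$ $(-2)$-curves over $\R$, verifying the count $e(S'(\R)) - e(X'(\R)) = -\mu$ separately for $\mu$ odd and $\mu$ even, which is where the only genuine case analysis lies; the Picard-number part is then immediate from the reality of the whole exceptional divisor.
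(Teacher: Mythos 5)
Your two local contributions are both incorrect, and the fact that they happen to cancel to the right answer $\mu$ is a coincidence masking a genuine misunderstanding of the real geometry of the $A^+_\mu$ resolution. The paper's actual local computation is
$$
\rho(S') - \rho(X') = \sum_{\mu \textrm{ odd}} \Bigl(1 + \tfrac{\mu-1}{2}\Bigr) + \sum_{\mu \textrm{ even}} \tfrac{\mu}{2},
\qquad
e(S'(\R)) - e(X'(\R)) = -\#\{A_\mu,\ \mu \textrm{ odd}\},
$$
i.e.\ each $A^+_\mu$ point contributes $\tfrac{\mu+1}{2}$ (resp.\ $\tfrac{\mu}{2}$) to the \emph{real} Picard number jump for $\mu$ odd (resp.\ even), and $-1$ (resp.\ $0$) to the Euler-characteristic jump. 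Your claim that the contribution to $\rho(S') - \rho(X')$ is $\mu$ rests on the assertion that for an $A^+_\mu$ point ``the resolution can be performed over $\R$ with every component of the exceptional divisor real.'' This is false for the $A^+$ form $x^2+y^2 = z^{\mu+1}$: the Galois involution reverses the exceptional $A_\mu$-chain $E_1,\dots,E_\mu$, pairing $E_i$ with $E_{\mu+1-i}$. For $\mu$ odd only the middle curve is defined over $\R$ (and the $(\mu-1)/2$ conjugate pairs are disjoint from their conjugates, contributing nothing to the real locus); for $\mu$ even no component is real, and the only real point is the node where the two central conjugate curves meet. You appear to have in mind the \emph{split} form $A^-_\mu$ (equivalent over $\R$ to $uv = z^{\mu+1}$), where the whole chain is indeed real. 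Your Euler-characteristic claim $e(S'(\R)) - e(X'(\R)) = -\mu$ at each point is likewise wrong: the actual local change is $-1$ when $\mu$ is odd (a point replaced by a circle) and $0$ when $\mu$ is even (a point replaced by a point). Correcting both errors still gives $2\cdot\tfrac{\mu+1}{2} - 1 = \mu$ for $\mu$ odd and $2\cdot\tfrac{\mu}{2} + 0 = \mu$ for $\mu$ even, so the lemma does hold, but the argument needs the genuine case analysis on the parity of $\mu$ and the correct real structure on the exceptional chain — neither of which your proposal carries out.
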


\begin{proof} A local computation shows that
$$
\rho(S') - \rho(X') = \sum_{\mu \textrm{ odd}} \left(1 +\frac {\mu-1}
2\right) +
\sum_{\mu \textrm{ even}} \frac \mu 2
$$  and $e\left(S'(\R)\right) - e\left(X'(\R)\right) = - \# \{ A_\mu,\ \mu
\textrm{ odd}\}$.
\end{proof}

\begin{lem}\label{lem:rho-euler} Let $X$ be a real Du Val Del Pezzo
surface of degree
1. Suppose that $\rho(X) \leq 2$. Suppose moreover that
the singularities are only of type $A^+_\mu$, $\mu \in
\{1,2,3\}$. Then we get for the rational elliptic surface $X'$:
$$ e(X'(\R)) = (8 \quad \textrm{or} \quad 6 ) - \sum_{\mu = 1}^3 \mu
(\# A^+_\mu)\;.
$$
\end{lem}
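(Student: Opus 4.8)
The plan is to combine Lemma~\ref{lem:rho} with the known invariants of a rational elliptic surface and with the constraint $\rho(X)\le 2$. First I would recall that $X'$ is a rational elliptic surface obtained by blowing up a smooth point of the degree~$1$ Del Pezzo surface $X$, and that its minimal resolution $S'$ is a \emph{smooth} rational elliptic surface. For such a surface the complex Picard number is $\rho(S'_\C)=10$ and the topological Euler characteristic of the complex surface is $e(S'_\C)=12$. Over $\R$, the relevant identity is the Comessatti-type relation between $\rho(S')$, $e(S'(\R))$ and the corresponding complex quantities; more elementarily, for a smooth real rational surface one has the well-known equality $e(S'(\R)) = 2 - K_{S'}^2 + \text{(correction)}$, but the cleanest route is: since $S'$ is obtained from a relatively minimal rational elliptic surface (or a blow-down thereof) one knows that the quantity $2\rho(S') + e(S'(\R))$ is a birational invariant among smooth real rational surfaces dominating, and its value can be pinned down.

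Concretely, the key step is to show that $2\rho(S') + e(S'(\R)) = 24$ always, or rather splits into the two values giving $8$ or $6$ after subtracting. Here is the mechanism I expect: by Lemma~\ref{lem:rho},
$$
2\rho(X') + e(X'(\R)) = 2\rho(S') + e(S'(\R)) - \sum_\mu \mu\,(\#A^+_\mu)\,.
$$
Since $X'$ is $X$ blown up at a real smooth point, $\rho(X') = \rho(X)+1 \le 3$, and in fact $\rho(X')$ takes a controlled set of small values; simultaneously $e(X'(\R)) = e(X(\R)) + 1$ since blowing up a real point of a surface adds $1$ (replacing a point by an $\RP^1$, which changes $e$ by $-1+2-1$... no: blowing up a real point replaces a point by $\P^1(\R)\cong S^1$, changing $e$ by $e(S^1)-e(\mathrm{pt}) = 0-1 = -1$; but one must be careful — I would use instead that $e(X'(\R))$ and $\rho(X')$ are each determined once $X(\R)$ and $\rho(X)$ are). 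The real work is to compute $2\rho(S') + e(S'(\R))$. For $S'$ a smooth real rational elliptic surface we have $\rho(S'_\C)=10$; the number $\rho(S')$ ranges but $2\rho(S') + e(S'(\R))$ is constrained by the general fact (used again in Section~\ref{sec:lasteuler} via the Comessatti characteristic) that for a smooth real rational surface $Y$, $e(Y(\R)) \equiv K_Y^2 \pmod{?}$ and $\sum b_i(Y(\R)) \le \sum b_i(Y_\C)$. I would combine the equality $e(Y(\R)) = \rho(Y_\C) - 2\rho(Y) + 2$ valid for geometrically rational real surfaces (a consequence of the Comessatti/Lefschetz relations: $h^{1,1} = \rho_\C$, and real and complex conjugation act so that $e(Y(\R))$ equals the trace of conjugation, giving such a formula) — this yields $2\rho(S') + e(S'(\R)) = \rho(S'_\C) + 2 = 12$. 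That is not $24$; rechecking against the target $8$ or $6$: with $e(X'(\R)) = 8$ or $6$ and $2\rho(X') \le 6$, and $2\rho(X')+e(X'(\R)) - \sum\mu(\#A^+_\mu) = 12$, we would need $2\rho(X')+e(X'(\R))$ to be $12 + \sum\mu(\#A^+_\mu)$, inconsistent. So the honest bookkeeping is: $e(X'(\R)) = 12 - 2\rho(X') + \big(2\rho(S')+e(S'(\R)) - 12\big)$ rearranged — I will instead directly substitute the relation $e(S'(\R)) = 12 - 2\rho(S')$ (from $\rho(S'_\C)=10$ so $\rho_\C+2=12$) and $\rho(S') = \rho(X') + \sum_{\mu\text{ odd}}(1+\tfrac{\mu-1}2) + \sum_{\mu\text{ even}}\tfrac\mu2$ from the proof of Lemma~\ref{lem:rho}, getting
$$
e(X'(\R)) = 12 - 2\rho(X') - \#\{A_\mu : \mu \text{ odd}\}\,,
$$
after cancellation, and then using $e(X'(\R)) = e(X(\R)) - 1$ or the relation at the level of $X$.

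Thus the final step is purely numerical: I would show $\rho(X') \in \{2,3\}$ under the hypothesis $\rho(X)\le 2$ — indeed $\rho(X') = \rho(X)+1$ when the blown-up point is a real point, giving $\rho(X')\in\{2,3\}$, hence $2\rho(X')\in\{4,6\}$, hence $12 - 2\rho(X') \in \{8,6\}$ — and absorb the term $\#\{A_\mu:\mu\text{ odd}\}$ into the difference between $\sum_{\mu}\mu(\#A^+_\mu)$ as written in the statement versus what the cancellation produces. Carefully: from Lemma~\ref{lem:rho}, $e(X'(\R)) = 2\rho(S') + e(S'(\R)) - 2\rho(X') - \sum_\mu \mu(\#A^+_\mu)$; substituting $2\rho(S') + e(S'(\R)) = \rho(S'_\C) + 2 = 12$ (since $S'$ is a smooth real \emph{rational} surface with $\rho(S'_\C) = 10$) gives exactly $e(X'(\R)) = 12 - 2\rho(X') - \sum_\mu \mu(\#A^+_\mu) = (8 \text{ or } 6) - \sum_{\mu=1}^3 \mu(\#A^+_\mu)$, as claimed. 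The main obstacle I anticipate is justifying the identity $2\rho(S') + e(S'(\R)) = \rho(S'_\C)+2$ for the smooth real rational surface $S'$ — this is the real Lefschetz/Comessatti relation for geometrically rational surfaces and may need either a citation to \cite{DIK} or \cite{Co14} or a short argument via the Galois action on $H^2$; and secondarily, confirming $\rho(X') = \rho(X) + 1$, i.e., that the point of $X$ pulled back from the vertex of the cone (which is blown up to form $X'$) is a real smooth point, which follows from the construction recalled in Section~\ref{sec:sevenconfig}.
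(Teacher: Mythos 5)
Your final derivation is exactly the paper's: substitute the Comessatti-type identity $2\rho(S')+e(S'(\R))=12$ into Lemma~\ref{lem:rho} and use $\rho(X')=\rho(X)+1\in\{2,3\}$. The paper establishes the key identity by writing $e(S'(\R))+2\rho(S')=b_*(S'(\R))+2\lambda=b_*(S'(\C))=12$ using the Comessatti characteristic $\lambda$ and the relation $b_1(S'(\R))=\rho(S')-\lambda$ (citing Mangolte/Silhol) — which is precisely the Lefschetz/Comessatti argument you flag as the step needing justification, so your proposal matches the paper's proof both in outline and in the sole nontrivial input.
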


\begin{proof}

Denote by $\lambda$ the Comessatti characteristic of $S'$ given by
$ 2 \lambda = b_*(S'(\C)) - b_*(S'(\R))
$ (see \cite[Chap. I]{Si89}, and recall that our Betti numbers are taken
with coefficients $\Z/2$).

The nonsingular rational elliptic surface
$S'$ has total Betti number $b_*(S'(\C)) =  e(S'(\C)) = 12$.

Moreover, for a nonsingular
surface $S'$ with
$p_g(S') = 0$ and with $S'(\C)$ simply connected, we have
$ b_1(S'(\R)) = \rho(S') - \lambda
$, see  \cite{Ma97} or \cite{Si89}. Since however $ 2 b_1(S'(\R)) = b_*(S'(\R))
- e (S'(\R))$ we get

$ e\left(S'(\R)\right) + 2  \rho(S') =  b_*(S'(\R)) - 2 b_1(S'(\R)) +
2  \rho(S') =
   b_*(S'(\R)) +   2 \lambda = b_*(S'(\C)) = 12$.

By our hypothesis on the Picard number of the singular Del Pezzo
surface $X$, we have
   $2 \leq \rho(X') \leq 3$, thus the formula follows from Lemma~\ref{lem:rho}.
\end{proof}

\subsection{Exclusion of $2A^+_3 + A_2^+$,  $2A^+_2 + 2A_1^+$ and
$A^+_2 + 3A_1^+$}

In the first case, the branch curve has 2 irreducible components
which are rational.
Indeed one of them is smooth rational and the other has genus 1 and
one singular
point.  Furthermore the two irreducible components intersect in a
real point, thus the
real part of the branch curve is connected.  In the last two cases,
the branch curve
$B$ is irreducible, has genus 4, and has 4 singular points. Thus the
curve is rational
and its real part $B(\R)$ is connected.  It follows that every
connected component $M$
of the topological normalization has the property that
$$ \overline{n}(M) \cap \overline{n}(M_0) \neq \emptyset.$$

Hence $(c+1)$ is the number of connected components of the normalization.

\begin{itemize}
   \item[(\ref{2A3+A2})]
$2A^+_3 + A_2^+$,
$\sum \mu (\# A^+_\mu) = 8$ and $e(X'(\R)) = 0$ or $-2$ by
Lemma~\ref{lem:rho-euler} .

The total multiplicity of $\Delta$ is 12, thus there is at most one
fibre $A_0$ and the
contributions to the Euler characteristic are $1 +1+1+1$, or $1 +
1+1-1$, or $1+1+0$
when the cusp is tangent to a fibre. Thus $e(X'(\R))$ would be
greater than or equal to
2, a contradiction. Thus case (\ref{2A3+A2}) is excluded.
\end{itemize}

\medskip

\begin{itemize}
   \item[(\ref{2A2+2A1})]
$2A^+_2 + 2A_1^+$,
$\sum \mu (\# A^+_\mu) = 6$ and $e(X'(\R)) = 2$ or $0$.

Here, the number of components of $\overline{X'(\R)}$ such that
$\overline{n}(M_j)$  and  $\overline{n}(M_0)$ belong to the same
connected component of
$X'(\R)$  satisfies $1 \leq c \leq 2$.

Assume $c=1$, and do the cylinder smoothing for the nodes, and the
+sphere smoothing
for the cusps. We obtain $b_0 = 3$, and $b_1 \geq 4$, a contradiction.

Assume $c=2$, then there are two cases: $M_0 = M_\infty $ or $M_0 \ne
M_\infty$.  The
topological normalization has $ 3 = c+1$ components, hence
$\overline{X'(\R)} =
\KKK
\sqcup 2\SSS$.

Assume $M_0 \ne M_\infty$, then any cusp is transversal and yields a $(+1)$
contribution to the Euler number. For the component $M_1$, which is
distinct from $M_0$
and from $M_\infty$, we must have a black node, therefore on it there
is also a black
return. In order to get $e(X'(\R)) \leq 2$, there must be a white
return, and then we
should have a white node to make $e(X'(\R)) \leq 2$. But a white
return is necessarily
on $\overline{n}(M_\infty)$, and its existence implies the existence
of other critical
points at $\infty$, a contradiction.

Assume $M_0 = M_\infty$, consider the two components not at $\infty$,
$M_1$ and $M_2$. On them, a white node implies at least two black
returns, while a
black or tangent node implies at least one black return. Since
$\Delta$ has degree 12,
there are exactly two black returns (on each respective $M_j$) and
two black nodes (on
each respective $M_j$). At $\infty$, there are as critical points
only the singular
points, and these are transversal, whence we get 2 transversal cusps,
thus $e(X'(\R)) =
6$, a contradiction.

Thus the configuration $2A^+_2 + 2A_1^+$ does not exist.

\medskip
\item[(\ref{A2+3A1})]
$A^+_2 + 3A_1^+$,
$\sum \mu (\# A^+_\mu) = 5$ and $e(X'(\R)) = 3$ or $1$.

In this case, we have $1 \leq c \leq 3$.

If $c=1$ we have two components $M_0$, $M_1$ and  each node on
$\overline{n}(M_0)$ connects with
$\overline{n}(M_1)$. We do 3 cylinder smoothings and the +sphere
smoothing for the
cusp. We obtain $b_0 = 2$, and $b_1 \geq 6$, a contradiction.

If $c=2$ we have three components $M_0$, $M_1$, $M_2$ and  we let $M_1$ be the
component such that there are two nodes on
$\overline{n}(M_0)$ connecting with
$\overline{n}(M_1)$. We perform 2 cylinder smoothings at these nodes.
For the remaining
node connecting $\overline{n}(M_0)$ with $\overline{n}(M_2)$, we choose the cut
smoothing.  We do the +sphere smoothing for the cusp. This gives $b_0
= 3$, and $b_1
\geq 4$, a contradiction.

Assume $c=3$, and take the normalization. There are 4 components,
whence they are
$\overline{X'(\R)} = \KKK \sqcup 3\SSS$ with $\KKK = M_\infty$.
Notice in particular
that $e(X'(\R)) = 3$.

If $M_0 = M_\infty$, assume that we have a black node. For the corresponding
$\SSS$ component this requires a black return. For a white node, we
need at least two
black returns. For a tangent node, one needs one black return.
However, the number of
$A_0$ fibres is $\leq 3$, thus there is no white node, and we have
exactly 3 black
returns. The contribution to the Euler number is then $e(X'(\R)) \geq
4$. This is a
contradiction which excludes the case $M_0 = M_\infty$.

If $M_0 \ne M_\infty$,  the cusp is transversal hence it  contributes
1 to the Euler
number. Consider the two nodes not involving the component
$M_\infty$. Necessarily they
are black nodes since the other two types of nodes involve the
component $M_\infty$.
These singularities each involve a black return as before. We get a
contribution 5 to
$e(X'(\R)) = 3$. Hence the remaining node and return must contribute
twice a $(-1)$. A
$(-1)$ contribution is white and involves the component $M_\infty$.
But because the
white return gives for the boundary of $\F_{2+}\cap M_\infty$ some
$x$ for which the
degree is 3, and others for which it is 1, there must be another
critical point on
$\F_{2+}\cap M_\infty$, a contradiction.

Thus case (\ref{A2+3A1}) is excluded.
\end{itemize}

\QED {for theorem 1.3}

We end this section with the

\begin{proof}[Proof of Theorem~\ref{teo:main}]
First of all, the reduction  from the case of a geometrically
rational Du Val surface to the case of a Du Val Del Pezzo surface
of degree 1 is done precisely as in \cite{CM08},
Proposition 2.4. and the subsequent proof of Theorem 0.1.
The same argument given in the proof of Cor.0.2 (\cite{CM08},
  end of the third section) shows that  it suffices to consider the 
singular points
of type $A^+_\mu$
which are globally separating when locally separating.  Finally, by
Lemma 1.8 of \cite{CM08}, it remains only to check the case
where $X$ is a
real Du Val Del Pezzo surface of degree 1 with
$\rho(X) \leq 2$.

Then our assertion is exactly reduced to the main assertion of
Theorem~\ref{teo:dps1}.
\end{proof}

\section{Real rationally connected Threefolds}\label{sec:3fold}

This short section explains how Theorem \ref{teo:main} implies  the following.

\begin{lem}\label{lem:question2} Let $ W \to X$ be a real smooth projective
3-fold fibred by rational curves over a geometrically rational
surface $X$. Suppose
that $W(\R)$ is orientable. Then for each connected component $N
\subset W(\R)$, we have
$$
\sum_l (1-\frac 1{n_l(N)})\leq 2\;.
$$
\end{lem}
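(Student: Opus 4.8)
The plan is to transport the inequality from Theorem~\ref{teo:main}, which is a statement about singular points on a Du~Val surface, to the Seifert-invariants $n_l(N)$ of a real component $N$ of the threefold $W(\R)$, following the same strategy as in \cite[Sec.~3]{CM08}. First I would run the minimal model program for the rationally connected fibration $W \to X$ relative to $\R$, as Koll\'ar does: after suitable birational modifications one may assume that $W$ has terminal singularities, that $K_W$ is Cartier along $W(\R)$, and that $W$ sits over a Du~Val geometrically rational surface $X$ with $W \to X$ a conic (or rational-curve) fibration. By Koll\'ar's analysis (\cite[Th.~1.1]{KoIII} and the structure results quoted in the introduction), each orientable connected component $N$ of $W(\R)$ is, up to the connected-sum decomposition $N = N' \#^a\PP^3(\R)\#^b(S^1\times S^2)$, either a connected sum of lens spaces or Seifert fibred over a base orbifold $F$, and the multiple fibres (resp. the lens summands) are governed by the real singular points of the base.

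The heart of the argument is the dictionary, already set up in \cite{CM08} and recalled in the introduction: a real component $N'$ of $W(\R)$ corresponds to a component $M$ of the topological normalization $\overline{X(\R)}$, the number $k(N)$ of multiple Seifert fibres (resp. of lens summands of order $\ge 3$) equals the number $m := \#(\overline n^{-1}(\PPP_X)\cap M)$ of real singular points of $M$ which are of type $A^+_\mu$ and globally separating when $\mu$ is odd, and, crucially, each multiplicity satisfies $n_l(N) = \mu_l + 1$ where $A^+_{\mu_l}$ is the corresponding singularity of $M$. I would invoke \cite[Cor.~9.7]{KoIII} to note that the unordered collection $\{m_i\}$, hence the collection of indices $\mu_l$, is an invariant of extremal birational contractions of Du~Val surfaces, so one is free to contract $X$ down to a minimal model, and then — exactly as in \cite{CM08}, Proposition~2.4 and the proof of Theorem~0.1 — to reduce to the case where $X$ is a Du~Val Del~Pezzo surface of degree~$1$ with $\rho(X)\le 2$; this is the reduction already carried out in the proof of Theorem~\ref{teo:main} above.

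Once the correspondence $n_l(N) = \mu_l+1$ is in place, the inequality is immediate: Theorem~\ref{teo:main} (equivalently Theorem~\ref{teo:dps1}) gives
$$
\sum_l \Bigl(1 - \frac{1}{\mu_l+1}\Bigr) \leq 2,
$$
and substituting $\mu_l + 1 = n_l(N)$ yields $\sum_l (1 - \frac{1}{n_l(N)}) \leq 2$ directly. The case of lens spaces is handled the same way, with $n_l(N)$ the order of the $l$-th lens summand and $\mu_l = n_l(N)-1$ the index of the associated $A^+$ point.

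The main obstacle I expect is not the final substitution but the careful bookkeeping in the reduction step: one must check that the passage $W \rightsquigarrow X$ and the subsequent birational contractions of $X$ genuinely preserve the relevant data — that is, that the real singular points counted by $\PPP_X$ and their indices are not altered, that orientability of $W(\R)$ is maintained (this is where $K_W$ Cartier along $W(\R)$ and terminality are used), and that the Seifert/lens-space alternative for $N'$ survives. All of this is essentially done in \cite{CM08} and \cite{KoIII}; the proof of the present lemma consists in assembling those ingredients and appealing to Theorem~\ref{teo:main} for the new numerical input $\sum(1-\frac{1}{\mu_l+1})\le 2$.
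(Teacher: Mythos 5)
Your proposal is correct and follows essentially the same route as the paper: reduce via Koll\'ar's minimal model results (as packaged in \cite{KoIII} and \cite{CM08}) to a rational curve fibration $W'\to X'$ over a Du~Val surface, use the injection from multiple Seifert fibres (resp.\ lens summands) to globally separating $A^+_{\mu}$ points with $n_l=\mu_l+1$, and then apply Theorem~\ref{teo:main}. The only cosmetic difference is that you fold the further reduction to a degree-one Del~Pezzo surface with $\rho\le 2$ into the lemma, whereas the paper performs that step inside the proof of Theorem~\ref{teo:main} itself; this changes nothing.
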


\begin{proof} Let $ W \to X$ be a real smooth projective
3-fold fibred by rational curves over a geometrically rational
surface $X$. Suppose
that $W(\R)$ is orientable. Let $N
\subset W(\R)$ be a connected component.  Koll\'ar proved 
(see also \cite[3.3, 3.4, and
proof of Cor.~0.2]{CM08}), that there is 

1) a pair of birational contractions
$ c : W \ra W'$, $r :  X \ra X'$, where 

2) $W'$ is a real
projective 3-fold
$W'$ with terminal singularities
such that $K_{W'}$ is Cartier along $W'(\R)$, 

3) $X'$ is a Du Val surface 

4)  a rational curve fibration $f' \colon
W' \to X'$ such that $-K_{W'}$ is $f'$-ample and with

5) $f' \circ c = r \circ f $. 

Let $N''$ be the 
connected component  of the topological normalization
$\overline{W'(\R)}$ such that $N''$ maps onto $ c (\bar{n} (N) ) $.

The main property  of this construction is that 

6) $N'' = N' \#^{a'}\mathbb{P}^3(\mathbb{R})$.

Thanks to \cite[Theorem~8.1]{KoIII}, and \cite[Proof of
Cor.~0.2, end of section 3]{CM08}, 
there is a small perturbation $g \colon N''
\to F$ of $f'\vert_{\overline{n}(N'')}$ such that  $g\vert_{g^{-1}(F\setminus
\partial F)}$ is a Seifert fibration,
and an
injection from  the set of multiple fibres of
$g\vert_{g^{-1}(F\setminus \partial F)}$  to the set of singular
points of $X'$
contained in $f'(\overline{n}(N''))$ which are of type
$A^+$ and globally separating when locally separating. Under this
injection, the
multiplicity of the Seifert fibre equals
$\mu+1$  if  the singular point is of type $A^+_{\mu}$. Hence, the
desired inequality
follows from Theorem \ref{teo:main}.
\end{proof}

\section{Two-dimensional orbifolds}\label{orbi}

In this section we derive first some consequences from our main result
on the components of the topological normalization of a geometrically 
rational
Du Val surface. Then we construct a real smooth algebraic 3-fold whose
real part contains a connected component which is 
 Seifert fibred over the real
projective plane, with two multiple fibres of multiplicity 3.

The first consequence is the following corollary, already mentioned in the
introduction.

\noindent{\bf Corollary \ref{cor:orbi}.} {\em Let $W$ be a real smooth
projective rationally connected
3-fold fibred by rational curves. Suppose that $W(\R)$ is orientable
   and let $N$ be a connected
component of $W(\R)$. Then neither $N$ nor $N'$ can be endowed with a
$\widetilde{\operatorname{SL}_2\R}$ structure or with a $\HH^2\times
\R$ structure
whose base orbifold $F$  is orientable.}

\begin{proof}
As we already mentioned,

1)  if a 3-manifold  possesses a geometric
structure, then the corresponding geometry
  is unique,

2) every Seifert fibred manifold admits a geometric structure.

Moreover,

3) if $N$ or $N'$ can be endowed with a
$\widetilde{\operatorname{SL}_2\R}$ structure or with a $\HH^2\times
\R$ structure, then $N$ is Seifert fibred and, by the cited theorem of Milnor,
we have that $N'$ is Seifert fibred by the given rational curve fibration.

Now, the six geometries for  Seifert
fibrations are distinguished by negativity, nullity or positivity of the Euler
characteristic $\chi_{top}(F)$ of the base orbifold
   and by the vanishing or non vanishing  of the Euler
number  of the Seifert bundle~\cite[Table~4.1]{Scott}. In
particular  the $\widetilde{\operatorname{SL}_2\R}$ and the $\HH^2\times
\R$ geometry correspond exactly to the `hyperbolic' case,
where $\chi_{top}(F)$ is negative.

We conclude then by virtue  of Theorem \ref{teo:multiple}.
\end{proof}

\begin{prop}\label{proporbi} Let $N$ be as in  Corollary \ref{cor:orbi}.
Suppose moreover
that $N$ admits a Seifert fibration with base orbifold $F$ such that
$\vert F\vert$ is
orientable. Then the Euler characteristic $\chi_{top}(F)$ of the compact
2-dimensional orbifold $F$ is
nonnegative.
\end{prop}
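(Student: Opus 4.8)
The plan is to prove the proposition by contradiction, reducing it to Corollary~\ref{cor:orbi} through the classification of Seifert geometries. Let $N$ be a component as in Corollary~\ref{cor:orbi} (so $W$ is rationally connected and that corollary applies), assume $N$ is Seifert fibred over a base orbifold $F$ with $\vert F\vert$ orientable, and --- aiming for a contradiction --- suppose $\chi_{top}(F)<0$. Writing $\vert F\vert$ as a closed orientable surface of genus $g$ and letting $n_1\le\dots\le n_k$ be the orders of the cone points of $F$ (equivalently, the multiplicities $n_l(N)$ of the multiple fibres), we have $\chi_{top}(F)=2-2g-\sum_{l=1}^k (1-\frac1{n_l})$. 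By Theorem~\ref{teo:multiple} the sum is at most $2$, so the inequality $\chi_{top}(F)<0$ can hold only when $g\ge 1$; in particular it never holds for $\vert F\vert=S^2$, and when it does hold it forces $F$ to be a genuinely hyperbolic $2$-orbifold. This last case is the one I want to exclude.

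I would then recall, exactly as in the proof of Corollary~\ref{cor:orbi}, the standard facts that every Seifert fibred $3$-manifold carries a geometric structure, that such a structure is unique, and that the model geometry adapted to a given Seifert fibration is determined by the sign of $\chi_{top}(F)$ together with the vanishing or non-vanishing of the Euler number $e$ of the Seifert bundle, via \cite[Table~4.1]{Scott}. Since $\chi_{top}(F)<0$, that table tells us $N$ is modelled on $\HH^2\times\R$ when $e=0$ and on $\widetilde{\operatorname{SL}_2\R}$ when $e\ne 0$, the base orbifold of this structure being again $F$. Moreover, as $N$ is closed, $F$ is a closed $2$-orbifold with no reflector boundary, and since cone points do not affect orientability, orientability of $\vert F\vert$ upgrades to orientability of $F$ as an orbifold.

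Putting this together, $N$ can be endowed either with a $\widetilde{\operatorname{SL}_2\R}$ structure or with an $\HH^2\times\R$ structure whose base orbifold $F$ is orientable --- both excluded by Corollary~\ref{cor:orbi}. This contradiction gives $\chi_{top}(F)\ge 0$. The point to handle carefully (more a matter of bookkeeping than a real obstacle) is to make sure that the geometric structure furnished by \cite[Table~4.1]{Scott} has base orbifold precisely the $F$ we started with --- which is built into the way the table reads the geometry off the Seifert data --- and that ``$\vert F\vert$ orientable'' indeed implies ``$F$ orientable as an orbifold'' in this closed setting; once these are in hand, the conclusion is a direct appeal to Corollary~\ref{cor:orbi} and classical $3$-manifold geometry.
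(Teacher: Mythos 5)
Your proposal is circular. Corollary~\ref{cor:orbi} is essentially a geometric-structure restatement of Proposition~\ref{proporbi} itself, so deducing the latter from the former does not provide a new argument. To see that there really is a gap, note that Theorem~\ref{teo:multiple} alone gives only $\sum_l\bigl(1-\tfrac1{n_l}\bigr)\leq 2$, hence
$\chi_{top}(F) = 2-2g-\sum_l\bigl(1-\tfrac1{n_l}\bigr) \geq -2g$;
this is nonnegative for $g=0$, and the clause of Theorem~\ref{teo:multiple} about Seifert fibrations over $S^1\times S^1$ handles $g=1$, but nothing in that theorem rules out $g\geq 2$. You cite no independent source for Corollary~\ref{cor:orbi}, and the paper's own proof of that corollary (which simply ends ``We conclude then by virtue of Theorem~\ref{teo:multiple}'') tacitly needs exactly the extra input that Proposition~\ref{proporbi} supplies, namely a bound on the genus of $\vert F\vert$. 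Reducing the proposition to the corollary therefore presupposes the nonnegativity you are trying to prove.

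The paper closes this gap by a genuinely different route, which is the real content of the proof. By the Brusotti-type result (Theorem~4.3 and Lemma~4.4 of \cite{CM08}), the topological normalization $\overline{X(\R)}$ is realized as the real part of a smooth real geometrically rational perturbation $X_\varepsilon$ of $X$, and the underlying surface $\vert F\vert$ of the base orbifold is identified with a connected component of $X_\varepsilon(\R)$. Comessatti's theorem then says that an orientable connected component of a smooth real geometrically rational surface is a sphere or a torus. If it is a torus, the $S^1\times S^1$ clause of Theorem~\ref{teo:multiple} gives $k(N)=0$, so $F$ is a manifold and $\chi_{top}(F)=0$; if it is a sphere, the inequality $\sum_l\bigl(1-\tfrac1{n_l}\bigr)\leq 2$ gives $\chi_{top}(F)=2-\sum_l\bigl(1-\tfrac1{n_l}\bigr)\geq 0$. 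This Brusotti--Comessatti step, which pins down the genus of $\vert F\vert$ through the real algebraic geometry of $X$, is entirely absent from your argument; without it, neither Proposition~\ref{proporbi} nor Corollary~\ref{cor:orbi} is established.
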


\begin{proof} By \cite[Theorem~4.3 and Lemma~4.4]{CM08}, the
topological normalization
$\overline{X(\R)}$ can be realized as the real part of a real perturbation
   $X_\varepsilon$
of
$X$. Thanks to Comessatti's Theorem, an orientable connected component
of $X_\varepsilon(\R)$ is a
sphere or a torus. In the last case, the Seifert fibration $N\to F$
has no singular fibre and $F$
is a manifold, hence the Euler characteristic of $F$ is zero. In the
latter case, the Euler
characteristic of $F$ is positive.
\end{proof}

\subsection{A Seifert fibration with base orbifold of hyperbolic
type.}

As announced in the introduction, we are going to construct
  a real smooth 3-fold $W$,  fibred by
rational curves over a  Du Val Del Pezzo surface $X$ , with the property
that  $W(\R)$ is connected and enjoys the following properties:

i)  $W(\R)$ is orientable,

ii) $W(\R)$ has a connected component which is Seifert fibred over a
base orbifold  $F$,

iii) $F$ is non orientable and of hyperbolic type

iv) the Du Val Del Pezzo surface $X$ is minimal over $\R$.

Our method of construction is based on a rather general procedure
which produces Seifert fibrations as projectivized tangent bundles
of Du Val surfaces, so we
start with some easy lemmas, the first one being well known.

\begin{lem}\label{orient}
Let $M$ be a real differentiable manifold. Then the tangent $ TM$ is always
orientable, while $\PP (TM)$ is orientable if $ n : = dim_{\R} (M)$ is even.
\end{lem}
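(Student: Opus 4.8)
The plan is to verify both assertions by directly inspecting transition functions, needing no hypothesis on the orientability of $M$. For the first assertion, fix an atlas $\{(U_\alpha,x^\alpha)\}$ of $M$; over $U_\alpha$ the coordinate frame $\partial/\partial x^\alpha_1,\dots,\partial/\partial x^\alpha_n$ trivializes $TM$, giving coordinates $(x^\alpha,\xi^\alpha)$ on $\pi^{-1}(U_\alpha)\subset TM$. On an overlap, writing $J_{\alpha\beta}:=\partial x^\alpha/\partial x^\beta$ for the Jacobian of the change of charts on $M$, the induced change of coordinates on $TM$ is $(x^\beta,\xi^\beta)\mapsto\big(x^\alpha(x^\beta),\,J_{\alpha\beta}\xi^\beta\big)$, whose Jacobian matrix is block lower triangular with both diagonal blocks equal to $J_{\alpha\beta}$, hence of determinant $(\det J_{\alpha\beta})^2>0$. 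Thus every chart transition of the $2n$-manifold $TM$ is orientation preserving, so $TM$ is orientable.

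For $\PP(TM)$ I would not argue fibrewise, since when $n$ is even the fibre $\PP^{n-1}(\R)$ is itself non-orientable, so the usual criterion (orientable base, orientable fibre, orientation-preserving structure group) does not apply. Instead, fix a Riemannian metric on $M$ and let $S(TM)\subset TM$ be the unit sphere bundle. The steps are: (i) $S(TM)$ is a smooth hypersurface of $TM$ whose normal bundle is trivialized by the radial (Euler) vector field, hence, being two-sided in the orientable manifold $TM$, it is orientable; orient it so that the orientation of $S(TM)$ followed by the outward radial direction yields the orientation of $TM$ constructed above. (ii) The fibrewise antipodal map $\sigma\colon v\mapsto -v$ is a free smooth involution of $S(TM)$ with quotient $\PP(TM)$. (iii) In the coordinates above $\sigma$ reads $(x,\xi)\mapsto(x,-\xi)$, with Jacobian determinant $(-1)^n$, so $\sigma$ preserves the orientation of $TM$ precisely when $n$ is even; and since $\sigma$ carries the radial field to the radial field, it then also preserves the chosen orientation of $S(TM)$. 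Therefore, when $n$ is even, $\PP(TM)=S(TM)/\sigma$ is the quotient of an orientable manifold by an orientation-preserving free involution, hence orientable.

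I expect no genuine obstacle; the only point requiring a little care is the orientation bookkeeping in (i)--(iii), together with the observation that none of it uses orientability of $M$. As an alternative to (i)--(iii), one can compute $w_1$ directly: combining the exact sequence $0\to T^{\mathrm{vert}}\PP(TM)\to T\PP(TM)\to\pi^*TM\to 0$ with the relative Euler sequence, which gives $T_{\PP(TM)/M}\cong\gamma^\vee\otimes(\pi^*TM/\gamma)$, one obtains $w_1\big(T\PP(TM)\big)=n\,h$, where $h=w_1(\gamma)$ is the first Stiefel--Whitney class of the tautological line bundle; the two copies of $\pi^*w_1(TM)$ cancel precisely because it is $TM$ itself that is being projectivized, so $w_1$ vanishes exactly when $n$ is even.
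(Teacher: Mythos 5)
Your proof is correct, but it takes a genuinely different route from the paper. The paper argues entirely at the level of determinant line bundles: for $TM$ it uses the exact sequence $0\to p^*TM\to T(TM)\to p^*TM\to 0$ to conclude $\det T(TM)\cong(\det p^*TM)^{\otimes 2}$, and for $\PP(TM)$ it combines the vertical--horizontal sequence with the relative Euler sequence to get $\det T\PP(TM)\cong(\det\pi^*TM)^{\otimes 2}\otimes U^{\otimes n}$, hence trivial for $n$ even. This is exactly the computation you only sketch in your closing alternative paragraph ($w_1=nh$ after the two copies of $\pi^*w_1(TM)$ cancel mod $2$). Your primary argument instead realizes $\PP(TM)$ as $S(TM)/\sigma$ for the free antipodal involution $\sigma$, orients $S(TM)$ as a two-sided hypersurface of the oriented manifold $TM$, and checks that $\sigma$ is orientation-preserving on $TM$ (Jacobian $(-1)^n$) and carries the outward radial field to itself, hence is orientation-preserving on $S(TM)$ when $n$ is even; the quotient is then orientable. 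The bookkeeping in steps (i)--(iii) is sound, and in particular you are right that the tangent vector $v$ at the point $v\in S(TM)$ is sent by $d\sigma$ to $-v$ at $-v$, which is again the outward normal. The sphere-bundle route is more hands-on and avoids any appeal to the relative Euler sequence, at the price of introducing a metric; the paper's route is purely bundle-theoretic, is metric-free, and generalizes verbatim to $\PP(E)$ for an arbitrary real bundle $E$, giving $w_1(T\PP(E))=\pi^*w_1(TM)+\pi^*w_1(E)+(\operatorname{rk}E)\,h$, which makes transparent why the case $E=TM$ is special (the two pullback terms cancel). Either argument fully proves the lemma; your coordinate computation for the $TM$ statement is likewise equivalent to the paper's exact-sequence identity, just written out in Jacobians.
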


\begin{proof}
Let $ p : TM \ra M$ be the natural projection.

By the  exact sequence
$ 0 \ra p^* (TM) \ra T (TM) \ra  p^* (TM) \ra 0$ we get that
$\bigwedge^{2n} (T(TM)) \cong  \bigwedge^{n} (p^*(TM))^{\otimes 2}$
is trivial.

Let $ \pi : \PP (TM) \ra M$ be the natural projection.

Then by
the  exact sequences

$ 0 \ra VT (\PP (TM)) \ra T (\PP (TM)) \ra  \pi^* (TM) \ra 0$,

(here $VT$ denotes the subbundle of vertical vectors) and

$ 0   \ra ( \R \times  \PP (TM)) \ra  \pi^* (TM) \otimes U^{-1} \ra VT (\PP
(TM))\ra 0$,

where $U$ is the tautological line subbundle,
we get  $\bigwedge^{2n-1} T (\PP (TM)) \cong  \bigwedge^{n}
(\pi^*(TM))^{\otimes 2} \otimes U^{ \otimes n}$,
thus we have a trivial line bundle if $n$ is even.
\end{proof}

Next, we consider the projectivized tangent bundle of Du Val surfaces with
$A_n$ singularities.

This 3-fold is simply obtained by glueing together the projectivized
tangent bundle of the smooth part with the $\mu_{n+1}$ quotient
$$ Y_n : = (\bbA_{\C}^2 \times \PP^1_{\C} ) / \mu_{n+1}$$ of
the projectivized tangent bundle of  the affine plane via the action of
the $(n+1)$-th roots of unity induced by the action on $\bbA_{\C}^2$
yielding
the quotient $ A_n : = \bbA_{\C}^2  / \mu_{n+1}$.

\begin{lem}
$Y_n$ has isolated singularities if and only if $n$ is even.
If $n$ is even, these singularities are terminal quotient singularities
$Z_n : = \frac{1}{n+1}(1,-1,2)$ where the canonical divisor is not Cartier.

\end{lem}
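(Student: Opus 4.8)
The plan is to analyse $Y_n=(\bbA^2_\C\times\PP^1_\C)/\mu_{n+1}$ through the fixed loci of $\mu_{n+1}=\langle\zeta\rangle$, $\zeta=\exp(2\pi i/(n+1))$. The generator acts on $\bbA^2_\C$ by $(x,y)\mapsto(\zeta x,\zeta^{-1}y)$ (the action with quotient $A_n$), hence on the fibre of $\PP(T\bbA^2_\C)$ by the projectivized differential $[a:b]\mapsto[\zeta a:\zeta^{-1}b]=[\zeta^2a:b]$; thus $\zeta^k$ carries $(x,y,[a:b])$ to $(\zeta^kx,\zeta^{-k}y,[\zeta^{2k}a:b])$. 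First I would observe that a nontrivial $\zeta^k$ fixes $(x,y,[a:b])$ only if $x=y=0$, that it then always fixes $[1:0]$ and $[0:1]$, and that it fixes every point of $\PP^1$ precisely when $\zeta^{2k}=1$. Since $\mu_{n+1}$ acts freely off this locus, the quotient map is \'etale there, so $Y_n$ is smooth away from the image of the fixed locus.

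The equivalence then splits according to the parity of $n$. If $n$ is even, $n+1$ is odd, so $\zeta^{2k}=1$ forces $\zeta^k=1$; the only points with nontrivial stabiliser are $(0,0,[1:0])$ and $(0,0,[0:1])$, each fixed by all of $\mu_{n+1}$, so their two images exhaust $\operatorname{Sing}Y_n$, which is therefore isolated. If $n$ is odd, write $n+1=2m$: then $\zeta^m$ acts as $(x,y,[a:b])\mapsto(-x,-y,[a:b])$ and fixes the whole curve $\{(0,0)\}\times\PP^1_\C$; transversally to that curve the stabiliser $\{\pm1\}$ acts on $\bbA^2_\C$ by $-\operatorname{id}$, so $Y_n$ is locally $\C\times(\C^2/\{\pm1\})$ along the image of the curve, hence singular along a one-dimensional set. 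This proves that $Y_n$ has isolated singularities $\iff n$ is even.

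For $n$ even I would read off the local model at each of the two points using an affine chart of $\PP^1_\C$: near $[1:0]$, with coordinate $t=b/a$, the action becomes $(x,y,t)\mapsto(\zeta^kx,\zeta^{-k}y,\zeta^{-2k}t)$, i.e. $\frac{1}{n+1}(1,-1,-2)$; near $[0:1]$, with $s=a/b$, it becomes $(x,y,s)\mapsto(\zeta^kx,\zeta^{-k}y,\zeta^{2k}s)$, i.e. $\frac{1}{n+1}(1,-1,2)$. Replacing the generator $\zeta$ by $\zeta^{-1}$ identifies $\frac{1}{n+1}(1,-1,-2)$ with $\frac{1}{n+1}(1,-1,2)$, so both singular points are copies of $Z_n=\frac{1}{n+1}(1,-1,2)$. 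Since $\gcd(2,n+1)=1$, all three weights are coprime to $n+1$, so the action of $\mu_{n+1}$ on $\C^3$ fixes only the origin (in particular it is free in codimension one) and the Reid--Tai criterion applies: for $1\le k\le n$ the age of $\zeta^k$ is $\{k/(n+1)\}+\{-k/(n+1)\}+\{2k/(n+1)\}$, where $\{\,\cdot\,\}$ denotes the fractional part; this equals $1+\{2k/(n+1)\}>1$, because $\{k/(n+1)\}+\{-k/(n+1)\}=1$ for $k$ not divisible by $n+1$, and $\{2k/(n+1)\}\ne0$ since $n+1$ is odd. Hence $Z_n$ is terminal. Finally, $dx\wedge dy\wedge dt$ is an eigenvector of $\zeta$ with eigenvalue $\zeta^{1-1-2}=\zeta^{-2}$, so $(dx\wedge dy\wedge dt)^{\otimes m}$ is $\mu_{n+1}$-invariant iff $n+1\mid 2m$ iff $n+1\mid m$; thus the index of $Z_n$ is $n+1>1$, so $K_{Y_n}$ is not Cartier at the two points (which in particular reconfirms that they are genuine singularities).

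All the ingredients are elementary; the only point requiring care is the bookkeeping --- getting the induced action on the tangent $\PP^1$ right (the extra factor $\zeta^2$), keeping the generator and sign conventions straight when matching the two local charts to $Z_n$, and invoking the Reid--Tai terminality criterion (equivalently, the classification of three-dimensional terminal cyclic quotient singularities as the $\frac{1}{r}(1,-1,b)$ with $\gcd(b,r)=1$). I do not expect any substantive obstacle.
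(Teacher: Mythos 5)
Your proof is correct and follows essentially the same route as the paper: the same fixed-locus analysis distinguishing $n$ even from $n$ odd, the same local coordinate computation yielding the quotient $\frac{1}{n+1}(1,-1,2)$ at the two fixed points, and the same observation that $dx\wedge dy\wedge dt$ is a nontrivial eigenvector so $K$ is not Cartier. The only cosmetic difference is that where the paper cites \cite{mori} for terminality, you verify it directly via the Reid--Tai age computation, which is a reasonable and slightly more self-contained choice.
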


\begin{proof}
$\mu_{n+1} : = \{ \zeta | \zeta^{n+1} = 1 \}$ acts on the affine plane
$\bbA_{\C}^2$ by $(x,y) \mapsto (\zeta x ,  \zeta^{-1} y)$, whence
its action on $\bbA_{\C}^2 \times \PP^1_{\C}$,
$$(x,y)(\xi : \eta ) \mapsto (\zeta x ,  \zeta^{-1} y)(\zeta \xi :  \zeta^{-1}
\eta ).$$

If $n$ is odd, $ n+1 = 2 k $ and $\zeta^k$ acts trivially on $\PP^1_{\C}$;
we see that we get a 
corresponding 1-dimensional singular locus,  analytically
isomorphic to $A_1 \times \bbA_{\C}^1$.

Assume now that $n$ is even, so that each nontrivial group element has
only two fixed points, namely, for $ x=y=\xi=0$, respectively for
$ x=y=\eta=0$. At each point, passing to local coordinates,
we see that we have a singularity of type $Z_n$, the quotient 
$ Z_n : = \bbA_{\C}^3  / \mu_{n+1}$
by the action where
$(x,y,z) \mapsto (\zeta x ,  \zeta^{-1} y,  \zeta^{2} z)$.
This singularity is well known to be terminal (see  \cite{mori}), and the
Zariski canonical divisor $K_Z$ there
is not Cartier because the differential form $dx \wedge dy \wedge dz $
is not invariant, being multiplied by $\zeta^{2}$ (only $(n+1) K_Z$ 
is Cartier).

\end{proof}

Over the real numbers, however, we have different  forms of the
$A_n$ singularities, as we mentioned in the beginning.

\begin{lem}\label{PTB}
Let $n$ be an even number and define $Y_n^-$ to be the projectivized
tangent bundle of a singularity of type $A_n^-$,
and define analogously $Y_n^+$ for a  singularity of type $A_n^+$.
$Y_n^{\pm }$ has terminal isolated singularities and the  real part 
$Y_n^-(\R)$ is a PL-manifold  of real dimension 3, while the  real part 
$Y_n^+(\R)$ is contained in the smooth locus of
  $Y_n^+$.

The natural projection $Y_n^{+ }(\R) \ra A_n^{+ }(\R )$
is a Seifert fibration with a multiple fibre of multiplicity
$(n+1)$ over the origin, while  $Y_n^{- }(\R) \ra A_n^{- }(\R )$
is a  topologically trivial $S^1$-bundle. 
\end{lem}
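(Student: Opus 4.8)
The plan is to exploit the explicit quotient description $Y_n^{\pm}=(\bbA^2_{\C}\times\PP^1_{\C})/\mu_{n+1}$ introduced before the previous lemma, tracking through it the two real structures. Since, as a complex variety, $Y_n^{\pm}$ is precisely the $3$-fold $Y_n$ of that lemma and $n$ is even, it has exactly two singular points $P_0,P_\infty$ (the images of $\{0\}\times\{(0:1)\}$ and $\{0\}\times\{(1:0)\}$), both of type $Z_n=\tfrac{1}{n+1}(1,-1,2)$ and hence terminal; this takes care of the first assertion. Next I would pin down the real structures. Writing $A_n=\bbA^2_{\C}/\mu_{n+1}$ for the standard action $(u,v)\mapsto(\zeta u,\zeta^{-1}v)$, with invariants $s_1=u^{n+1}$, $s_2=v^{n+1}$, $s_3=uv$ subject to $s_1s_2=s_3^{\,n+1}$, one checks that $A_n^-$ corresponds to the conjugation $\tau\colon(u,v)\mapsto(\bar u,\bar v)$ (which makes $s_1+s_2$, $s_1-s_2$, $s_3$ real, yielding $x^2-y^2=z^{n+1}$) and $A_n^+$ to $\sigma\colon(u,v)\mapsto(\bar v,\bar u)$ (which makes $s_1+s_2$, $i(s_1-s_2)$, $s_3$ real, yielding $x^2+y^2=z^{n+1}$). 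These lift to anti-holomorphic involutions $\tilde\tau,\tilde\sigma$ of $\bbA^2_{\C}\times\PP^1_{\C}$ acting on the tangent direction; in the affine chart $s=\xi/\eta$ of $\PP^1_{\C}$ this gives $\tilde\tau\colon s\mapsto\bar s$ and $\tilde\sigma\colon s\mapsto 1/\bar s$, while $\mu_{n+1}$ acts there by $s\mapsto\zeta^2 s$. The one elementary fact used repeatedly is that $n+1$ is odd, so $\zeta\mapsto\zeta^2$ is a bijection of $\mu_{n+1}$, $\gcd(4,n+1)=1$, and $\R\cap\mu_{n+1}=\{1\}$.

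For $Y_n^+$: because $\tilde\sigma$ interchanges $(0:1)$ and $(1:0)$ it swaps $P_0$ and $P_\infty$, so neither is a real point and $Y_n^+(\R)$ lies in the smooth locus of $Y_n^+$. To describe it, compute $\operatorname{Fix}(\tilde\sigma)$: the real locus of $\bbA^2_{\C}$ is $\{(u,\bar u)\}\cong\C$, and over it the real locus of $\PP^1_{\C}$ for $s\mapsto 1/\bar s$ is $\{|s|=1\}\cong S^1$. Since $\tilde\sigma$ centralizes $\mu_{n+1}$, the group acts on $\operatorname{Fix}(\tilde\sigma)\cong\C\times S^1$ by $\zeta\cdot(u,s)=(\zeta u,\zeta^2 s)$, and this action is free ($\zeta u=u$ forces $\zeta=1$ if $u\neq0$, and $\zeta^2 s=s$ forces $\zeta=1$ if $u=0$). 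The odd-order argument shows that every real point of $Y_n^+$ is the image of a $\tilde\sigma$-fixed point, so $Y_n^+(\R)=(\C\times S^1)/\mu_{n+1}$. This is exactly the standard local model $(\R^2\times S^1)/\Z_{n+1}$ of a Seifert-fibred solid torus over the cone $A_n^+(\R)=\C/\mu_{n+1}$: over the origin it has a single exceptional fibre of multiplicity $n+1$ (the $S^1$-rotation number $2$ being coprime to $n+1$), while over $A_n^+(\R)\setminus\{0\}$ it is the honest circle bundle $\PP(T)$ of the smooth part. Hence the projection $Y_n^+(\R)\to A_n^+(\R)$ is a Seifert fibration with a single multiple fibre, of multiplicity $n+1$, over the origin.

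For $Y_n^-$: here $\tilde\tau$ fixes $(0:1)$ and $(1:0)$, so $P_0,P_\infty$ are real points of $Y_n^-$. One has $\operatorname{Fix}(\tilde\tau)\cong\R^2_{u,v}\times\R\PP^1_s$, and although $\tilde\tau$ only normalizes $\mu_{n+1}$ (by inversion) and $\mu_{n+1}$ does not preserve this real locus, I would show directly that the induced map $\operatorname{Fix}(\tilde\tau)\to Y_n^-(\R)$ is a bijection: surjective because any $\tilde\tau$-invariant $\mu_{n+1}$-orbit contains a $\tilde\tau$-fixed point ($n+1$ odd), injective because two $\tilde\tau$-fixed points differing by $g\in\mu_{n+1}$ must have $g=1$ (from $\R\cap\mu_{n+1}=\{1\}$ away from the origin, and from $\gcd(4,n+1)=1$ on the central $\PP^1$). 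Therefore $Y_n^-(\R)\cong\R^2\times\R\PP^1$, which is a PL $3$-manifold — in particular it is a manifold at the two singular points of $Y_n^-$ it contains; and since $A_n^-(\R)$ is likewise identified with $\operatorname{Fix}(\tau|_{\bbA^2_{\C}})\cong\R^2$, the projection becomes the first projection $\R^2\times\R\PP^1\to\R^2$, i.e.\ the trivial $S^1$-bundle ($\R\PP^1\cong S^1$).

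The main obstacle is the bookkeeping in the two odd-order arguments — namely, showing that forming the real part commutes with the $\mu_{n+1}$-quotient: that every real point of $Y_n^{\pm}$ is represented by a fixed point of the lifted involution, that no spurious identifications survive in the $\sigma$-case and that all identifications collapse in the $\tau$-case, with special attention to the fibre over the origin (the full $\PP^1$ which contains, in the $A_n^-$ situation, or avoids, in the $A_n^+$ situation, the quotient singularities). Once $Y_n^{\pm}(\R)$ have been identified with $(\C\times S^1)/\mu_{n+1}$ and $\R^2\times\R\PP^1$ respectively, recognizing the Seifert model with its multiplicity and the triviality of the $S^1$-bundle is routine.
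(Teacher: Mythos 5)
Your proposal is correct and rests on the same essential mechanism the paper exploits — the fact that $n+1$ is odd, so that passing to real points commutes with the $\mu_{n+1}$-quotient — but you package it differently, and that difference is worth noting. The paper works with the real group scheme $\mu_{n+1}^{\pm}$ directly and, for the $A_n^-$ case, establishes the crucial ``real points commute with quotient'' step via an explicit Galois sandwich $\bbA_{\R}^3 \to Z_n^- \to \bbA_{\R}^3$ with $\Phi(x,y,z)=(x^{n+1},y^{n+1},z^{n+1})$, together with a concrete invariant-monomial computation; for the $A_n^+$ case it writes down the real rotation action and the invariants $u,v,w$ with $u^2+v^2=w^{n+1}$, and then states the Seifert-fibration conclusion rather tersely. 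You instead lift the two anti-holomorphic involutions $\tau$ and $\sigma$ on $\bbA_\C^2$ to the cover $\bbA_\C^2\times\PP^1_\C$ and identify $Y_n^\pm(\R)$ with the quotient of the fixed locus, giving a uniform treatment of the $+$ and $-$ cases and making explicit — via the descent/surjectivity and injectivity arguments on $\tilde\sigma$- and $\tilde\tau$-invariant orbits — the step the paper handles casually in the $A_n^+$ case. What your approach buys is conceptual uniformity and a clean identification $Y_n^+(\R)=(\C\times S^1)/\mu_{n+1}$, $Y_n^-(\R)\cong\R^2\times\R\PP^1$; what the paper's sandwich buys is an elementary, coordinate-level verification tied directly to the explicit invariants. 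One small remark: the coprimality you invoke for injectivity in the $A_n^-$ case only needs $\gcd(2,n+1)=1$ (so that $g^2=1$ forces $g=1$), not $\gcd(4,n+1)=1$; that is harmless, just slightly more than required.
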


\begin{proof}

We treat first the $A_n^-$-case.
We consider the real group scheme $\mu_{n+1}^- : = \{ \zeta | \zeta^{n+1} =
1\}$ which acts on the affine plane
$\bbA_{\R}^2$ by $(x,y) \mapsto (\zeta x ,  \zeta^{-1} y)$, whence
its action on $\bbA_{\R}^2 \times \PP^1_{\R}$,
$$(x,y)(\xi : \eta ) \mapsto (\zeta x ,  \zeta^{-1} y)(\zeta \xi :  \zeta^{-1}
\eta )$$
is such that each nontrivial group element has
only two fixed points, namely, the point where $ x=y=\xi=0$, 
respectively the one where
$ x=y=\eta=0$. At each point, passing to local coordinates,
we see that we have a singularity of type $Z_n^-$, the quotient 
$ Z_n^- : = \bbA_{\R}^3  / \mu_{n+1}^-$
by the action where
$(x,y,z) \mapsto (\zeta x ,  \zeta^{-1} y,  \zeta^{2} z)$.

Let us now observe that  $Z_n^-$ sits inside a Galois sandwich
$$ \bbA_{\R}^3  \stackrel{\psi_2}{\ra}   Z_n^- \stackrel{\psi_1}{\ra} 
\bbA_{\R}^3  $$
where $\psi_2$ is the quotient morphism and the composition
$\Phi : = \psi_1 \circ \psi_2$ is given by 
$$ \Phi (x,y,z) : = (x^{n+1},y^{n+1},z^{n+1})  $$
(the coordinates of $\psi_2$ are just a set of invariant monomials
including $x^{n+1},y^{n+1},z^{n+1}, xy , y^2 z $).
Since $\Phi$ induces a homeomorphism $\Phi (\R) \colon \bbA_{\R}^3
\ra \bbA_{\R}^3$, our claim is established if we show that in the real
part of the sandwich
$$ \R^3  \stackrel{\psi_2 (\R)}{\ra}   Z_n^-(\R) \stackrel{\psi_1(\R)}{\ra} 
\R^3  $$
the polynomial map $\psi_2 (\R)$ is surjective.

Take a point $P \in Z_n^-(\R)$: since it maps under $\psi_1(\R)$ to 
$\R^3  $, there exists a real point $(x,y,z) \in \R^3  $ and
elements $\zeta_i \in \mu_{n+1}$, for $ i=1,2,3$, such that 
$P = \psi_2(\zeta_1 x, \zeta_2 y , \zeta_3 z) $.
Since however $\zeta_1 x \zeta_2 y \in \R$ and $(\zeta_2 y )^2 \zeta_3 z
\in \R$, we get: $\zeta_1  \zeta_2  \in \R$, $(\zeta_2  )^2 \zeta_3 
\in \R$. Since $n+1$ is odd, then $\zeta_2 = \zeta_1  ^{-1}$
and $\zeta_3 = \zeta_2  ^{-2} =  \zeta_1  ^{2}$: we have thus proven that
$P = \psi_2( x,  y ,  z) $.

Similarly, we see that the quotient morphism $\R^2 \ra A_n^- (\R)$
is a homeomorphism. Hence, the product fibration 
$\R^2 \times \PP^1_{\R} (\R)$ descends to a topologically trivial
$S^1$-bundle  over $A_n^- (\R)$. 

The case of   the $A_n^+$-case is simpler but more interesting. The action of
$\mu_{n+1}(\C) : = \{ \zeta \in \C | \zeta^{n+1} = 1 \}$  on the affine plane
$\bbA_{\C}^2$ is given by
 $$(x + iy, x-iy) \mapsto (\zeta (x + iy),  \zeta^{-1} (x-iy) ).$$

The action is defined over $\R$ since
$$(x , y) \mapsto ( Re (\zeta )x  -  Im (\zeta )y, Im (\zeta )x + Re (\zeta )y ),$$
and it defines an action of the real group scheme 
$\mu_{n+1}^+ : = \{ (a,b) | (a + ib)^{n+1} = 1 \}$ on 
$\bbA_{\R}^2$  given by
 $$(x , y) \mapsto (a x  -  b y, b x +  a y).$$

The ring of real invariant polynomials is generated, if we set 
$ P : = (x + i y )^{n+1} $, by $ u : = (P + \bar{P}), v := \frac{1}{i}(P - \bar{P}), 
w : = (x^2 + y^2)$, which satisfy the equation of $A_n^+$,
$  u^2 + v^2 = w^{n+1}$.

 The cyclic group stabilizes $\R^2$, and the origin is the only fixed point,
while the action on $\bbA_{\R}^2 \times \PP^1_{\R}$
has no real fixed points, hence $Y_n^{+ }(\R) \ra A_n^{+ }(\R )$
is a Seifert bundle and the multiplicity over the origin is $n+1$. 
\end{proof}

\begin{oss}
As a consequence of the previous lemma, given any real Du Val surface
$X$ with only $A_n^+$ singularities with $n$ even, the projectivized
tangent bundle of $X$, $ W' : = \PP (TX)$ is a 3-fold with terminal
singularities,  such that 

i) the real part $ W' (\R)$ is contained 
in the smooth locus of $W'$,

ii)  $ W' (\R)$ is  orientable.
\end{oss}

The previous remark allows us to  construct the desired
 real 3-fold.

\noindent{\bf Theorem \ref{hyperbolic}.}
{\em There exists a minimal real Du Val Del Pezzo surface
$X$ of degree $1$ having exactly two singular points, of type $A_2^+$,
and such that the real part $X(\R)$ has a
connected component containing the two singular points and which is
homeomorphic to a  real projective plane.

Let $W'$ be the projectivized tangent bundle of $X$: then
$W'$ has terminal singularities, $ W' (\R)$ is contained 
in the smooth locus of $W'$, 
in particular if $W$ is obtained  resolving  the singular points of $W'$,
then $ W (\R) =  W' (\R)$.

Moreover $ W (\R)$ is  orientable and  contains a connected component $N$
which is  Seifert fibred over a non
orientable orbifold of hyperbolic type (the real projective plane with two
points of multiplicity 3). } 

\begin{proof}
We construct the real Del Pezzo surface $X$ as the blow up of the real
projective plane in $8$ real points. We shall indeed construct a family
of such surfaces $Y$, having two real $A_2^-$ singularities, and two 
real and non isolated $A_1$
singularities. For certain values of the parameters, once we represent the  
Del Pezzo surface $Y$ as the double cover of the quadric cone $Q$ branched
on the vertex of the cone and on a real branch curve $B$, then the two
$A_1$ points  give rise to two isolated real points of the real part $B(\R)$ of the branch curve.

Using then the generalization of Brusotti's theorem given in Theorem 4.3 of
\cite{CM08}, we can take a small deformation which leaves unchanged 
the two real
$A_2^-$ singularities, but deforms  the two
$A_1$ points replacing the two isolated  points of $B(\R)$ by two small ovals.

We obtain now a real Del Pezzo surface $Z$ of degree 1 with exactly two
real $A_2^-$ singularities, and our desired real Del Pezzo surface $X$ 
will be the same complex surface $Z$, but with a new real structure 
$\sigma' : = \sigma \circ i$, where $\sigma$ is the real involution of $Z$,
and $i : Z \ra Z$ is the {\bf Bertini involution},  the covering involution of
the bianticanonical morphism, yielding $Z$ as a double cover of the
quadric cone $Q$.

In terms of this last representation, this simply amounts to exchanging the
region of positivity with the region of negativity. For this reason, $X$
has now two $A_2^+$
singularities, and the inside of the two ovals are now regions of positivity;
we conclude that the connected components of $ X (\R)$ consist of
two spheres $S^2$, and of a component homeomorphic 
to the real projective plane, and containing the two $A_2^+$
singularities.

In order to provide the defining equations for these Del Pezzo surfaces,
consider now  the plane
$\R^2$ with coordinates
$(u,v)$ and in it the two pairs of lines 

$$  L_+ \cup L_- : = \{ u^2 - v^2 = 0\} $$
$$  L_1 \cup L_{-1} : = \{ u^2 - 1 = 0\}. $$

Each pair of lines shall yield a respective $A_2$ configuration on our Del
Pezzo surfaces of degree $1$. We choose in fact eight points in the plane,
 such
that each of the four lines contains four of them, namely the set 
$$ \{ (1,1), (1,-1),(-1,1),(-1,-1), (1+ \de,1+\de), (-1- \de,1+\de),(1,1
+\epsilon),(-1,1
+\epsilon)
\}.
$$
We see easily that the configuration is symmetric with respect to the 
real involution $ i $ such that $ i (u,v) = (-u,v)$, and this symmetry
is responsible of the fact that there is a conic $D$ containing the 
following symmetrical set of six points:
$ \{ (1,1), (-1,1), (1+ \de,1+\de), (-1- \de,1+\de),(1,1
+\epsilon),(-1,1
+\epsilon)
\}.$

Similarly, there  is a conic $D'$ containing the 
following symmetrical set of six points:
$ \{ (1,-1), (-1,-1), (1+ \de,1+\de), (-1- \de,1+\de),(1,1
+\epsilon),(-1,1
+\epsilon)
\}.$

We let $\tilde{Y}$ be the blow up of the real projective plane in the above
eight points. We claim that $\tilde{Y}$ is a weak Del Pezzo surface of
degree $1$, i.e., that its anticanonical divisor is nef (it is big 
since it clearly satisfies
$K^2_{\tilde{Y}} = 1$). 

This claim follows right away from the fact that, if we take homogeneous
coordinates $(u,v,t)$ on $\PP^2$, the system of cubics through the eight 
points is the pencil
spanned by the two cubics
$$ x_0 : = (u^2 - v^2) (v- (1+\epsilon)t),  \ x_1 :=  (u^2 - t^2) (v-(1+\de)t),$$
whose proper transforms meet only (transversally) in the point `at infinity'
$v=t=0$.

The bianticanonical morphism $\phi$ of $\tilde{Y}$ is the double covering
of the quadric cone $Q = \PP(1,1,2) $ given by $(x_0, x_1, y_2)$, where
$$y_2 : = (u^2 - v^2) (u^2 - t^2) ((\de v - \epsilon (1 + \de)t)^2 - (\de -
\epsilon )^2 u^2 ) $$
has as set of zeros the union of six lines and passes doubly through the
$8$ points, but does not vanish on the base point of the anticanonical pencil
(hence, $y_2$ is not a linear combination of $x_0^2, x_0 x_1, x_1^2$).

The morphism  $\phi$ clearly factors through the quotient of $\tilde{Y}$ by the
involution $i$ ($\phi (u,z,t) = \psi (u^2, v,t)$) and we have then a
factorization $ \phi = \psi \circ \pi$, where 
$\psi :\tilde{Y} / i \ra Q$, and $\pi$ is the quotient projection.
Hence it follows that
$\psi$ contracts the images under $\pi$ of the $(-2)$-curves on $\tilde{Y}$,
and that the branch curve $ B \subset Q$ is the image under $\phi$
of the projective line $ u = 0$.

The branch curve $B$ is irreducible of arithmetic genus $4$, and it has $4$
singular points, corresponding to the blow down of the curves $ D, D', L_+
\cup L_-, L_1 \cup  L_{-1}$: hence we conclude that the only $(-2)$-curves
on $\tilde{Y}$ lie on the corresponding fibres of the anticanonical pencil, and
a direct inspection shows that there are no other $(-2)$-curves on $\tilde{Y}$.

 We want now to find a choice of the parameters such that  the curves $D, D'$
are real and do not intersect the line $u=0$, thereby yielding two real
isolated double points of the branch curve $B(\R)$.

These conditions lead to some inequalities holding among $ \de, \epsilon$,
but a simple solution is obtained choosing
$$ \de = 1, \epsilon = -1 . $$

For this choice we get 

$$ x_0  = (u^2 - v^2) v,  \ x_1 =  (u^2 - t^2) (v- 2t), y_2 = (u^2 - v^2)
(u^2 - t^2) ((2t+v)^2 - 4 u^2) $$
$$ D = \{ -2 (u^2 - t^2) + 3 v (v-t) = 0\},  D' = \{ -2 (u^2 - t^2) +  v (v+t) =
0.$$ 
Whence, the line $u=0$ meets $D$ and $D'$ in two pairs of complex
conjugate points.

\begin{figure}[htbp]
\begin{center}
     \epsfysize=4.5cm
    \epsfbox{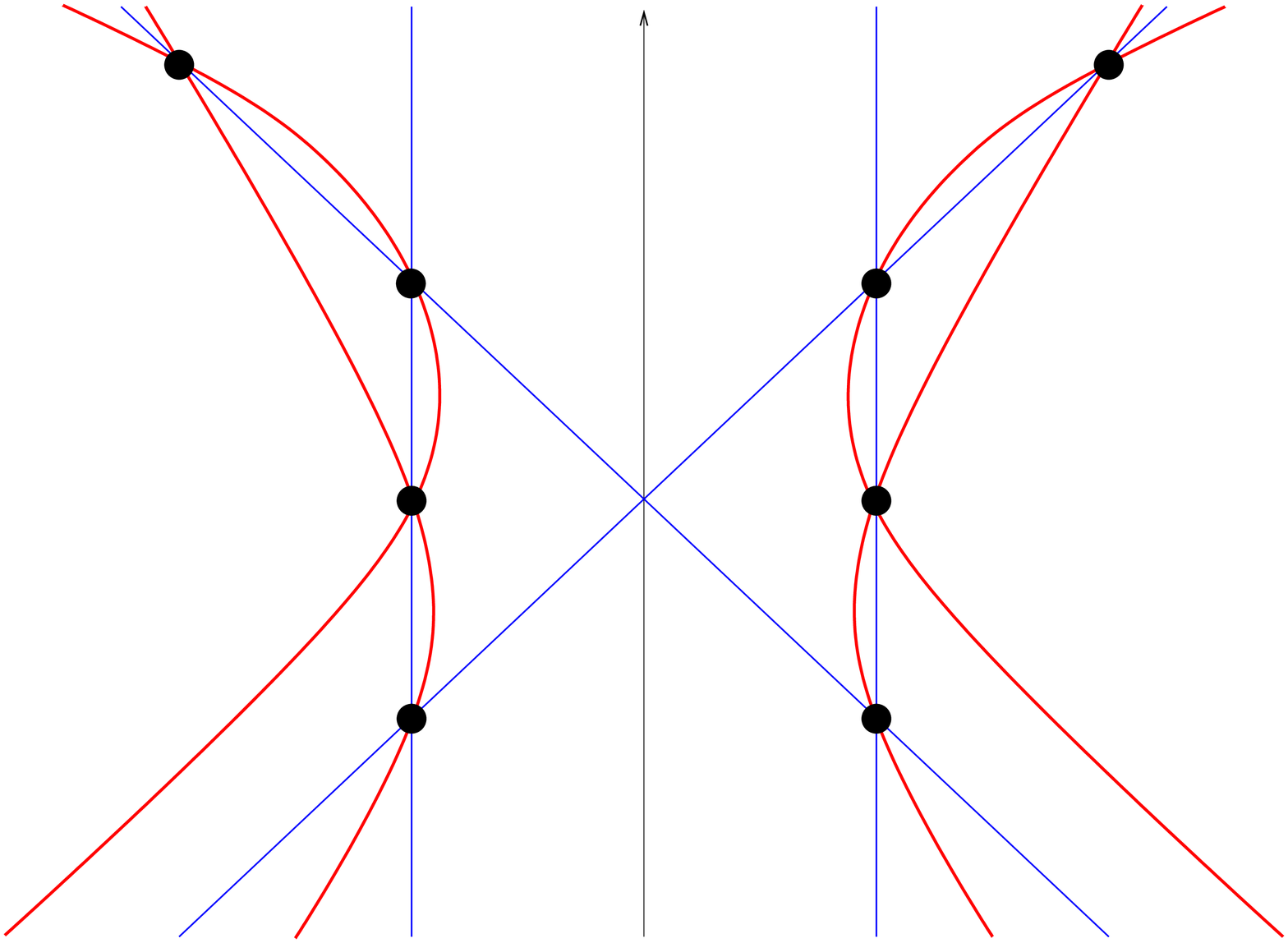}
\caption{}\label{fig:branch-curve}
\end{center}
\end{figure}

 Using now the plane representation of the quadric cone (a main
tool in
\cite{CM08}) given by the rational map $ (x_0^2, x_0 x_1, y_2)$,
we see that the line $ u=0$ maps to
$ x  :=  x_1 / x_0  , y : = y_2 / x_0^2$, and using the inhomogeneous
coordinate
$v$ on the line $u=0$,  we obtain $ x = \frac{v-2}{v^3}$, $ y =
\frac{(v+2)^2}{v^4}
\geq 0$.

We get $y \geq 0$, $ y=0$ for $ v = \infty, v = -2$, and the corresponding
points of $B$ are: $(0,0)(1/2, 0 )$. 

Choosing $t$ as local coordinate at $ v = \infty$, we get $ x = t^2 (1-2t), y =
t^2 (2t + 1)^2$, whence at $(0,0)$ $B$ has an ordinary  cusp 
with non vertical
tangent.

The rational function $x$ only ramifies for $v= 0, 3, \infty$,
and for $ v=3$ we get the point $ x = \frac{1}{27}, y = \frac{25}{81} $.
For $v=0$ we must go to the other chart of our Segre-Hirzebruch surface
$\F_2$, and setting $x' = x_0 / x_1, y' = y_2 / x_1^2$, we get
the parametrization $ x' = \frac{v^3}{v-2}, y' = \frac{v^2 (v+2)^2}{(v-2)^2}$,
showing that the point $x' = y' = 0$ of $B$ is an ordinary   cusp with vertical
tangent.

Observe however that, since  on our surface $\tilde{Y}$ 
$$y = v^{-2}(u^2 - v^2)^{-1}
(u^2 - 1) ((2+v)^2 - 4 u^2), $$
and as $ u \ra \infty$ $ y \ra - \infty$, the image of  $\tilde{Y}(\R)$ 
(region of positivity for the double cover) is the one beneath the curve $B(\R)$.
Moreover, the isolated real double points of the branch curve $B(\R)$ are not
isolated points of the real Del Pezzo surface  $\tilde{Y}(\R)$, whence they lie in the
region of positivity.
Since we blew up $8$ real points in the plane, we have for the Euler
characteristic  (which we calculate through the $\Z / 2$ Betti numbers):
$e(\tilde{Y} (\R)) = 1-8= -7.$

After contracting the six $(-2)$-curves to the two $A_2^-$ singularities and the 
two
$A_1$-singularities, we obtain a Du Val Del Pezzo surface $Y$ 
 with $e(Y(\R)) = - 1.$

As already announced, we obtain a Du Val Del Pezzo surface $Z$
with two $A_2^-$ singularities using a small deformation on $Y$ which
realizes  the following local deformations: no local deformation at the 
$A_2^-$ singularities, and, at the
$A_1$ singularities, which are of the form $z^2 = a^2 + b^2$, for suitable
local coordinates $(a,b)$ on the quadric cone $Q$, a small deformation
of type  $z^2 = a^2 + b^2 - r^2$, where $r$ is a small real number.
The existence of this global deformation is  guaranteed by  \cite[Theorem
4.3]{CM08}; in practice, some simple calculations show that this
deformation is
obtained by simply moving the $8$ points on the four lines 
 $  L_+ , 
 L_-, L_1 ,  L_{-1}$ but breaking the symmetry $ u \mapsto -u$.

 This deformation produces on the one side a
real Du Val  Del Pezzo
$Z$ with
$e(Z(\R)) = - 3,$ on the other side it produces two real ovals 
in the branch curve
$B(\R)$, whose respective interiors are now in the region of negativity.

\begin{figure}[htbp]
    \centering
   \epsfysize=2.2cm
    \epsfbox{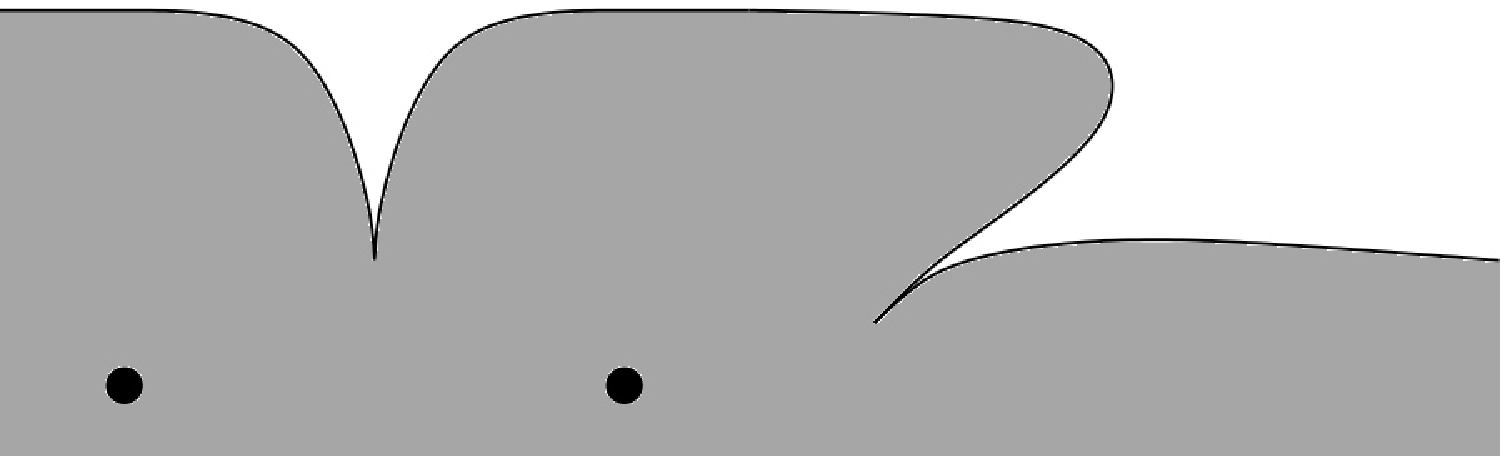}
\quad
 \epsfysize=2.2cm
    \epsfbox{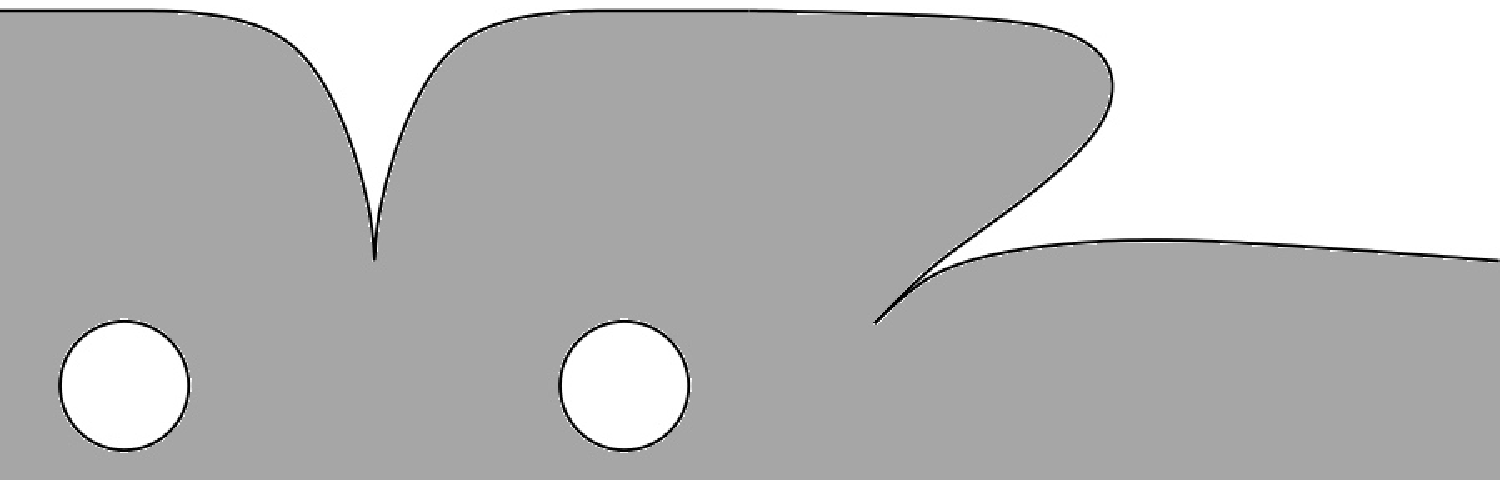}
\caption{}
    \label{fig:2cusps}
    
  \end{figure}

We define now $X$ as the same complex surface as $Z$, 
but with real structure
$\sigma \circ i$, $i$ being the Bertini involution. Now the region of
positivity and negativity are exchanged, since
 the old real function $z$ is replaced by $iz: = w$; hence, instead of having
$z^2 = f(x,y)$, we obtain $w^2 = - f(x,y)$. 

Clearly $X(\R)$ consists of a real projective plane with two $A_2^+$
singularities, together with two spheres.
The ($\R$-) minimality of $X$ (see e.g. \cite{ko-surf}) is a consequence of the following 

\begin{lem}
$X$ has real Picard number $\rho (X) = 1$.
\end{lem}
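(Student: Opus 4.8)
The plan is to compute $\rho(X)$ as the rank of the Galois-invariant part of $\operatorname{Pic}(X_\C)$, exploiting the fact that $X$ and $Z$ share the same underlying complex surface while their real structures differ exactly by the Bertini involution $i$. First I would observe that the real structure $\sigma$ of $Z$ acts trivially on $\operatorname{Pic}(Z_\C)$: indeed the minimal resolution $\pi\colon S'\to Z$ — which, as a complex surface, is also the minimal resolution of $X$ — is, by the plane model constructed above, the blow-up of $\PP^2_\R$ at eight real points, so $\operatorname{Gal}(\C/\R)$ acts trivially on $\operatorname{Pic}(S'_\C)$; since $\pi^*\colon \operatorname{Pic}(X_\C)_\Q\to\operatorname{Pic}(S'_\C)_\Q$ is injective and Galois-equivariant, the action on $\operatorname{Pic}(X_\C)_\Q$ is trivial as well, i.e., $\sigma^* = \operatorname{id}$.

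Next I would analyze the Bertini involution on the lattice $\operatorname{Pic}(X_\C)$. Since $\sigma' = \sigma\circ i$ and pullback reverses composition, $(\sigma')^* = i^*\circ\sigma^* = i^*$. The involution $i$ is the deck transformation of the bianticanonical double cover $\phi\colon X\to Q$, so for every divisor $D$ the class $D + i^*D$ is pulled back from $Q$; as $Q$ is the quadric cone $\PP(1,1,2)$, $\operatorname{Pic}(Q)_\Q$ has rank one and $\phi^*\operatorname{Pic}(Q)_\Q = \Q\cdot K_X$. Moreover $i^*$ fixes the canonical class, hence preserves $K_X^{\perp}$; since for $D\in K_X^{\perp}$ one has $D + i^*D\in K_X^{\perp}\cap\Q K_X = 0$ (because $K_X^2 = 1$), it follows that $i^* = -\operatorname{id}$ on $K_X^{\perp}$ and $i^* = \operatorname{id}$ on $\Q K_X$. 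Hence $(\sigma')^*$ has $(+1)$-eigenspace exactly $\Q\cdot K_X$ inside $\operatorname{Pic}(X_\C)_\Q$, which is one-dimensional because $K_X^2\neq 0$.

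Finally, since $X(\R)\neq\emptyset$ the rank of $\operatorname{Pic}(X)$ equals the rank of $\operatorname{Pic}(X_\C)^{\operatorname{Gal}}$, which by the above is $1$; as $\rho(X)\geq 1$ always, this yields $\rho(X) = 1$. The main obstacle is the first step: one must be sure that going from $Z$ to $X$ changes nothing but the real structure and that the eight blown-up points (and the relevant $(-2)$-curves) in the plane model are genuinely defined over $\R$, so that $\sigma$ acts trivially on $\operatorname{Pic}(X_\C)$. Once that, together with the normalization $K_X^2 = 1$, is in hand, the remainder is formal lattice algebra.
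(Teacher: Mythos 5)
Your argument is correct, but it takes a genuinely different route from the paper's. The paper computes $\rho(X)$ topologically: it passes to the minimal resolution $S$, notes $\rho(S)=\rho(X)+2$ because each real $A_2^+$ point is resolved by a pair of complex conjugate $(-2)$-curves, and then, arguing as in Lemma~\ref{lem:rho-euler}, evaluates $\rho(S)=b_1(S(\R))+\lambda(S)$ via the Comessatti characteristic, $2\lambda(S)=b_*(S(\C))-b_*(S(\R))=11-(2+2+3)=4$; this uses the explicit identification of $X(\R)$ as $\PP^2(\R)\sqcup 2\SSS$ and gives $\rho(S)=3$, hence $\rho(X)=1$. You instead work lattice-theoretically on the anticanonical model: $\sigma$ acts trivially on $\operatorname{Pic}(Z_\C)$ because the resolution is a blow-up of $\PP^2$ at eight real points (this is indeed guaranteed by the construction, since the deformation from $Y$ to $Z$ only moves the eight real points along the four real lines), and the Bertini involution acts as $+1$ on $\Q K_X$ and $-1$ on $K_X^\perp$, so the invariant lattice of the twisted structure is exactly $\Q K_X$. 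Both proofs are complete. Yours is more structural: it makes transparent that twisting by the Bertini involution forces $\rho=1$ whenever the untwisted Galois action is trivial, and it needs only $X(\R)\neq\emptyset$ rather than the full topology of the real locus; the paper's proof, on the other hand, reuses the Comessatti machinery already set up for the case analysis and the Betti numbers already computed. Two minor points you should state explicitly: that the Bertini involution commutes with $\sigma$, so that $\sigma'=\sigma\circ i$ is again an antiholomorphic involution and $(\sigma')^*=i^*\circ\sigma^*$; and that the norm argument for the double cover takes place in $\operatorname{Cl}(Q)_\Q$, which is still of rank one for the quadric cone. Both are immediate.
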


\Proof
Let $S$ be the minimal resolution of singularities of $X$,
and observe that $\rho (S) = \rho (X)  + 2$, since the blow up
of a real singular point of type $A_2^+$ yields a pair of complex
conjugate $(-2)$-curves.
Arguing as in Lemma \ref{lem:rho-euler} we calculate 
$$ \rho (S) =b_1 (S (\R))  + \lambda (S)= 1 + \lambda (S).$$
Here, by the definition of $\lambda$, 
$$ 2 \lambda (S) = b_*(S (\C)) -  b_*(S (\R)) = 11 - (2+2+3) = 4.$$

Whence $  \lambda (S)=2$, $ \rho (S) = 3$, $\rho (X) = 1$.
\QED for the Lemma.

The remaining assertions follow from lemma \ref{PTB}, 
and most of them were
already mentioned in the previous remark: 
  observe finally that a real projective plane with two points
of multiplicity $3$ is a orbifold of hyperbolic type, 
since $ 1 - \frac{2}{3} - \frac{2}{3} = - \frac{1}{3} < 0.$

\end{proof}

\vfill

\noindent {\bf Author's address:}

\bigskip

\noindent  Fabrizio Catanese\\ Lehrstuhl Mathematik VIII\\
Universit\"at Bayreuth,
NWII\\ D-95440 Bayreuth, Germany

e-mail: Fabrizio.Catanese@uni-bayreuth.de

\noindent Fr\'ed\'eric Mangolte\\ Laboratoire de Math\'ematiques\\
Universit\'e de
Savoie\\ 73376 Le Bourget du Lac Cedex, France

e-mail: Frederic.Mangolte@univ-savoie.fr

\end{document}